\theoremstyle{plain}
\newtheorem{theorem}{Theorem}[section]
\newtheorem{lemma}[theorem]{Lemma}
\theoremstyle{definition}
\newtheorem{definition}[theorem]{Definition}
\newtheorem{assumption}[theorem]{Assumption}
\theoremstyle{remark}
\icmltitlerunning{Accelerated Primal-Dual Gradient Method for Smooth and Convex-Concave Saddle-Point Problems with Bilinear Coupling}
\definecolor{bgcolor}{rgb}{0.50,0.90,0.50}
\DeclareMathOperator{\dom}{\mathrm{dom}}
\newcommand{\range}{\mathrm{range}}
\newcommand{\Ec}[2]{\mathbb{E}\left[#1\;\middle|\;#2\right]}
\newcommand{\lmax}{\lambda_{\max}}
\newcommand{\lmin}{\lambda_{\min}}
\newcommand{\lminp}{\lambda_{\min}^+}
\newcommand{\bg}{\mathrm{B}}
\newcommand{\R}{\mathbb{R}}
\def\<#1,#2>{\langle #1,#2\rangle}
\newcommand{\norm}[1]{\|#1\|}
\newcommand{\sqn}[1]{\norm{#1}^2}
\newcommand{\cS}{\mathcal{S}}
\newcommand{\cO}{\mathcal{O}}
\newcommand{\mA}{\mathbf{A}}
\newcommand{\mB}{\mathbf{B}}
\newcommand{\mC}{\mathbf{C}}
\newcommand{\mW}{\mathbf{W}}
\newcommand{\mI}{\mathbf{I}}
\begin{document}

\twocolumn[
\icmltitle{Accelerated Primal-Dual Gradient Method for Smooth and Convex-Concave Saddle-Point Problems with Bilinear Coupling}

% It is OKAY to include author information, even for blind
% submissions: the style file will automatically remove it for you
% unless you've provided the [accepted] option to the icml2022
% package.

% List of affiliations: The first argument should be a (short)
% identifier you will use later to specify author affiliations
% Academic affiliations should list Department, University, City, Region, Country
% Industry affiliations should list Company, City, Region, Country

% You can specify symbols, otherwise they are numbered in order.
% Ideally, you should not use this facility. Affiliations will be numbered
% in order of appearance and this is the preferred way.
\icmlsetsymbol{equal}{*}

\begin{icmlauthorlist}
\icmlauthor{Dmitry Kovalev}{kaust}
\icmlauthor{Alexander Gasnikov}{mipt}
\icmlauthor{Peter Richt\'{a}rik}{kaust}
\end{icmlauthorlist}

\icmlaffiliation{kaust}{King Abdullah University of Science and Technology, Thuwal, Saudi Arabia}
\icmlaffiliation{mipt}{Moscow Institute of Physics and Technology, Dolgoprudny, Russia}

\icmlcorrespondingauthor{Dmitry Kovalev}{dakovalev1@gmail.com}

% You may provide any keywords that you
% find helpful for describing your paper; these are used to populate
% the "keywords" metadata in the PDF but will not be shown in the document
\icmlkeywords{}

\vskip 0.3in
]

% this must go after the closing bracket ] following \twocolumn[ ...

% This command actually creates the footnote in the first column
% listing the affiliations and the copyright notice.
% The command takes one argument, which is text to display at the start of the footnote.
% The \icmlEqualContribution command is standard text for equal contribution.
% Remove it (just {}) if you do not need this facility.

\printAffiliationsAndNotice{}  % leave blank if no need to mention equal contribution
%\printAffiliationsAndNotice{\icmlEqualContribution} % otherwise use the standard text.

\begin{abstract}
	In this paper we study the convex-concave saddle-point problem $\min_x \max_y f(x) + y^T \mA x - g(y)$, where $f(x)$ and $g(y)$ are smooth and convex functions. We propose an Accelerated Primal-Dual Gradient Method (APDG) for solving this problem, achieving (i) an optimal linear convergence rate in the strongly-convex-strongly-concave regime, matching the lower complexity bound (Zhang et al., 2021), and (ii) an accelerated linear convergence rate in the case when only one of the functions $f(x)$ and $g(y)$ is strongly convex or even none of them are. Finally, we obtain a linearly convergent algorithm for the general smooth and convex-concave saddle point problem $\min_x \max_y F(x,y)$ without the requirement of strong convexity or strong concavity.
\end{abstract}

\section{Introduction}

In this paper we revisit the well studied smooth convex-concave saddle point problem with a bilinear coupling function, which takes the form
\begin{equation}\label{eq:main}
	\min_{x \in \R^{d_x}}\max_{y \in \R^{d_y}} F(x,y) = f(x) + y^\top\mA x - g(y),
\end{equation}
where $f(x)\colon\R^{d_x} \rightarrow \R$ and $g(y)\colon\R^{d_y} \rightarrow \R$ are smooth and convex functions, and $\mA \in \R^{d_y \times d_x}$ is a coupling matrix.

Problem \eqref{eq:main} has a large number of application, some of which we now briefly introduce.

\subsection{Empirical risk minimization} A classical application is the  regularized empirical risk minimization (ERM)  with linear predictors, which is a classical supervised learning problem. Given a data matrix $\mA = [a_1,\ldots,a_n]^\top \in \R^{n\times d}$, where $a_i\in \R^d$ is the feature vector of the $i$-th data entry, our goal is to find a solution of
\begin{equation}\label{eq:erm}
	\min_{x} f(x) + \ell(\mA x),
\end{equation}
where $f(x):\R^d \to \R$ is a convex regularizer, $\ell(y):\R^n \to \R$ is a convex loss function, and $x \in \R^d$ is a linear predictor.
Alterantively, one can solve the following equivalent saddle-point reformulation of problem \eqref{eq:erm}:
\begin{equation}
	\min_{x}\max_y  f(x) + y^\top \mA x - \ell^*(y).
\end{equation}
The saddle-point reformulation is often preferable. For example, when such a formulation admits a finite sum structure \citep{zhang2015stochastic, wang2017exploiting}, this may reduce the communication complexity in the distributed setting \citep{xiao2019dscovr}, and one may also better exploit the udnerlying sparsity structure \citep{lei2017doubly}.

\subsection{Reinforcement learning} In reinforcement learning (RL)  we are given a sequence $\{(s_t,a_t, r_t,s_{t+1})\}_{t=1}^n$ generated by a policy $\pi$, where $s_t$ is the state at time step $t$, $a_t$ is the action taken at time step $t$ by policy $\pi$ and $r_t$ is the reward after taking action $a_t$. A key step in many RL algorithms is to estimate the value function of a given policy $\pi$, which is defined as
\begin{equation}
	V^\pi(s) = \Ec{\sum_{t=0}^\infty \gamma^t r_t}{s_0 = s, \pi},
\end{equation}
where $\gamma \in (0,1)$ is a discount factor. A common approach to this problem is to use a linear approximation $V^\pi(s) = \phi(s)^\top x$, where $\phi(s)$ is a feature vector of a state $s$. The model parameter $x$ is often estimated by minimizing the mean squared projected Bellman error
\begin{equation}\label{eq:mspbe}
	\min_x \sqn{\mB x - b}_{\mC^{-1} },
\end{equation}
where $\mC  = \sum_{t=1}^n \phi(s_t)\phi(s_t)^\top$, $b= \sum_{t=1}^n r_t \phi(s_t)$ and $\mB = \mC - \gamma \sum_{t=1}^n \phi(s_t)\phi(s_{t+1})^\top$. One can observe that it is hard to apply gradient-based methods to problem \eqref{eq:mspbe} because this would require one to compute an inverse of the matrix $\mC$. In order to tackle this issue, one can solve an equivalent saddle-point reformulation proposed by \citet{du2017stochastic} instead. This reformulation is given by 
\begin{equation}
	\min_x\max_y   - 2y^\top \mB x - \sqn{y}_\mC + 2b^\top y,
\end{equation}
and is an instance of problem~\eqref{eq:main}.
Solving this reformulation with gradient methods does not require matrix inversion.

\subsection{Minimization under affine constraints}

%An important special case of the saddle-point problem \eqref{eq:main} is a convex minimization problem with affine constraints
Next,   consider the problem of convex minimization under affine constraints,
\begin{equation}\label{eq:lc}
	\min_{\mA x = b} f(x), 
\end{equation}
where $b \in \range \mA$. 
This problem covers a wide range of applications, including inverse problems in imaging \citep{chambolle2016introduction}, sketched learning-type applications \citep{keriven2018sketching}, network flow optimization \citep{zargham2013accelerated} and optimal transport \citep{peyre2019computational}.

Another important application of problem~\eqref{eq:lc} is decentralized distributed optimization
\citep{kovalev2020optimal,scaman2017optimal,li2020optimal,nedic2017achieving,arjevani2020ideal,ye2020multi}.
In this setting, the distributed minimization problem is often reformulated as 
\begin{equation}\label{eq:decentralized}
	\min_{\sqrt{\mW}(x_1,\ldots,x_n)^\top = 0} \left[f(x_1,\ldots,x_n) = \sum_{i=1}^n f_i(x_i)\right],
\end{equation}
where $f_i(x_i)$ is a function stored locally by a computing node $i \in \{1,\ldots,n\}$ and $\mW \in \R^{n\times n}$ is the Laplacian matrix of a graph representing the communication network. The constraint enforces  consensus among the nodes: $x_1=\ldots=x_n$.

One can observe that problem~\eqref{eq:lc} is equivalent to the saddle-point formulation
\begin{equation}\label{eq:lcspp}
	\min_x \max_y f(x) + y^\top\mA x - y^\top b,
\end{equation}
which is another instance of problem~\eqref{eq:main}. State-of-the-art methods  often focus on this formulation instead of directly solving~\eqref{eq:lc}. In particular, \citet{salim2021optimal} and \citet{kovalev2020optimal} obtained optimal algorithms for solving~\eqref{eq:lc} and~\eqref{eq:decentralized} using this saddle-point approach.

\subsection{Bilinear min-max problems}
Unconstrained bilinear saddle-point problems of the form 
\begin{equation}\label{eq:bilinear}
	\min_{x \in \R^{d_x}}\max_{y \in \R^{d_y}} a^\top x + y^\top\mA x - b^\top y
\end{equation}
are another special case of problem~\eqref{eq:main}, one where both $f(x)$ and $g(y)$ are linear functions.
While such  problems do not usually play an important role in practice,  they are often a good testing ground for theoretical purposes \citep{gidel2019negative,azizian2020accelerating, zhang2021don,mokhtari2020unified,daskalakis2018training,liang2019interaction}.

\begin{table}[ht]
	\caption{Comparison of method (APGD, \Cref{apd:alg}) with existing state-of-the-art algorithms for solving problem~\eqref{eq:main} in the 5 different cases described in \cref{sec:related}.}
	\vspace{1em}
	\label{tab:1}
	\centering
	\resizebox*{\linewidth}{!}{
		\begin{NiceTabular}{cc}[code-before = \rowcolor{bgcolor}{2,7,11,15,19}]
			\toprule
			\multicolumn{2}{c}{\bf Strongly-convex-strongly-concave case (\cref{sec:case:scsc})}\\
			\toprule
			\cellcolor{bgcolor} \Cref{apd:alg} &\cellcolor{bgcolor}  $\cO\left(\max\left\{\sqrt{\frac{L_x}{\mu_x}}, \sqrt{\frac{L_y}{\mu_y}},\frac{L_{xy}}{\sqrt{\mu_x\mu_y}}\right\} \log\frac{1}{\epsilon}\right)$
			\\\midrule
			\makecell{Lower bound\\\citet{zhang2021lower}} & $\cO\left(\max\left\{\sqrt{\frac{L_x}{\mu_x}}, \sqrt{\frac{L_y}{\mu_y}},\frac{L_{xy}}{\sqrt{\mu_x\mu_y}}\right\} \log\frac{1}{\epsilon}\right)$
			\\\midrule
			\makecell{DIPPA\\\citet{xie2021dippa}} & $\tilde\cO\left(\max\left\{\sqrt[4]{\frac{L_x^2L_y}{\mu_x^2\mu_y}},\sqrt[4]{\frac{L_xL_y^2}{\mu_x\mu_y^2}},\frac{L_{xy}}{\sqrt{\mu_x\mu_y}}\right\}\log\frac{1}{\epsilon}\right)$
			\\\midrule
			\makecell{Proximal Best Response\\\citet{wang2020improved}} & $\tilde\cO\left(\max\left\{\sqrt{\frac{L_x}{\mu_x}},\sqrt{\frac{L_y}{\mu_y}},\sqrt{\frac{L_{xy}L}{\mu_x\mu_y}}\right\}\log\frac{1}{\epsilon}\right)$
			\\
			\toprule
			\multicolumn{2}{c}{\bf Affinely constrained minimization case (\cref{sec:case:affine})}\\
			\toprule
			\cellcolor{bgcolor} \Cref{apd:alg} & \cellcolor{bgcolor} $\cO\left(
			\frac{L_{xy}}{\mu_{xy}}\sqrt{\frac{L_x}{\mu_x}}\log\frac{1}{\epsilon}\right)$
			\\\midrule
			\makecell{Lower bound\\\citet{salim2021optimal}} & $\cO\left(
			\frac{L_{xy}}{\mu_{xy}}\sqrt{\frac{L_x}{\mu_x}}\log\frac{1}{\epsilon}\right)$
			\\\midrule
			\makecell{OPAPC\\\citet{kovalev2020optimal}} & $\cO\left(
			\frac{L_{xy}}{\mu_{xy}}\sqrt{\frac{L_x}{\mu_x}}\log\frac{1}{\epsilon}\right)$
			\\
			\toprule
			\multicolumn{2}{c}{\bf Strongly-convex-concave case (\cref{sec:case:scc})}\\
			\toprule
			\cellcolor{bgcolor} \Cref{apd:alg} & \cellcolor{bgcolor} $\cO\left(\max\left\{
			\frac{\sqrt{L_xL_y}}{\mu_{xy}},
			\frac{L_{xy}}{\mu_{xy}}\sqrt{\frac{L_x}{\mu_x}},
			\frac{L_{xy}^2}{\mu_{xy}^2}
			\right\}\log\frac{1}{\epsilon}\right)$
			\\\midrule
			Lower bound & N/A
			\\\midrule
			\makecell{Alt-GDA\\\citet{zhang2021don}} & $\cO\left(\max\left\{
			\frac{L^2}{\mu_{xy}^2},
			\frac{L}{\mu_x}
			\right\}\log\frac{1}{\epsilon}\right)$
			\\
			\toprule
			\multicolumn{2}{c}{\bf Bilinear case (\cref{sec:case:bilinear})}\\
			\toprule
			\cellcolor{bgcolor} \Cref{apd:alg} & \cellcolor{bgcolor} $\cO\left(\frac{L_{xy}^2}{\mu_{xy}^2}\log\frac{1}{\epsilon}\right)$
			\\\midrule
			\makecell{Lower bound\\\citet{ibrahim2020linear}} & $\cO\left(\frac{L_{xy}}{\mu_{xy}}\log\frac{1}{\epsilon}\right)$
			\\\midrule
			\citet{azizian2020accelerating} & $\cO\left(\frac{L_{xy}}{\mu_{xy}}\log\frac{1}{\epsilon}\right)$
			\\
			\toprule
			\multicolumn{2}{c}{\bf Convex-concave case (\cref{sec:case:cc})}\\
			\toprule
		\cellcolor{bgcolor} 	\Cref{apd:alg} & \cellcolor{bgcolor}  $\cO\left(\max\left\{
			\frac{\sqrt{L_xL_y}L_{xy}}{\mu_{xy}^2},
			\frac{L_{xy}^2}{\mu_{xy}^2}
			\right\}\log\frac{1}{\epsilon}\right)$
			\\\midrule
			Lower bound & N/A
			\\\bottomrule
	\end{NiceTabular}}
\end{table}

\section{Literature Review and Contributions}

% \subsection{Linear convergence}

In this work we are interested in algorithms able to solve problem~\eqref{eq:main} with a linear iteration complexity. That is, we are interested in methods that can provably  find an $\epsilon$-accurate solution of problem~\eqref{eq:main} in a number of iterations proportional to $\log\frac{1}{\epsilon}$ (see \Cref{def:epsilon_accurate,def:iteration_complexity}). This is typically achieved when functions $f(x)$ and $g(x)$ are assumed to be strongly convex (see \Cref{def:strongly_convex}). An example of this is the celebrated extragradient method of \citet{korpelevich1976extragradient}.

Recent work has shown that linear iteration complexity can be  achieved also in the less restrictive case when only one of the functions $f(x)$ and $g(x)$ is strongly convex. This was first shown by \citet{du2019linear}, and later improved on by \citet{zhang2021don}. 

\begin{quote}\em However, and this is the starting point of our research, to the best of our knowledge, there are no algorithms with linear iteration complexity in the case when neither  $f(x)$ nor $g(x)$  is strongly convex.
\end{quote}

\subsection{Acceleration}

Loosely speaking, we say that an algorithm is {\em non-accelerated} if its iteration complexity is proportional to at least the first power of the condition numbers associated with the problem,  such as $\nicefrac{L_x}{\mu_x}$ and $\nicefrac{L_y}{\mu_y}$, where $L_x$ and $L_y$ are  smoothness constants, and $\mu_x$ and $\mu_y$ are  strong convexity constants (see \Cref{ass:f} and \Cref{ass:g}). In contrast, the iteration complexity of an {\em accelerated} algorithm is proportional to the square root of such condition numbers, e.g.,  $\sqrt{\nicefrac{L_x}{\mu_x}}$ and $\sqrt{\nicefrac{L_y}{\mu_y}}$.

There were several recent attempts to design accelerated algorithms for solving problem~\eqref{eq:main} \citep{xie2021dippa,wang2020improved,alkousa2020accelerated}. These attempts rely on {\em stacking multiple algorithms on top of each other}, and result in complicated methods. For example, \citet{lin2020near} use a non-accelerated algorithm as a sub-routine for the inexact accelerated proximal-point method. This approach allows them to obtain accelerated algorithms for solving problem~\eqref{eq:main} in a straightforward and tractable way. However, this approach has significant drawbacks: the algorithms obtained this way  have (i)  additional logarithmic factors in their iteration complexity, and (ii) a complex nested structure with the requirement to manually set inner loop sizes, which is a byproduct of the design process based on combining multiple algorithms. This drawback limits the performance of the resulting algorithms in theory, and requires additional fine tuning in practice.

A philosophically different approach to designing such algorithms---one that we adopt in this work---is to attempt to provide a {\em direct} acceleration of a suitable algorithm for solving problem~\eqref{eq:main}, similarly to what \citet{nesterov1983method} did for convex minimization problems. While this technically more demanding, algorithms obtained this way typically don't have the aforementioned drawbacks. Hence, we follow the latter approach in this work.

\subsection{Main contributions}

In this work we propose an Accelerated Primal-Dual Gradient Method (APDG; Algorithm~\ref{apd:alg}) for solving problem ~\eqref{eq:main} and provide a theoretical analysis of its convergence properties (Theorem~\ref{apd:thm}). In particular, we prove the following results.
\begin{itemize}
	\item[(i)] When both functions $f(x)$ and $g(y)$ are strongly convex, Algorithm~\ref{apd:alg} achieves the optimal linear convergence rate, matching the lower bound obtained by \citet{zhang2021lower}. To the best of our knowledge, Algorithm~\ref{apd:alg} is the first optimal algorithm in this regime.
	\item[(ii)] We establish  linear convergence of Algorithm~\ref{apd:alg} in the case when {\em only one} of the functions $f(x)$ or $g(y)$ is strongly convex, and $\mA$ is a full row or full column rank matrix, respectively. This improves upon the results provided by \citet{du2019linear,zhang2021don}.
	\item[(iii)]  We establish  linear convergence of the Algorithm~\ref{apd:alg} in the case when {\em neither} of the functions $f(x)$ nor $g(y)$ is strongly convex, and the matrix $\mA$ is square and full rank. To the best of our knowledge, Algorithm~\ref{apd:alg} is the first algorithm achieving linear convergence in this setting.
\end{itemize}
\Cref{tab:1} provides a brief comparison of the complexity of \Cref{apd:alg} (\Cref{apd:thm}) with the current state of the art. Please refer to \cref{sec:related} for a detailed discussion of this result and comparison with  related work.

\subsection{General min-max problem and additional contributions}
In our work we also consider the saddle-point problem 
\begin{equation}\label{eq:main2}
	\min_{x\in\R^{d_x}} \max_{y \in \R^{d_y}} F(x,y),
\end{equation}
where $F(x,y)\colon \R^{d_x}\times \R^{d_y}\rightarrow\R$ is a smooth function, which is convex in $x$ and concave in $y$. One can observe that the main problem~\eqref{eq:main} is a special case of this more general problem~\eqref{eq:main2}.

As an additional contribution, we propose a Gradient Descent-Ascent Method with Extrapolation  (GDAE; \Cref{gda:alg}) for solving the general convex-concave saddle-point problem~\eqref{eq:main2}, and provide a theoretical analysis of its convergence properties (Theorem~\ref{gda:thm}). 
\begin{enumerate}
	\item[(i)] When the function $F(x,y)$ is  strongly convex in $x$ and strongly concave in $y$, Algorithm~\ref{gda:alg} achieves a linear convergence rate, which recovers the convergence result of \citet{cohen2020relative}.
	\item[(ii)] Under  certain assumptions on the way the variables $x$ and $y$ are coupled by the function $F(x,y)$, we establish  linear convergence of Algorithm~\ref{gda:alg} in the case when the function $F(x,y)$ is strongly-convex-concave, convex-strongly-concave, or even just convex-concave. To the best of our knowledge, Algorithm~\ref{gda:alg} is the first algorithm achieving linear convergence under such assumptions.
\end{enumerate}
Please refer to \cref{sec:related2} for discussion of  \Cref{gda:thm} and related work.

\section{Basic Definitions and Assumptions}\label{sec:def_ass}

We start by formalizing the notions of smoothness and strong convexity of a function.
\begin{definition}\label{def:strongly_convex}
	Function $h(z) \colon \R^d \rightarrow \R$ is $L$-smooth and $\mu$-strongly convex for $L \geq \mu \geq 0$, if for all $z_1,z_2 \in  \R^d$ the following inequality holds:
	\begin{equation}
		\frac{\mu}{2}\sqn{z_1 - z_2} \leq \bg_h (z_1,z_2) \leq \frac{L}{2}\sqn{z_1 - z_2}.
	\end{equation}
	Above, $\bg_h(z_1,z_2) = h(z_1) - h(z_2) - \<\nabla h(z_2),z_1-z_2>$ is the Bregman divergence associated with the function $h(z)$.
\end{definition}

We are now ready to state the main assumptions that we impose on problem~\eqref{eq:main}. We start with  Assumptions~\ref{ass:f} and~\ref{ass:g} that formalize the strong-convexity and smoothness properties of functions $f(x)$ and $g(y)$.

\begin{assumption}\label{ass:f}
	Function $f(x)$ is $L_x$-smooth and $\mu_x$-strongly convex for $L_x\geq\mu_x\geq 0$.
\end{assumption}

\begin{assumption}\label{ass:g}
	Function $g(y)$ is $L_y$-smooth and $\mu_y$-strongly convex for $L_y\geq\mu_y\geq 0$.
\end{assumption}

Note, that $\mu_x$ and $\mu_y$ are allowed to be zero. That is, both $f(x)$ and $g(y)$ are allowed to be non-strongly convex.

The following assumption formalizes the spectral properties of matrix $\mA$.

\begin{assumption}\label{ass:A}
	There exist constants $L_{xy}> \mu_{xy},\mu_{yx} \geq 0$ such that
	\begin{equation*}%\label{eq:eigvals}
		\begin{split}
			\mu_{xy}^2&\leq \begin{cases}
				\lminp(\mA\mA^\top)  & \nabla g(y) \in \range\mA \;\text{ for all } y \in \R^{d_y}\\
				\lmin(\mA\mA^\top) &\text{otherwise}
			\end{cases}\\
			\mu_{yx}^2& \leq \begin{cases}
				\lminp(\mA^\top\mA)  &\nabla f(x) \in \range \mA^\top \text{ for all } x \in \R^{d_x} \\
				\lmin(\mA^\top\mA) &\text{otherwise}
			\end{cases}\\
			L_{xy}^2& \geq  \lmax(\mA^\top\mA) = \lmax(\mA\mA^\top),
		\end{split}
	\end{equation*}
	where $\lmin(\cdot)$, $\lminp(\cdot)$ and $\lmax(\cdot)$ denote the smallest,  smallest positive and  largest eigenvalue of a matrix, respectively, and $\range$ denotes the range space of a matrix.
\end{assumption}

By $\cS \subset \R^{d_x} \times \R^{d_y}$ we  denote the solution set of problem~\eqref{eq:main}. Note that $(x^*,y^*) \in \cS$ if and only if  $(x^*,y^*)$ satisfies the first-order optimality conditions
\begin{equation}\label{eq:opt}
	\begin{cases}
		\nabla_x F(x^*,y^*) = \nabla f(x^*) + \mA^\top y^* = 0,\\
		\nabla_y F(x^*,y^*) = -\nabla g(y^*) + \mA x^* = 0.
	\end{cases}
\end{equation}

Our main goal is to propose an algorithm for finding a solution to problem~\eqref{eq:main}. Numerical iterative algorithms typically find an approximate solution of a given problem. We formalize this through the following definition.

\begin{definition}\label{def:epsilon_accurate}
	Let the solution set $\cS$ be nonempty. We call a pair of vectors $(x,y) \in \R^{d_x} \times \R^{d_y}$ an $\epsilon$-accurate solution of problem~\eqref{eq:main} for a given accuracy $\epsilon>0$ if it satisfies
	\begin{equation}
		\min_{(x^*,y^*) \in \cS}\max\left\{\sqn{x-x^*},\sqn{y - y^*}\right\} \leq \epsilon.
	\end{equation}
\end{definition}

We also want to propose an {\em efficient} algorithm for solving problem~\eqref{eq:main}. That is, we want to propose an algorithm with the the lowest possible {\em iteration complexity}, which we define next.

\begin{definition}\label{def:iteration_complexity}
	The iteration complexity of an algorithm for solving problem~\eqref{eq:main} is the number of iterations  the algorithm requires to find an $\epsilon$-accurate solution of this problem. At each iteration the algorithm is allowed to perform $\cO(1)$ computations of the gradients $\nabla f(x)$ and $\nabla g(y)$ and  matrix-vector multiplications with matrices $\mA$ and $\mA^\top$.
	
\end{definition}

%Finally, we assume that $\mu_x,\mu_y,\mu_{xy},\mu_{yx}$ satisfy the following condition:
%\begin{equation}\label{eq:mu}
%\min\left\{\max\left\{\mu_x, \mu_{xy}\right\},\max\left\{\mu_y, \mu_{yx}\right\} \right\} > 0.
%\end{equation}
%We focus on this condition because of the following lemma.
%\begin{lemma}
%	Let Assumptions~\ref{ass:f},~\ref{ass:g},~\ref{ass:A} and condition~\eqref{eq:mu} hold. 
%\end{lemma}

\section{Accelerated Primal-Dual Gradient Method}

\begin{algorithm*}
	\caption{APDG: Accelerated Primal-Dual Gradient Method}
	\label{apd:alg}
	\begin{algorithmic}[1]
		\State {\bf Input:} $x^0 \in \range \mA^\top, y^0\in \range \mA,$ $\eta_x,\eta_y,\alpha_x,\alpha_y, \beta_x,\beta_y>0$, $\tau_x,\tau_y,\sigma_x,\sigma_y \in (0,1]$, $\theta\in(0,1)$
		\State $x_f^0 = x^0$
		\State $y_f^0 = y^{-1} = y^0$
		\For {$k=0,1,2,\ldots$}
		\State $y_m^{k} = y^{k} + \theta(y^{k} - y^{k-1})$\label{apd:line:y:m}
		\State $x_g^k = \tau_x x^k + (1-\tau_x)x_f^k$\label{apd:line:x:1}
		\State $y_g^k = \tau_y y^k + (1-\tau_y)y_f^k$
		\State  $x^{k+1} = x^k + \eta_x\alpha_x(x_g^k - x^{k}) - \eta_x\beta_x\mA^\top(\mA x^k - \nabla g(y_g^k)) - \eta _x\left(\nabla f(x_g^k) + \mA^\top y_m^{k}\right)$\label{apd:line:x:2}
		\State $y^{k+1} = y^k + \eta_y\alpha_y(y_g^k - y^k) - \eta_y\beta_y\mA(\mA^\top y^k + \nabla f(x_g^k)) - 
		\eta_y (\nabla g(y_g^k)- \mA x^{k+1})$
		\State $x_f^{k+1} = x_g^k + \sigma_x(x^{k+1} - x^k)$\label{apd:line:x:3}
		\State $y_f^{k+1} = y_g^k + \sigma_y(y^{k+1} - y^k)$
		\EndFor
	\end{algorithmic}
\end{algorithm*}

In this section we present the Accelerated Primal-Dual Gradient Method (APDG; Algorithm~\ref{apd:alg}) for solving problem~\eqref{eq:main}. First, we prove an outline of the key ideas used in the development of this algorithm.

\subsection{Algorithm development strategy}\label{sec:development}

First, we observe that problem $\eqref{eq:main}$ is equivalent to the problem of finding a zero of a sum of two monotone operators, $G_1,G_2\colon \R^{d_x} \times \R^{d_y} \rightarrow \R^{d_x} \times \R^{d_y}$, defined as
\begin{align}
	G_1\colon (x,y) &\mapsto (\nabla f(x), \nabla g(y)),\\
	G_2\colon (x,y) &\mapsto (\mA^\top y, -\mA x).
\end{align}
Indeed, $G_1(x^*,y^*) + G_2(x^*,y^*) = 0$ is just another way to write the optimality conditions \eqref{eq:opt}.

\paragraph{The Forward Backward algorithm.}  A natural way to tackle this problem is via {\em Forward Backward algorithm} \citep{Convex-Analysis-and-Monotone-Operator-Theory}, the iterates of which have the form
\begin{equation}\label{apd:eq:FB}
	(x^{k+1},y^{k+1}) = J_{G_2}\left((x^k, y^k) - G_1(x^k,y^k)\right),
\end{equation}
where the operator $J_{G_2}$ is the inverse of the operator $I + G_2$, and $I$ is the identity operator. Note that $J_{G_2}$ can be written as $J_{G_2}\colon (x,y) \mapsto (x^+,y^+)$, where $(x^+,y^+)\in\R^{d_x} \times \R^{d_y}$ is a solution of the linear system
\begin{equation}\label{apd:eq:resolvent}
	\begin{cases}
		x^+ = x - \mA^\top y^+\\
		y^+ = y + \mA x^+
	\end{cases}.
\end{equation}

\paragraph{Linear extrapolation step.}
Next, notice that the computation of operator $J_{G_2}$ requires solving the linear system \eqref{apd:eq:resolvent}. This is expensive\footnote{The solution of \eqref{apd:eq:resolvent} can be written in  a closed form and requires to compute an inverse matrix $(\mI + \mA^\top \mA)^{-1}$ or $(\mI + \mA \mA^\top)^{-1}$, where $\mI$ is the identity matrix of an appropriate size.} and has to be done at each iteration of the Forward Backward algorithm.
%Indeed, computing the solution of  \eqref{apd:eq:resolvent} requires to calculate can be written in a closed form
%\begin{equation}
%	\begin{cases}
	%	x^+ = (\mI_{d_x} + \mA^\top \mA)^{-1}(x - \mA^\top y)\\
	%	y^+ = (\mI_{d_y}  + \mA\mA^\top)^{-1}(y + \mA x)
	%	\end{cases},
%\end{equation}
%where $\mI_p$ is $p\times p$ identity matrix. 
Let us instead consider the related problem
\begin{equation}\label{apd:eq:resolvent2}
	\begin{cases}
		x^+ = x - \mA^\top y_m\\
		y^+ = y + \mA x^+
	\end{cases},
\end{equation}
where  $y_m \in \R^{d_y}$ is a newly introduced variable. It's easy to observe that \eqref{apd:eq:resolvent2} is equivalent to \eqref{apd:eq:resolvent} when $y_m = y^+$. Next, notice that choosing $y_m = y$ makes \eqref{apd:eq:resolvent2} easy to solve. However, it turns out that the convergence analysis of an algorithm with this approximation may be challenging \citep{zhang2021don}, especially if we want to combine it with other techniques, such as acceleration. 
%We propose to use \eqref{apd:eq:resolvent2} with a different choice of $y_m$ as an approximation of the  linear system \eqref{apd:eq:resolvent} in the Forward Backward algorithm.
Our key  idea is to propose a better alternative: the {\em linear extrapolation step}
\begin{equation}
	y_m = y + \theta(y - y^-),
\end{equation}
where $y^- \in \R^{d_y}$ corresponds to $y$ obtained from the previous iteration of the Forward Backward algorithm, and $\theta \in (0,1]$ is an extrapolation parameter. The linear extrapolation  step was introduced by \citet{chambolle2011first} in the analysis of the Primal-Dual Hybrid Gradient algorithm\footnote{However, the Primal-Dual Hybrid Gradient algorithm is not applicable in our case since it requires to compute the proximal operator of $f(x)$ and $g(y)$ at each iteration. Moreover, \citet{chambolle2011first} established  linear convergence of this algorithm in the strongly-convex-strongly-concave setting only.}.

\paragraph{Nesterov acceleration.} Next, we note that operator $G_1$  is equal to the gradient of the (potential) function $(x,y) \mapsto f(x) + g(y)$ function. This function is smooth and convex due to Assumptions~\ref{ass:f} and~\ref{ass:g}. This allows us to incorporate the {\em Nesterov acceleration} mechanism in the Forward Backward algorithm. Nesterov acceleration is known to be a powerful tool which allows to improve convergence properties of gradient methods \citep{nesterov1983method,nesterov2003introductory}.

\subsection{Convergence of the algorithm}

%Armed with these ideas we can formalize the Accelerated Primal-Dual Gradient Method as Algorithm~\ref{apd:alg}.
%We ar
We are now ready to study the convergence properties of  Algorithm~\ref{apd:alg}. We are interested in the case when the following condition holds:
%\begin{equation}
%\begin{cases}
%\max\left\{\mu_x, \mu_{xy}\right\} > 0\\
%\max\left\{\mu_y, \mu_{yx}\right\} > 0
%\end{cases}.
%\end{equation}
\begin{equation}\label{eq:mu}
	\min\left\{\max\left\{\mu_x, \mu_{yx}\right\},\max\left\{\mu_y, \mu_{xy}\right\} \right\} > 0.
\end{equation}
In this case one can show that the solution set $\cS$ of problem~\eqref{eq:main} is nonempty. Moreover,  {\em strong duality} holds in this case, as captured by the following lemma.
\begin{lemma}\label{lem}
	Let Assumptions~\ref{ass:f},~\ref{ass:g} and~\ref{ass:A} and condition \eqref{eq:mu} hold. Let $p$ be the optimal value of the primal problem
	\begin{equation}\label{eq:primal}
		p = \min_{x\in\R^{d_x}} \left[P(x) = f(x)  + g^*(\mA x)\right],
	\end{equation}
and	let $d$ be the optimal value of the dual problem
	\begin{equation}\label{eq:dual}
		d = \max_{y\in\R^{d_y}} \left[D(y) = -g(y) - f^*(-\mA^\top y)\right].
	\end{equation}
	Then $p=d$ is finite and $(x^*,y^*) \in \cS$
	%	 is a solution to the saddle point problem~\eqref{eq:main}
	if and only if $x^*$ is a solution of the primal problem~\eqref{eq:primal} and $y^*$ is a solution of the dual problem~\eqref{eq:dual}.
\end{lemma}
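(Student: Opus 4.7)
The plan is to establish the three claims---finiteness and equality of $p$ and $d$, attainment of both optima, and the saddle-point equivalence---by combining coercivity arguments enabled by condition \eqref{eq:mu} with Fenchel--Rockafellar duality applied to the composite problem \eqref{eq:primal}.

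First, I would extract a universal lower bound on the conjugates from the smoothness hypotheses. The descent inequality $g(y) \leq g(0) + \langle \nabla g(0), y\rangle + (L_y/2)\|y\|^2$ yields, by Fenchel conjugation, the quadratic lower bound $g^*(z) \geq -g(0) + (2L_y)^{-1}\|z - \nabla g(0)\|^2$, so $g^*$ is coercive on $\R^{d_y}$ when $L_y > 0$ (the degenerate case $L_y = 0$ forces $g$ to be affine and can be treated directly). A symmetric bound applies to $f^*$.

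Next, I would establish attainment of the primal minimum using the first half of condition \eqref{eq:mu}, namely $\max\{\mu_x,\mu_{yx}\} > 0$. If $\mu_x > 0$, then $f$ is strongly convex hence coercive, while $g^*(\mA x)$ is bounded below by the previous step, so $P$ is coercive. If $\mu_{yx} > 0$, Assumption~\ref{ass:A} gives $\|\mA x\|^2 \geq \mu_{yx}^2\|x_\parallel\|^2$, where $x_\parallel$ denotes the projection of $x$ onto $\range \mA^\top$, so $g^*(\mA x) \to \infty$ as $\|x_\parallel\| \to \infty$. The range-space conditional in Assumption~\ref{ass:A} guarantees either $\ker \mA = \{0\}$ (in the unconditional $\lmin$-case) or that $f$ is constant along $\ker \mA$ (in the $\lminp$-case, where $\nabla f \in \range \mA^\top$), so the sublevel sets of $P$ are bounded modulo a trivial invariance and the infimum is attained. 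A fully symmetric argument under $\max\{\mu_y,\mu_{xy}\} > 0$ gives attainment of the dual supremum.

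With both optima attained, I would invoke Fenchel--Rockafellar duality on $P(x) = f(x) + g^*(\mA x)$. Since $\dom f = \R^{d_x}$, the classical constraint qualification reduces to $\mA(\R^{d_x}) \cap \mathrm{ri}(\dom g^*) \neq \emptyset$, which can be verified at the attained primal minimizer $x^*$ using the smoothness of $g$. This yields $p = d$ finite. For the saddle-point equivalence, the first-order optimality conditions \eqref{eq:opt} are equivalent, via the identifications $y^* = \nabla g^*(\mA x^*)$ and $-\mA^\top y^* = \nabla f(x^*)$, to $0 \in \partial P(x^*)$ and $0 \in \partial(-D)(y^*)$, i.e., to primal-dual optimality.

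The main obstacle I expect is verifying the Fenchel--Rockafellar qualification cleanly in the case $\mu_y = 0$, since then $\dom g^*$ can be strictly smaller than $\R^{d_y}$ and it is not automatic that $\mA x^* \in \mathrm{ri}(\dom g^*)$. Resolving this likely requires invoking the range-space conditional of Assumption~\ref{ass:A} in its full strength, since exactly in that sub-case the quadratic lower bound on $g^*$ only guarantees coercivity on $\range \mA$ rather than on all of $\R^{d_y}$.
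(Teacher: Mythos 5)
Your proposal reaches the same conclusion but by a genuinely different route. The paper's proof is direct and constructive: it first shows the primal minimum is attained (case split on $\mu_x>0$ vs.\ $\mu_{yx}>0$, using in the latter case that $\nabla f(x)\in\range\mA^\top$ makes $P$ invariant along $\ker\mA$ and strongly convex on $x^0+\range\mA^\top$), then explicitly manufactures a dual variable $y^*$ from the primal optimum $x^*$ by arguing $-\nabla f(x^*)\in(\ker\mA)^\perp=\range\mA^\top$ and checking $y^*\in\partial g^*(\mA x^*)$, and finally infers $p=d$ as an immediate corollary of having exhibited a saddle point. You instead prove attainment of \emph{both} optima via coercivity (using the quadratic lower bound on conjugates implied by smoothness) and then invoke Fenchel--Rockafellar duality as a black box, together with a constraint qualification, to get $p=d$ and the primal--dual characterization of $\cS$. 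The obstacle you flag is resolvable exactly along the lines you suggest: when $\mu_y=0$, condition~\eqref{eq:mu} forces $\mu_{xy}>0$, so Assumption~\ref{ass:A} gives $\nabla g(y)\in\range\mA$ for all $y$ and hence $\dom g^*\subset\range\mA$; consequently $\dom g^*-\mA\,\R^{d_x}=\range\mA$ (so $0$ lies in its relative interior) and $\mathrm{ri}(\dom g^*)\subset\range\mA$, so both the Fenchel--Rockafellar and subdifferential chain-rule qualifications hold automatically. Also, your identification ``$y^*=\nabla g^*(\mA x^*)$'' presupposes differentiability of $g^*$, which fails unless $g$ is strictly convex; this should be the subdifferential inclusion $y^*\in\partial g^*(\mA x^*)$, equivalently $\nabla g(y^*)=\mA x^*$, which is what the paper uses. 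In short, your route reuses textbook duality at the cost of CQ bookkeeping and establishes dual attainment symmetrically; the paper's route is longer but fully self-contained and builds $y^*$ by hand, so no external duality theorem is needed.
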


%\hl{TO CHECK AND TO PROVE.}

Under the aforementioned conditions, Algorithm~\ref{apd:alg} achieves  linear convergence. That is, the iteration complexity is proportional to $\log \frac{1}{\epsilon}$.

\begin{theorem}\label{apd:thm}
	Let Assumptions~\ref{ass:f},~\ref{ass:g} and~\ref{ass:A} and condition \eqref{eq:mu} hold. Then
	there exist parameters of Algorithm~\ref{apd:alg} such that its iteration complexity for finding an $\epsilon$-accurate solution of problem~\eqref{eq:main} is 
	\begin{equation}
		\cO\left(\min\left\{T_a,T_b, T_c,T_d\right\} \log \frac{C}{\epsilon}\right),
	\end{equation}
	where  $T_a,T_b, T_c,T_d$ are defined as
	\begin{align*}
		T_a &= \max\left\{\sqrt{\frac{L_x}{\mu_x}}, \sqrt{\frac{L_y}{\mu_y}},\frac{L_{xy}}{\sqrt{\mu_x\mu_y}}\right\},\\
		T_b &=\max\left\{
		\frac{\sqrt{L_xL_y}}{\mu_{xy}},
		\frac{L_{xy}}{\mu_{xy}}\sqrt{\frac{L_x}{\mu_x}},
		\frac{L_{xy}^2}{\mu_{xy}^2}
		\right\},\\
		T_c &= \max\left\{
		\frac{\sqrt{L_xL_y}}{\mu_{yx}},
		\frac{L_{xy}}{\mu_{yx}}\sqrt{\frac{L_y}{\mu_y}},
		\frac{L_{xy}^2}{\mu_{yx}^2}
		\right\},\\
		T_d &= \max\left\{
		\frac{\sqrt{L_xL_y}L_{xy}}{\mu_{xy}\mu_{yx}},
		\frac{L_{xy}^2}{\mu_{yx}^2},
		\frac{L_{xy}^2}{\mu_{xy}^2}
		\right\},
	\end{align*}
	and $C >0$ is some constant, which does not depend on $\epsilon$, but possibly depends on $L_x,\mu_x, L_y,\mu_y, L_{xy}, \mu_{xy}, \mu_{yx}$.
\end{theorem}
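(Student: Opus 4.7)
The plan is to establish linear contraction of a carefully chosen Lyapunov function and then specialise the parameters of Algorithm~\ref{apd:alg} to each of the four regimes $T_a,T_b,T_c,T_d$. The proof has a unified backbone---one master descent inequality per iteration---on top of which four separate parameter tunings are layered.

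The first ingredient is the per-step descent. I would rewrite the update for $x^{k+1}$ in line~\ref{apd:line:x:2} as the unique minimiser of a quadratic in $x$ and apply the three-point identity to obtain a descent inequality of the form $\tfrac{1}{2\eta_x}(\sqn{x^k-x^*}-\sqn{x^{k+1}-x^*}-\sqn{x^{k+1}-x^k})\geq\Dotprod{G_x^k,\,x^{k+1}-x^*}$, where $G_x^k$ denotes the bracketed update direction in line~\ref{apd:line:x:2}, and an analogous inequality for $y^{k+1}$. Using Assumption~\ref{ass:f} I would then bound $\Dotprod{\nabla f(x_g^k),x^{k+1}-x^*}$ via the Bregman divergences $\bg_f(x^*,x_g^k)$ and $\bg_f(x_g^k,x_f^{k+1})$, plus the strong-convexity penalty $\tfrac{\mu_x}{2}\sqn{x^*-x_g^k}$; combining this with the momentum update $x_f^{k+1}=x_g^k+\sigma_x(x^{k+1}-x^k)$ and the averaging $x_g^k=\tau_x x^k+(1-\tau_x)x_f^k$ converts $\sqn{x^{k+1}-x^k}$ into a reduction $\bg_f(x^*,x_f^k)\mapsto\bg_f(x^*,x_f^{k+1})$; an identical argument applies to $g$ on the $y$-side.

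The crucial bookkeeping is handling the bilinear and preconditioner terms. I would split $\Dotprod{\mA^\top y_m^k,x^{k+1}-x^*}$ using $y_m^k=y^k+\theta(y^k-y^{k-1})$ so that the anti-symmetric pair with its dual counterpart cancels the Lagrangian cross-term and leaves a telescoping residual $\theta\,\Dotprod{\mA^\top(y^k-y^{k-1}),x^{k+1}-x^k}$ absorbable into a Lyapunov term $\sqn{y^k-y^{k-1}}$ via Young's inequality. The preconditioner terms $-\beta_x\mA^\top(\mA x^k-\nabla g(y_g^k))$ and $-\beta_y\mA(\mA^\top y^k+\nabla f(x_g^k))$ are expanded by substituting the optimality conditions $\mA x^*=\nabla g(y^*)$ and $\mA^\top y^*=-\nabla f(x^*)$, producing positive quadratics bounded below by $\beta_x\mu_{xy}^2\sqn{y^k-y^*}$ and $\beta_y\mu_{yx}^2\sqn{x^k-x^*}$ by Assumption~\ref{ass:A}, provided the invariants $x^k-x^*\in\range\mA^\top$ and $y^k-y^*\in\range\mA$ are maintained inductively from the initialisation. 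With nonnegative constants $A,B,C,D,E,F$ I then define
\[
\begin{split}
\Psi^k \eqdef{} & A\sqn{x^k-x^*}+B\sqn{y^k-y^*}+C\,\bg_f(x^*,x_f^k)\\
& +D\,\bg_g(y^*,y_f^k)+E\sqn{y^k-y^{k-1}}+F\sqn{x^k-x^{k-1}},
\end{split}
\]
and summing the inequalities above yields $\Psi^{k+1}\leq(1-\rho)\Psi^k$ for a contraction $\rho$ expressible as a minimum of quantities built from $\eta_x\mu_x$, $\eta_y\mu_y$, $\eta_x\beta_y\mu_{yx}^2$, $\eta_y\beta_x\mu_{xy}^2$, and $\theta$.

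Finally, for each regime I would tune the parameters to maximise $\rho$: regime $T_a$ with $\beta_x=\beta_y=0$ and classical accelerated-gradient scalings relying on $\mu_x,\mu_y>0$; regime $T_b$ with $\beta_x>0$ so that $\beta_x\mu_{xy}^2\sqn{y^k-y^*}$ substitutes for the missing $\mu_y$, and symmetrically $T_c$; and regime $T_d$ with both $\beta_x,\beta_y>0$ and the contraction built purely from $\mu_{xy},\mu_{yx}$. Translating $\Psi^k\leq(1-\rho)^k\Psi^0$ into the Definition~\ref{def:epsilon_accurate} iterate bound, and invoking Lemma~\ref{lem} for existence of the primal-dual pair, then produces the explicit constant $C$. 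The main obstacle will be the bookkeeping of the third step together with the Lyapunov calibration of the fourth: manufacturing enough coercivity from the preconditioner and extrapolation terms to keep $\rho$ strictly positive even when $\mu_x=\mu_y=0$, while keeping the Young-type splittings loose enough that $A,\dots,F$ are simultaneously nonnegative across all four regimes. Once the master contraction is in place, the per-regime parameter tuning becomes a mechanical but lengthy optimisation matching the rates $T_a,T_b,T_c,T_d$.
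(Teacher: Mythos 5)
Your overall strategy---per-step descent inequalities for $x$ and $y$, a Lyapunov function combining distance-to-optimum, Bregman, and momentum-residual terms, followed by a four-way parameter tuning---is indeed the paper's approach. But there is a genuine gap in how you propose to handle the extrapolation residual, and consequently in the Lyapunov function itself. You claim that after the anti-symmetric cancellation the only leftover is $\theta\,\langle\mA^\top(y^k-y^{k-1}),\,x^{k+1}-x^k\rangle$, absorbable by Young's inequality into an $E\sqn{y^k-y^{k-1}}$ term. What actually survives is $-\theta\,\langle\mA(x^{k+1}-x^*),\,y^k-y^{k-1}\rangle$, which must be split as $-\theta\,\langle\mA(x^{k}-x^*),\,y^k-y^{k-1}\rangle-\theta\,\langle\mA(x^{k+1}-x^{k}),\,y^k-y^{k-1}\rangle$; only the second piece is Young-absorbable, while the first piece does \emph{not} vanish and cannot be pushed into $\sqn{x^k-x^*}$ without destroying the contraction factor. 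The paper handles it by including the signed cross term $-2\langle y^k-y^{k-1},\,\mA(x^k-x^*)\rangle$ in $\Psi^k$, so that this piece telescopes exactly. Your proposed Lyapunov $A\sqn{x^k-x^*}+\dots+E\sqn{y^k-y^{k-1}}+F\sqn{x^k-x^{k-1}}$ has no such cross term (and an $x$-difference term that never arises, since only the $y$-variable is extrapolated), so the master contraction $\Psi^{k+1}\leq\theta\Psi^k$ would not close as written.

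Two smaller slips worth flagging, though they would likely correct themselves once you carry out the algebra. First, the coercivity produced by the preconditioners is in the \emph{own} variable, not the dual one: $-\beta_x\mA^\top(\mA x^k-\nabla g(y_g^k))$ yields, via the parallelogram law and $\mA x^*=\nabla g(y^*)$, a term $-\beta_x\sqn{\mA(x^k-x^*)}\leq-\beta_x\mu_{yx}^2\sqn{x^k-x^*}$; symmetrically, $\beta_y$ delivers $-\beta_y\mu_{xy}^2\sqn{y^k-y^*}$. You have the roles of $\beta_x$ and $\beta_y$ swapped, which propagates into your regime assignment (the $y$-side substitute for a missing $\mu_y$ is $\beta_y\mu_{xy}^2$, not $\beta_x\mu_{xy}^2$). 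Second, the paper's Lyapunov uses $\bg_f(x_f^k,x^*)$ and $\bg_g(y_f^k,y^*)$---Bregman distance \emph{from} the iterate \emph{to} the anchor $x^*$, needed so that the $L_x$-smoothness of $\bg_f(\cdot,x^*)$ converts the momentum step $x_f^{k+1}=x_g^k+\sigma_x(x^{k+1}-x^k)$ into a $\sqn{x^{k+1}-x^k}$ penalty; your $\bg_f(x^*,x_f^k)$ has the arguments reversed and would not support that same smoothness upper bound.
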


\section{Discussion of Theorem~\ref{apd:alg} and Related Work}\label{sec:related}

In this section we comment on the iteration complexity result for Algorithm~\ref{apd:alg} provided in Theorem~\ref{apd:thm}. We consider important and illustrative special cases of this complexity result and draw connections with the existing results in  the literature.

\subsection{Strongly convex and strongly concave case}\label{sec:case:scsc}
In this case $\mu_x,\mu_y > 0$. We can always assume $\mu_{xy} = \mu_{yx} = 0$ in Assumption~\ref{ass:A}. Then, Algorithm~\ref{apd:alg} has  iteration complexity given by
\begin{equation}\label{eq:comp:scsc}
\color{red}	\cO\left(\max\left\{\sqrt{\frac{L_x}{\mu_x}},\sqrt{\frac{L_y}{\mu_y}},\frac{L_{xy}}{\sqrt{\mu_x\mu_y}}\right\} \log\frac{1}{\epsilon}\right).
\end{equation} 
This improves  the current state-of-the-art results
\begin{equation}
	\tilde\cO\left(\max\left\{\sqrt[4]{\frac{L_x^2L_y}{\mu_x^2\mu_y}},\sqrt[4]{\frac{L_xL_y^2}{\mu_x\mu_y^2}},\frac{L_{xy}}{\sqrt{\mu_x\mu_y}}\right\}\log\frac{1}{\epsilon}\right)
\end{equation}
due to  \citet{xie2021dippa}, and 
\begin{equation}
	\tilde\cO\left(\max\left\{\sqrt{\frac{L_x}{\mu_x}},\sqrt{\frac{L_y}{\mu_y}},\sqrt{\frac{L_{xy}L}{\mu_x\mu_y}}\right\} \log\frac{1}{\epsilon}\right),
\end{equation}
due to \citet{wang2020improved}, where $\tilde\cO(\cdot)$ hides additional logarithmic factors, and $L = \max\{L_x,L_y,L_{xy}\}$.
Moreover, our result \eqref{eq:comp:scsc} matches the lower complexity bound provided by \citet{zhang2021lower}. Hence, {\em Algorithm~\ref{apd:alg} is optimal in this regime.} To the best of our knowledge, Algorithm~\ref{apd:alg} is the first algorithm which achieves the lower complexity bound \eqref{eq:comp:scsc} for smooth and strongly-convex-strongly-concave saddle-point problems with bilinear coupling.

\subsection{Affinely-constrainted minimization case}\label{sec:case:affine}
In this case $\mu_x > 0 $ and $\mu_y = 0$. Firstly, we consider the case when $L_y = 0$, i.e., $g(y)$ is a linear function. Then, problem~\eqref{eq:main} is equivalent to the smooth and strongly-convex  affinely-constrained minimization problem~\eqref{eq:lc}.
Algorithm~\ref{apd:alg} enjoys the linear convergence rate
\begin{equation}\label{eq:comp:lc}
\color{red}	\cO\left(\max\left\{
	\frac{L_{xy}}{\mu_{xy}}\sqrt{\frac{L_x}{\mu_x}},
	\frac{L_{xy}^2}{\mu_{xy}^2}
	\right\}\log\frac{1}{\epsilon}\right),
\end{equation}
where $\mu_{xy} = \lminp(\mA\mA^\top) > 0$ due to Assumption~\ref{ass:A}. This result recovers the complexity of the APAPC algorithm \citep{kovalev2020optimal}. It is possible to incorporate the Chebyshev acceleration mechanism \citep{arioli2014chebyshev} into Algorithm~\ref{apd:alg} for solving problem~\eqref{eq:lc} to obtain the improved complexity
\begin{equation}\label{eq:comp:opt_lc}
\color{red}	\cO\left(
	\frac{L_{xy}}{\mu_{xy}}\sqrt{\frac{L_x}{\mu_x}}\log\frac{1}{\epsilon}\right).
\end{equation}
This matches the complexity of the OPAPC algorithm of \citet{kovalev2020optimal,salim2021optimal}, which was shown to be optimal \citep{salim2021optimal,scaman2017optimal}.

\subsection{Strongly convex and concave case}\label{sec:case:scc}
We also allow $L_y>0$, i.e., function $g(y)$ is a general, not necessarily linear, smooth and convex function. It is often possible that $\mu_{xy} > 0$ due to Assumption~\ref{ass:A}; for instance, when $\mA$ is a full row rank matrix. Then, Algorithm~\ref{apd:alg} enjoys the following linear iteration complexity:
\begin{equation}\label{eq:comp:scc}
\color{red}	\cO\left(\max\left\{
	\frac{\sqrt{L_xL_y}}{\mu_{xy}},
	\frac{L_{xy}}{\mu_{xy}}\sqrt{\frac{L_x}{\mu_x}},
	\frac{L_{xy}^2}{\mu_{xy}^2}
	\right\}\log\frac{1}{\epsilon}\right).
\end{equation}
This case was previously studied by \citet{du2019linear, du2017stochastic, zhang2021don}. \citet{du2019linear} provided an analysis for an algorithm called Sim-GDA, and established its iteration complexity
\begin{equation}
	\cO\left(\max\left\{
	\frac{L_x^3L_yL_{xy}^2}{\mu_x^2\mu_{xy}^4},
	\frac{L_x^3L_{xy}^4}{\mu_x^3\mu_{xy}^4}
	\right\}\log\frac{1}{\epsilon}\right).
\end{equation}
This result is substantially worse than our complexity \eqref{eq:comp:scc}; possibly due to a suboptimal analysis. Subsequently, \citet{zhang2021don} provided an improved analysis for the Sim-GDA algorithm, obtaining the complexity 
\begin{equation}
	\cO\left(\max\left\{
	\frac{L^3}{\mu_x\mu_{xy}^2},
	\frac{L^2}{\mu_x^2}
	\right\}\log\frac{1}{\epsilon}\right).
\end{equation}
They also studied the Alt-GDA method, obtaining the complexity
\begin{equation}
	\cO\left(\max\left\{
	\frac{L^2}{\mu_{xy}^2},
	\frac{L}{\mu_x}
	\right\}\log\frac{1}{\epsilon}\right),
\end{equation}
where $L = \max\{L_x,L_y,L_{xy}\}$. However, these results are local, i.e., they are valid only if the initial iterates of these algorithms are close enough to the solution of problem \eqref{eq:main}. Moreover, these results are still worse than our rate \eqref{eq:comp:scc} because Sim-GDA and Alt-GDA do not utilize the Nesterov acceleration mechanism, while our Algorithm~\ref{apd:alg} does.

\subsection{Bilinear case}\label{sec:case:bilinear}
In this case $\mu_x = \mu_y = L_x = L_y = 0$. That is, functions $f(x)$ and $g(y) $ are linear. Then, problem~\eqref{eq:main} turns into the bilinear  min-max problem \eqref{eq:bilinear}, and $\mu_{xy}^2 = \mu_{yx}^2 = \lminp(\mA^\top\mA) > 0$ due to Assumption~\ref{ass:A}. The iteration complexity of Algorithm~\ref{apd:alg} becomes
\begin{equation}\label{eq:comp:bilinear}
\color{blue}	\cO\left(\frac{L_{xy}^2}{\mu_{xy}^2}\log\frac{1}{\epsilon}\right).
\end{equation}
This recovers the results of \citet{daskalakis2018training,liang2019interaction,gidel2018variational,gidel2019negative,mishchenko2020revisiting,mokhtari2020unified} for the bilinear min-max problem \eqref{eq:bilinear}. However, this result is worse than the complexity lower bound 
\begin{equation}
	\cO\left(\frac{L_{xy}}{\mu_{xy}}\log\frac{1}{\epsilon}\right),
\end{equation}
obtained in the work of \citet{ibrahim2020linear}, which was reached  by \citet{azizian2020accelerating}\footnote{We provide these results for completeness. The result of \citet{azizian2020accelerating} is better than our result~\eqref{eq:comp:bilinear} for Algorithm~\ref{apd:alg} because they specifically focus on solving the bilinear min-max problem~\eqref{eq:bilinear}, while Algorithm~\ref{apd:alg} aims to solve the much more general convex-concave saddle-point problem~\eqref{eq:main}.}.

\begin{algorithm*}[!htb]
	\caption{GDAE: Gradient Descent-Ascent with Extrapolation}
	\label{gda:alg}
	\begin{algorithmic}
		\State {\bf Input:} $x^0 \in \R^{d_x}$, $y^0 \in \R^{d_y}$, $\eta_x, \eta_y > 0$, $\theta \in (0,1)$
		\State $x^{-1} = x^0$
		\State $y^{-1} = y^0$
		\For{$k=0,1,2,\ldots$}
		\State $x^{k+1} = x^k - \eta_x \nabla_x F(x^k,y^k) - \eta_x\theta(\nabla_x F(x^{k-1},y^k) - \nabla_x F(x^{k-1},y^{k-1}) )$\label{gda:line:x}
		\State $y^{k+1} = y^k + \eta_y \nabla_y F(x^{k+1}, y^k)$\label{gda:line:y}
		\EndFor
	\end{algorithmic}
\end{algorithm*}

\subsection{Convex-concave case}\label{sec:case:cc}
In this case $\mu_y = \mu_x = 0$. It is often possible that $\mu_{xy} = \mu_{yx} > 0$ due to Assumption~\ref{ass:A}, for example, when $\mA$ is a square and full rank matrix. Then, the iteration complexity of Algorithm~\ref{apd:alg} becomes
\begin{equation}\label{eq:comp:cc}
\color{red}	\cO\left(\max\left\{
	\frac{\sqrt{L_xL_y}L_{xy}}{\mu_{xy}^2},
	\frac{L_{xy}^2}{\mu_{xy}^2}
	\right\}\log\frac{1}{\epsilon}\right),
\end{equation}
which is still linear.
This complexity result generalizes the result \eqref{eq:comp:bilinear} for bilinear min-max problems   as it allows for general, not necessarily linear, convex and smooth functions $f(x)$ and $g(x)$.
To the best of our knowledge, Algorithm~\ref{apd:alg} is the first algorithm which can achieve linear convergence for smooth and non-strongly convex non-strongly concave min-max problems with bilinear coupling.

\section{A Novel Gradient Method for General Convex-Concave Saddle-Point Problems}

In this section we present a new method---Gradient Descent-Ascent Method with Extrapolation (GDAE; Algorithm~\ref{gda:alg})---for solving problem~\eqref{eq:main2}. 

\subsection{Assumptions and definitions}

First, we state the main assumptions that we impose on problem~\eqref{eq:main2}. 
\begin{assumption}\label{ass:F1}
	Function $F(x,y)$ is $L_x$-smooth and $\mu_x$-strongly convex in $x$ and $L_y$-smooth and $\mu_y$-strongly concave in $y$, where $L_x\geq \mu_x\geq 0$, $L_y\geq \mu_y\geq 0$.
\end{assumption}

Assumption~\ref{ass:F1} generalizes the smoothness and strong convexity Assumptions~\ref{ass:f} and~\ref{ass:g} imposed on problem~\eqref{eq:main}.

\begin{assumption}\label{ass:F2}
	There exists a constant $L_{xy} > 0$ such that for all $x,x_1,x_2 \in \R^{d_x}$ and $y,y_1,y_2 \in \R^{d_y}$, the following inequalities hold:
	\begin{equation}\label{eq:F2}
		\begin{split}
			\norm{\nabla_x F(x,y_1) - \nabla_x F(x,y_2)} &\leq L_{xy}\norm{y_1 - y_2},\\
			\norm{\nabla_y F(x_1,y) - \nabla_yF(x_2,y)} &\leq L_{xy}\norm{x_1 - x_2}.
		\end{split}
	\end{equation}
\end{assumption}

\begin{assumption}\label{ass:F3}
	There exist constants $\mu_{xy}, \mu_{yx} \geq 0$ such that for all $x,x_1,x_2 \in \R^{d_x}$ and $y,y_1,y_2 \in \R^{d_y}$, the following inequalities hold:
	\begin{equation}\label{eq:F3}
		\begin{split}
			\norm{\nabla_x F(x,y_1) - \nabla_x F(x,y_2)} &\geq \mu_{xy}\norm{y_1 - y_2},\\
			\norm{\nabla_y F(x_1,y) - \nabla_yF(x_2,y)} &\geq \mu_{yx}\norm{x_1 - x_2}.
		\end{split}
	\end{equation}
\end{assumption}

Assumptions~\ref{ass:F2} and~\ref{ass:F3} combined form a generalized  version of Assumption~\ref{ass:A} for problem~\eqref{eq:main2}. Indeed, if one assumes that \eqref{eq:F2} and \eqref{eq:F3} hold for problem~\eqref{eq:main}, then the following inequalities hold
\begin{equation}
	\begin{split}
		\mu_{xy}^2 &\leq \lmin(\mA\mA^\top) \leq L_{xy}^2,\\
		\mu_{yx}^2 &\leq \lmin(\mA^\top\mA) \leq L_{xy}^2,
	\end{split}
\end{equation}
which can be seen as a simplified version of Assumption~\ref{ass:A}.

Next, we recall several basic definitions.
Similarly to \cref{sec:def_ass}, by $\cS \subset \R^{d_x}\times \R^{d_y}$ we denote the solution set of problem~\eqref{eq:main2}. Note that $(x^*,y^*) \in \cS$ if and only if $(x^*,y^*)$ satisfies the optimality conditions
\begin{equation}\label{eq:opt2}
	\begin{cases}
		\nabla_x F(x^*,y^*)  = 0,\\
		\nabla_y F(x^*,y^*)  = 0.
	\end{cases}
\end{equation}
We also use notions of iteration complexity for achieving an $\epsilon$-accurate solution analogous to \Cref{def:epsilon_accurate,def:iteration_complexity}, respectively.

\subsection{Algorithm development}
We now present the main ingredients and intuition behind the development of our method.

%It is based on a standard gradient descent-ascent algorithm \citep{du2019linear,zhang2021don} with the addition of a new mechanism which we call {\em gradient extrapolation}.

\paragraph{Implicit gradient descent-ascent.}
First, we recall the iterations of the Forward-Backward algorithm \eqref{apd:eq:FB}, which can be written in the form
\begin{equation} \label{eq:buh9fd8hf8d}
	\begin{cases}
		x^{+} = x - \eta_x \nabla f (x) - \eta_x \mA^\top y^{+}\\
		y^{+} = y - \eta_y \nabla g(y) + \eta_y \mA x^{+}
	\end{cases},
\end{equation}
where $\eta_x,\eta_y > 0$ are stepsizes. Iterations \eqref{eq:buh9fd8hf8d} can also be written in terms of the gradients $\nabla_x F(x,y)$ and $\nabla_y F(x,y)$,
\begin{equation}\label{gda:eq:implicit}
	\begin{cases}
		x^{+} = x - \eta_x \nabla_x F(x, y^+)\\
		y^{+} = y + \eta_y \nabla_y F(x^+,y)
	\end{cases},
\end{equation}
which makes the method applicable to the general problem~\eqref{eq:main2}.

Iterations \eqref{gda:eq:implicit} were the foundation for the development of Algorithm~\ref{apd:alg} for solving problem~\eqref{eq:main}, with strong convergence properties established by Theorem~\ref{apd:thm}. Hence, we expect that this approach would work for solving the more general problem~\eqref{eq:main2}. However, \eqref{gda:eq:implicit} is an implicit algorithm and can't be applied in its current state.

\paragraph{Gradient extrapolation.}

In analogy to the development of Algorithm~\ref{apd:alg}, we want to find a good approximation of the implicit iterations \eqref{gda:eq:implicit}. A naive solution would be using the approximation
\begin{equation}
	\begin{cases}
		x^{+} = x - \eta_x \nabla_x F(x, y_m)\\
		y^{+} = y + \eta_y \nabla_y F(x^+,y)
	\end{cases},
\end{equation}
where $y_m \approx y^+$. Similarly to Section~\ref{sec:development}, we could use $y_m = y$, which would lead to the Alt-GDA algorithm \citep{zhang2021don}, or $y_m = y + \theta(y - y^-)$, which is a linear extrapolation step \citep{chambolle2011first}. 

The linear extrapolation step with $\theta = 1$ is based on the ``assumption'' that $y^+\approx y_m = y + (y - y^-)$, or equivalently, $y^+ - y\approx y - y^-$. We can use a similar intuition for the gradients $\nabla_x F(x,y)$ rather than the iterates $y$. In particular, we ``assume'' that
\begin{equation*}
	\nabla_x F(x,y^+) - \nabla_x F(x,y) \approx \nabla_x F(x^-,y) - \nabla_x F(x^-,y^-),
\end{equation*}
or equivalently,
\begin{equation*}
	\begin{cases}
		\nabla_x F(x,y^+)\approx  \Delta_x\\
		\Delta_x = \nabla_x F(x,y)+  (\nabla_x F(x^-,y) - \nabla_x F(x^-,y^-))
	\end{cases}.
\end{equation*}
This intuition leads to the following novel update rule, which we call {\em gradient extrapolation step}:
\begin{equation*}
	\begin{cases}
		\Delta_x =  \nabla_x F(x,y) + \theta(\nabla_x F(x^-,y) - \nabla_x F(x^-,y^-))\\
		x^+ = x - \eta_x \Delta_x
	\end{cases}.
\end{equation*}
Above, $\theta \in (0,1]$ is the extrapolation parameter.
We use this gradient extrapolation step together with the update rule for $y$ from \eqref{gda:eq:implicit} in the design of our Algorithm~\ref{gda:alg}.

\subsection{Convergence of \Cref{gda:alg} and related work}\label{sec:related2}
We now present Theorem~\ref{gda:thm}, which establishes  linear convergence rate for Algorithm~\ref{gda:alg} under Assumptions~\ref{ass:F1},~\ref{ass:F2} and~\ref{ass:F3}.
\begin{theorem}\label{gda:thm}
	Let Assumptions~\ref{ass:F1},~\ref{ass:F2} and~\ref{ass:F3} and condition \eqref{eq:mu} hold. Then
	there exist parameters of Algorithm~\ref{gda:alg} such that the iteration complexity for finding an $\epsilon$-accurate solution of problem~\eqref{eq:main2} is 
	\begin{equation}
		\cO\left(\min\left\{T_a,T_b, T_c,T_d\right\} \log \frac{C}{\epsilon}\right),
	\end{equation}
	where  $T_a,T_b, T_c,T_d$ are defined as
	\begin{align*}
		T_a &= \max\left\{\frac{L_x}{\mu_x}, \frac{L_y}{\mu_y},\frac{L_{xy}}{\sqrt{\mu_x\mu_y}}\right\},\\
		T_b &=\max\left\{
		\frac{L_x}{\mu_x},
		\frac{L_xL_y}{\mu_{xy}^2},
		\frac{L_{xy}^2}{\mu_{xy}^2}
		\right\},\\
		T_c &= \max\left\{
		\frac{L_y}{\mu_y},
		\frac{L_xL_y}{\mu_{yx}^2},
		\frac{L_{xy}^2}{\mu_{yx}^2}
		\right\},\\
		T_d &= \max\left\{
		\frac{L_xL_y}{\mu_{xy}^2},
		\frac{L_xL_y}{\mu_{yx}^2},
		\frac{L_{xy}^2}{\mu_{xy}^2},
		\frac{L_{xy}^2}{\mu_{yx}^2}\right\},
	\end{align*}
	and $C >0$ is some constant, which does not depend on $\epsilon$, but possibly depends on $L_x,\mu_x, L_y,\mu_y, L_{xy}, \mu_{xy}, \mu_{yx}$.
\end{theorem}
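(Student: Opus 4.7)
\textbf{Proof plan for Theorem \ref{gda:thm}.} The plan is to design an augmented Lyapunov function and show it contracts geometrically, with a rate whose reciprocal equals $\min\{T_a,T_b,T_c,T_d\}$. The strategy parallels Theorem \ref{apd:thm} but is considerably simpler because Algorithm \ref{gda:alg} is non-accelerated and the complexities scale with condition numbers rather than their square roots. Denote $z^k=(x^k,y^k)$ and the saddle operator $G(z)=(\nabla_x F(x,y),-\nabla_y F(x,y))$. By Assumption \ref{ass:F1}, $G$ satisfies $\langle G(z)-G(z'),z-z'\rangle\ge\mu_x\|x-x'\|^2+\mu_y\|y-y'\|^2$. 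Existence of a saddle point $(x^*,y^*)$ obeying \eqref{eq:opt2} in each regime follows from a coercivity argument analogous to Lemma \ref{lem}, using Assumption \ref{ass:F3} to upgrade degenerate $\mu_x,\mu_y$ via $\mu_{xy},\mu_{yx}$ whenever condition \eqref{eq:mu} requires it. The central observation motivating the algorithm is that the $x$-update reads $x^{k+1}=x^k-\eta_x\Delta_x^k$ with
\begin{equation*}
\Delta_x^k=\nabla_x F(x^k,y^k)+\theta\bigl(\nabla_x F(x^{k-1},y^k)-\nabla_x F(x^{k-1},y^{k-1})\bigr),
\end{equation*}
so $\Delta_x^k$ is a gradient-extrapolated predictor of $\nabla_x F(x^k,y^{k+1})$; the algorithm is thus a computable surrogate for the implicit scheme \eqref{gda:eq:implicit}.

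First I would derive a one-step inequality by expanding $\|x^{k+1}-x^*\|^2$ and $\|y^{k+1}-y^*\|^2$ via the identity $2\langle a-b,b-c\rangle=\|a-c\|^2-\|a-b\|^2-\|b-c\|^2$, substituting the updates, and invoking strong monotonicity of $G$ together with the extrapolation residual $e^k=\Delta_x^k-\nabla_x F(x^k,y^{k+1})$. A direct add-and-subtract combined with Assumption \ref{ass:F2} yields
\begin{equation*}
\|e^k\|\le L_{xy}\|y^{k+1}-y^k\|+\theta L_{xy}\|y^k-y^{k-1}\|.
\end{equation*}
Second, I would introduce the potential
\begin{equation*}
\Psi^k=\tfrac{1}{\eta_x}\|x^k-x^*\|^2+\tfrac{1}{\eta_y}\|y^k-y^*\|^2+\xi\|y^k-y^{k-1}\|^2
\end{equation*}
with weight $\xi>0$ to be tuned, absorb the error terms via Young's inequality, and prove a contraction $\Psi^{k+1}\le(1-\rho)\Psi^k$ subject to algebraic inequalities on $(\eta_x,\eta_y,\theta,\xi,\rho)$. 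The negative $\tfrac{1}{\eta_y}\|y^{k+1}-y^k\|^2$ term that appears naturally in the expansion is what allows this absorption.

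The four regimes are produced by specializing the parameter system. Regime $T_a$ uses $\mu_x,\mu_y>0$ with stepsizes of order $\eta_x\sim 1/L_x$, $\eta_y\sim 1/L_y$, and $\theta$ calibrated so the coupling term $L_{xy}/\sqrt{\mu_x\mu_y}$ dominates. In regimes $T_b,T_c,T_d$ one invokes Assumption \ref{ass:F3} to compensate for missing strong convexity: using $\|\nabla_x F(x^*,y)\|\ge\mu_{xy}\|y-y^*\|$ together with $\nabla_x F(x^*,y^*)=0$ and the $x$-update of Algorithm \ref{gda:alg}, one converts a fraction of the negative $\|x^{k+1}-x^k\|^2$ term into an extra contraction on $\|y^k-y^*\|^2$ at the cost of an effective factor $L_{xy}^2/\mu_{xy}^2$; a symmetric manipulation with $\mu_{yx}$ gives $T_c$, and $T_d$ uses both. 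Minimizing over the four choices of parameters yields the claimed iteration complexity, with $C$ arising from the ratio of tuned Lyapunov weights to the initial distance.

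\textbf{Main obstacle.} The hard part is the algebraic bookkeeping needed to make the contraction $\Psi^{k+1}\le(1-\rho)\Psi^k$ hold with $\rho$ matching the claimed rate simultaneously in all four regimes, especially when $\mu_x=\mu_y=0$. In particular, regime $T_d$ requires carefully combining Assumption \ref{ass:F3}, the first-order optimality of $(x^*,y^*)$, and the Gauss--Seidel structure of the $y$-update to trade $\|x^{k+1}-x^k\|^2$ and $\|y^{k+1}-y^k\|^2$ for genuine contractions on $\|x^k-x^*\|^2$ and $\|y^k-y^*\|^2$ through the coupling constants; ensuring that the extrapolation residual $e^k$ does not overwhelm these contributions—and that a single weight $\xi$ and choice of $\theta$ work uniformly across regimes—is the central technical challenge.
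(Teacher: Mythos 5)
Your high-level plan matches the paper's: set up a one-step expansion of $\|x^{k+1}-x^*\|^2$ and $\|y^{k+1}-y^*\|^2$, identify the extrapolation residual, build a Lyapunov function that contracts geometrically, and tune $(\eta_x,\eta_y,\theta,\delta)$ separately to produce each of the four regimes. However, there are two gaps that matter.

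First, your proposed Lyapunov lacks the cross term that the paper uses:
\begin{equation*}
\Psi^k_{\text{paper}} = \tfrac{1}{\eta_x}\|x^k-x^*\|^2 + \tfrac{1}{\eta_y}\|y^k-y^*\|^2 - 2\langle \nabla_x F(x^{k-1},y^k)-\nabla_x F(x^{k-1},y^{k-1}),\,x^k-x^*\rangle + \tfrac{5}{16\eta_y}\|y^k-y^{k-1}\|^2.
\end{equation*}
This cross term is not decorative: after adding and subtracting $\nabla_x F(x^k,y^{k+1})$ in the $x$-step, the residual inner product $-2\theta\langle \nabla_x F(x^{k-1},y^k)-\nabla_x F(x^{k-1},y^{k-1}),x^{k+1}-x^*\rangle$ is split as $\langle \cdot\,,x^k-x^*\rangle + \langle\cdot\,,x^{k+1}-x^k\rangle$; the first piece \emph{telescopes} exactly into $\theta$ times the Lyapunov cross term at step $k$, and only the second, $O(\|y^k-y^{k-1}\|\|x^{k+1}-x^k\|)$, piece needs Young's inequality. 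Absorbing the whole residual by Young against $\|x^{k+1}-x^*\|^2$, as your sketch suggests, injects a positive $c\|x^{k+1}-x^*\|^2$ term on the contracting side with no compensating decay when $\mu_x=0$, which is exactly the regime you need to survive. A Lyapunov of the form $\tfrac1{\eta_x}\|x-x^*\|^2+\tfrac1{\eta_y}\|y-y^*\|^2+\xi\|y^k-y^{k-1}\|^2$ is not enough here; you must carry either the cross term itself or two consecutive history terms of the type $\|y^k-y^{k-1}\|^2$ calibrated to absorb the $\|y^{k+1}-y^k\|$ component of $e^k$.

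Second, the mechanism you invoke for regimes $T_b,T_c,T_d$ is not the one that works. You claim to trade a fraction of the negative $\|x^{k+1}-x^k\|^2$ term for a $\|y^k-y^*\|^2$ contraction via Assumption~\ref{ass:F3}. But $\|x^{k+1}-x^k\|=\eta_x\|\Delta_x^k\|$ does not cleanly lower bound $\|y^k-y^*\|$: $\Delta_x^k\approx \nabla_x F(x^k,y^k)$ differs from $\nabla_x F(x^*,y^k)$ by up to $L_x\|x^k-x^*\|$, so any such inequality necessarily produces a cross term against $\|x^k-x^*\|$ that cannot be controlled when $\mu_x=0$. The paper instead manufactures the $\|y^k-y^*\|^2$ decay from the saddle-value gap: using $L_x$-smoothness of $F(\cdot,y^*)$ and $\nabla_x F(x^*,y^*)=0$ one has $F(x^{k+1},y^*)-F(x^*,y^*)\geq \tfrac{1}{2L_x}\|\nabla_x F(x^{k+1},y^*)\|^2$; then the elementary $-\|a\|^2\leq -\tfrac12\|a-b\|^2+\|b\|^2$ with $b=\nabla_x F(x^{k+1},y^k)$ together with Assumption~\ref{ass:F3} at $x=x^{k+1}$ yields the term $-\tfrac{\delta_x\mu_{xy}^2}{2L_x}\|y^k-y^*\|^2$, while the additive $\|\nabla_x F(x^{k+1},y^k)\|^2$ is paid for via the update rule using the $\|x^{k+1}-x^k\|^2$ and $\|y^k-y^{k-1}\|^2$ credits. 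In short, the gap term $F(x^{k+1},y^*)-F(x^*,y^*)$ (and its dual counterpart) is the load-bearing element you are missing; without it, Assumption~\ref{ass:F3} has no symmetric point at which to be applied, and your contraction in regimes with $\mu_x\mu_y=0$ will not close.
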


Consider the case when $\mu_x, \mu_y > 0$. In this case the iteration complexity of \Cref{gda:alg} becomes
\begin{equation}
\color{red}	\cO\left(\max\left\{\frac{L_x}{\mu_x}, \frac{L_y}{\mu_y},\frac{L_{xy}}{\sqrt{\mu_x\mu_y}}\right\}\log\frac{1}{\epsilon}\right).
\end{equation}
This recovers the result of \citet{cohen2020relative}. Moreover, when $\mu_x = \mu_y$, this result recovers the complexity of solving problem~\eqref{eq:main2} by a number of known algorithms, including the extragradient method \citep{korpelevich1976extragradient}, optimistic gradient method \citep{daskalakis2018training,gidel2018variational}, and the dual extrapolation method \citep{nesterov2006solving}.

Finally, consider then opposite case when at least one of the constants $\mu_x$ and $\mu_y$ is zero. To the best of our knowledge, there are no algorithms that can achieve a linear convergence. However, \Cref{gda:alg} can still achieve linear iteration complexity provided that  condition~\eqref{eq:mu} is satisfied.

\bibliography{reference}
\bibliographystyle{icml2022}

%%%%%%%%%%%%%%%%%%%%%%%%%%%%%%%%%%%%%%%%%%%%%%%%%%%%%%%%%%%%%%%%%%%%%%%%%%%%%%%
%%%%%%%%%%%%%%%%%%%%%%%%%%%%%%%%%%%%%%%%%%%%%%%%%%%%%%%%%%%%%%%%%%%%%%%%%%%%%%%
% APPENDIX
%%%%%%%%%%%%%%%%%%%%%%%%%%%%%%%%%%%%%%%%%%%%%%%%%%%%%%%%%%%%%%%%%%%%%%%%%%%%%%%
%%%%%%%%%%%%%%%%%%%%%%%%%%%%%%%%%%%%%%%%%%%%%%%%%%%%%%%%%%%%%%%%%%%%%%%%%%%%%%%
\newpage
\appendix
\onecolumn

\part*{Appendix}

In  \Cref{sec:AppC} we provide a proof of \Cref{lem}, in \Cref{sec:AppA} we provide a proof of \Cref{apd:thm}, and in \Cref{sec:AppB} we provide a proof of \Cref{gda:thm}.

\section{Proof of \Cref{lem}}\label{sec:AppC}

{\bf Part 1.} Let us first show that primal problem~\eqref{eq:primal} has at least a single solution $x^* \in \R^{d_x}$.

	Condition $\eqref{eq:mu}$ implies that $\max\{\mu_x,\mu_{yx}\} > 0$. If $\mu_x > 0$ then function $P(x)$ is obviously strongly convex and primal problem indeed has a solution. Consider the opposite case $\mu_x = 0$. Then $\mu_{yx} > 0$ due to condition \eqref{eq:mu}.
	
	\Cref{ass:A} and $\mu_{yx} >0$ imply that $\nabla f(x) \in \range \mA^\top$ for all $x \in \R^{d_x}$. Hence, 
	\begin{equation}
		f(x + x') = f(x) \text{ for all } x\in\R^{d_x}, x' \in \ker \mA.
	\end{equation}
	Using the definition of $P(x)$ we get
	\begin{align*}
		P(x + x') &= f(x + x') + g^*(\mA(x + x'))
		\\&=
		f(x) + g^*(\mA x) \\&= P(x) 
	\end{align*}
	for all  $x\in\R^{d_x}, x' \in \ker \mA$. From this one can conclude that
	\begin{equation*}
		\min_{x \in \R^{d_x}} P(x)  = \min_{x \in x^0 + \range \mA^\top} P(x).
	\end{equation*}
	for any vector $x^0 \in \R^{d_x}$. From the definition of $P(x)$ it follows that $P(x)$ is $\mu_{yx}$-strongly convex on any affine space $x^0 + \range \mA^\top$ for arbitrary $x^0 \in \R^{d_x}$.
	Hence, problem $\min_{x \in x^0 + \range \mA^\top} P(x)$ has a unique solution and primal problem $\min_{x \in \R^{d_x}} P(x)$ has at least a single solution $x^*$.

{\bf Part 2.} Let us show that there exists $y^* \in \R^{d_y}$ such that $(x^*,y^*) \in \cS$, i.e., $(x^*,y^*)$ satisfy optimality conditions  \eqref{eq:opt}.

	Let us show that $-\nabla f(x^*) \in \mA^\top \partial g^*(\mA x^*)$. We use condition \eqref{eq:mu} which implies $\max\{\mu_y,\mu_{xy}\} > 0$. If $\mu_y > 0$, then function $g^*(y)$ is smooth and our statement is trivial. Consider the opposite case $\mu_y = 0$.  Then $\mu_{xy} > 0$ due to condition~\eqref{eq:mu}.
	
	\Cref{ass:A} and $\mu_{xy} >0$ imply that $\nabla g(y) \in \range \mA$ for all $y \in \R^{d_y}$. Hence, $\dom g^*(\cdot) \subset \range \mA$. Let $h(x) = g^*(\mA x)$. From standard theory it follows that $-\nabla f(x^*) \in \partial h(x^*)$ or
	\begin{equation*}
		h(x) \geq h(x^*) - \<\nabla f(x^*), x - x^* > \text{ for all } x \in \R^{d_x},
	\end{equation*}
	From this one can conclude that
	\begin{equation*}
		\<\nabla f(x^*),x - x^*> \geq 0 \text{ for all } x \in x^* + \ker \mA,
	\end{equation*}
	which implies $\nabla f(x^*) \in (\ker \mA)^\perp = \range \mA^\top$. Hence, there exists vector $y^* \in \R^{d_y}$ such that $-\nabla f(x^*) = \mA^\top y^*$. Now, we can write
	\begin{equation*}
		h(x) \geq h(x^*) + \<\mA^\top y^*, x - x^* > \text{ for all } x \in \R^{d_x},
	\end{equation*}
	which is equivalent to
	\begin{equation*}
		g^*(\mA x) \geq g^*(\mA x^*) + \<y^*, \mA x - \mA x^* > \text{ for all } x \in \R^{d_x}.
	\end{equation*}
	The latter can be written as
	\begin{equation*}
		g^*(y) \geq g^*(\mA x^*) + \<y^*, y - \mA x^* > \text{ for all } y \in \range \mA.
	\end{equation*}
	But $\dom g^*(\cdot) \subset \range \mA$, which means that $g^*(y) = +\infty$ for all $y \notin \range \mA$.
	This implies
	\begin{equation*}
		g^*(y) \geq g^*(\mA x^*) + \<y^*, y - \mA x^* > \text{ for all } y \in \R^{d_y},
	\end{equation*}
	which is a definition of $y^* \in \partial g^*(\mA x^*)$. An equivalent for this is $\nabla g(y^*) = \mA x^*$, which together with $-\nabla f(x^*) = \mA^\top y^*$ form optimality condition~\eqref{eq:opt}.
	
	{\bf Part 3.}
	We showed that there exists a pair of vectors $(x^*,y^*) \in \R^{d_x} \times \R^{d_y}$ which is a saddle point of the function $F(x,y)$ in problem~\eqref{eq:main}. Hence, strong duality holds and proof of the rest of \Cref{lem} is trivial.
\qed

\newpage

\section{Proof of Theorem~\ref{apd:thm}} \label{sec:AppA}

\begin{lemma}
	There exists a solution $(x^*,y^*) \in \cS$ of the problem~\eqref{eq:main} such that for all $k=0,1,2,\ldots$ the iterates of \Cref{apd:alg} satisfy
	\begin{equation}\label{apd:eq:mu:2}
		\begin{split}
			\norm{\mA(x^k - x^*)} &\geq \mu_{yx}\norm{x^k - x^*},\\
			\norm{\mA^\top(y^k - y^*)} &\geq \mu_{xy}\norm{y^k - y^*}.
		\end{split}
	\end{equation}
\end{lemma}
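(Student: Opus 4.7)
The plan is to separate the two non-trivial regimes created by Assumption~\ref{ass:A}: the ``\emph{otherwise}'' regime, where the inequality reduces to a pure eigenvalue fact and needs no work on the iterates, and the ``\emph{range}'' regime, where $\mu_{yx}$ (resp.\ $\mu_{xy}$) is defined via $\lminp$ under the additional hypothesis $\nabla f(x)\in\range\mA^\top$ (resp.\ $\nabla g(y)\in\range\mA$). In the first regime $\mu_{yx}^2\le\lmin(\mA^\top\mA)$ holds, so $\|\mA z\|^2\ge\mu_{yx}^2\|z\|^2$ for every $z\in\R^{d_x}$ and the claim is immediate with any $(x^*,y^*)\in\cS$ (which is nonempty thanks to \Cref{lem}); analogously for $y$. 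So the real content is the second regime, which I treat below for the $x$-coordinate, the $y$-coordinate being symmetric.

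\textbf{Step 1: invariance of $\range\mA^\top$ along the trajectory.} I claim that if $\nabla f(x)\in\range\mA^\top$ for all $x$, then every iterate satisfies $x^k,\,x_f^k,\,x_g^k\in\range\mA^\top$. This is proved by induction on $k$. The base case follows from the initialization $x^0\in\range\mA^\top$ (given) and $x_f^0=x^0$. For the inductive step, observe each term on the right-hand side of the update in line~8 of \Cref{apd:alg}:
\begin{equation*}
  \mA^\top(\mA x^k-\nabla g(y_g^k))\in\range\mA^\top,\qquad \nabla f(x_g^k)\in\range\mA^\top,\qquad \mA^\top y_m^k\in\range\mA^\top,
\end{equation*}
while $x_g^k-x^k=(1-\tau_x)(x_f^k-x^k)\in\range\mA^\top$ by the inductive hypothesis. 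Hence $x^{k+1}\in\range\mA^\top$. The convex combination in line~6 gives $x_g^{k+1}\in\range\mA^\top$, and line~10 gives $x_f^{k+1}\in\range\mA^\top$, closing the induction.

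\textbf{Step 2: existence of a solution in the same subspace.} I still assume $\nabla f(x)\in\range\mA^\top$ for all $x$. Then for any $x\in\R^{d_x}$ and any $z\in\ker\mA$,
\begin{equation*}
  \tfrac{d}{dt}f(x+tz)=\langle\nabla f(x+tz),z\rangle=0,
\end{equation*}
since $\nabla f(x+tz)\in\range\mA^\top=(\ker\mA)^\perp$. Thus $f$ is constant on cosets of $\ker\mA$, and in particular $\nabla f$ is constant on these cosets as well. By \Cref{lem}, $\cS\ne\emptyset$; pick any $(\bar x^*,y^*)\in\cS$ and let $x^*$ be the orthogonal projection of $\bar x^*$ onto $\range\mA^\top$, so $\bar x^*-x^*\in\ker\mA$. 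Then $\mA x^*=\mA\bar x^*=\nabla g(y^*)$ and $\nabla f(x^*)=\nabla f(\bar x^*)=-\mA^\top y^*$, so $(x^*,y^*)\in\cS$ with $x^*\in\range\mA^\top$.

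\textbf{Step 3: conclusion.} Combining Steps 1 and 2, $x^k-x^*\in\range\mA^\top$, hence by the standard eigenvalue identity on the orthogonal complement of $\ker\mA$,
\begin{equation*}
  \|\mA(x^k-x^*)\|^2\ge\lminp(\mA^\top\mA)\,\|x^k-x^*\|^2\ge\mu_{yx}^2\,\|x^k-x^*\|^2.
\end{equation*}
Repeating Steps 1--3 verbatim with $(x,\mA,\nabla f,\range\mA^\top)$ replaced by $(y,\mA^\top,\nabla g,\range\mA)$ using lines~5, 7, 9, 11 of the algorithm and the initialization $y^0\in\range\mA$ yields $y^k-y^*\in\range\mA$ and the second inequality. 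The only subtle point is making sure the \emph{same} pair $(x^*,y^*)$ works simultaneously for both inequalities; since the projections in Step~2 are independent (the $x$-projection preserves $y^*$ and vice versa), this is automatic. The main obstacle is really bookkeeping in the induction of Step~1: one has to verify that none of the compound quantities ($x_g^k$, the gradient correction terms, the Nesterov-style extrapolation $y_m^k$) introduces a component outside $\range\mA^\top$; everything else is a direct application of Assumption~\ref{ass:A} and \Cref{lem}.
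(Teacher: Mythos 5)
Your proof is correct and follows essentially the same route as the paper, which simply states that the lemma is a ``trivial extension of the derivations from the proof of Lemma~4.1,'' where the facts that $\nabla f\in\range\mA^\top$ forces $f$ (and hence $\nabla f$) to be constant on cosets of $\ker\mA$, and that one may project a solution onto $\range\mA^\top$ (resp.\ $\range\mA$), are already established. You supply the bookkeeping the paper leaves implicit --- the induction showing the iterates remain in $\range\mA^\top$ (resp.\ $\range\mA$) and the observation that the two projections can be performed simultaneously on a single saddle point --- but the underlying argument is the same.
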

\begin{proof}
	The proof of this lemma is a trivial extension of the derivations from the proof of \Cref{lem}.
\end{proof}

\begin{lemma}
	Let $\tau_x$ be defined as
	\begin{equation}
		\tau_x = (\sigma_x^{-1} + 1/2)^{-1}.
	\end{equation}
	Let $\alpha_x$ be defined as
	\begin{equation}
		\alpha_x = \mu_x.
	\end{equation}
	Let $\beta_x$ be defined as
	\begin{equation}
		\beta_x = \min\left\{\frac{1}{2L_y}, \frac{1}{2\eta_x L_{xy}^2}\right\}.
	\end{equation}
	Then, the following inequality holds:
	\begin{equation}\label{apd:eq:x}
		\begin{split}
			\frac{1}{\eta_x}\sqn{x^{k+1} - x^*}
			&\leq
			\left(\frac{1}{\eta_x} - \mu_x - \beta_x\mu_{yx}^2\right)\sqn{x^{k} - x^*}
			+
			\left(\mu_x + L_x\sigma_x -\frac{1}{2\eta_x}\right)
			\sqn{x^{k+1} - x^k}
			\\&
			+
			\bg_g(y_g^k,y^*)
			-
			\bg_f(x_g^k,x^*) 
			-
			\frac{2}{\sigma_x}\bg_f(x_f^{k+1},x^*) 
			+
			\left(\frac{2}{\sigma_x} - 1\right)\bg_f(x_f^k,x^*) 
			\\&-
			2\<\mA^\top (y_m^k - y^*),x^{k+1} - x^*>.
		\end{split}
	\end{equation}
\end{lemma}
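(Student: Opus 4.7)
My plan begins with the $x$-update on line~8 of \Cref{apd:alg}, which I rewrite as
\begin{equation*}
\tfrac{1}{\eta_x}(x^{k+1}-x^k) = -\alpha_x(x^k - x_g^k) - \beta_x\mA^\top(\mA x^k - \nabla g(y_g^k)) - (\nabla f(x_g^k) + \mA^\top y_m^k).
\end{equation*}
Taking inner product of both sides with $2(x^{k+1}-x^*)$ and applying the polarization identity $2\langle x^{k+1}-x^k, x^{k+1}-x^*\rangle = \sqn{x^{k+1}-x^k} + \sqn{x^{k+1}-x^*} - \sqn{x^k-x^*}$ on the left yields a master identity that expresses $\tfrac{1}{\eta_x}\sqn{x^{k+1}-x^*}$ in terms of $\tfrac{1}{\eta_x}\sqn{x^k-x^*}$, $-\tfrac{1}{\eta_x}\sqn{x^{k+1}-x^k}$, and four inner products. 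Using the optimality conditions~\eqref{eq:opt} (specifically $\nabla f(x^*) = -\mA^\top y^*$ and $\mA x^* = \nabla g(y^*)$), I regroup the stray linear terms so that the four inner products become $2\alpha_x\langle x_g^k - x^k, x^{k+1}-x^*\rangle$, $-2\beta_x\langle\mA(x^k-x^*) - (\nabla g(y_g^k)-\nabla g(y^*)),\mA(x^{k+1}-x^*)\rangle$, $-2\langle\nabla f(x_g^k)-\nabla f(x^*), x^{k+1}-x^*\rangle$, and the target $-2\langle\mA^\top(y_m^k-y^*), x^{k+1}-x^*\rangle$.

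\textbf{Acceleration term.} For the first inner product I apply the four-point identity $2\langle a-b,c-d\rangle = \sqn{a-d} + \sqn{c-b} - \sqn{a-c} - \sqn{b-d}$ with $(a,b,c,d) = (x_g^k, x^k, x^{k+1}, x^*)$. Substituting $\alpha_x = \mu_x$ produces exactly the $-\mu_x\sqn{x^k-x^*}$ and $+\mu_x\sqn{x^{k+1}-x^k}$ contributions that appear in the target, together with a $+\mu_x\sqn{x_g^k-x^*}$ term reserved for cancellation in the next step and $-\mu_x\sqn{x_g^k-x^{k+1}}\leq 0$, which I drop.

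\textbf{Nesterov/Bregman decomposition (the main work).} The heart of the argument is bounding $-2\langle\nabla f(x_g^k)-\nabla f(x^*), x^{k+1}-x^*\rangle$. The Nesterov identity $x_g^k = \tau_x x^k + (1-\tau_x)x_f^k$ rearranges to give $x^k - x^* = \tau_x^{-1}(x_g^k-x^*) - \tau_x^{-1}(1-\tau_x)(x_f^k-x^*)$, and the rule $x_f^{k+1} = x_g^k + \sigma_x(x^{k+1}-x^k)$ gives $x^{k+1}-x^k = \sigma_x^{-1}(x_f^{k+1}-x_g^k)$. Substituting and using $\tau_x^{-1} = \sigma_x^{-1} + \tfrac12$ collapses the coefficient of $(x_g^k-x^*)$ to exactly $\tfrac12$, producing the clean decomposition
\begin{equation*}
x^{k+1}-x^* = \tfrac{1}{2}(x_g^k-x^*) + \tfrac{1}{\sigma_x}(x_f^{k+1}-x^*) - \left(\tfrac{1}{\sigma_x}-\tfrac{1}{2}\right)(x_f^k-x^*).
\end{equation*}
I then take inner product with $2(\nabla f(x_g^k)-\nabla f(x^*))$ and apply the three-point Bregman identity $\langle\nabla f(u)-\nabla f(v), w-u\rangle = \bg_f(w,v) - \bg_f(w,u) - \bg_f(u,v)$ to each of the three pieces (with $u = x_g^k$, $v = x^*$, and $w\in\{x_g^k, x_f^{k+1}, x_f^k\}$). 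Smoothness gives $\bg_f(x_f^{k+1}, x_g^k) \leq \tfrac{L_x}{2}\sqn{x_f^{k+1}-x_g^k} = \tfrac{L_x\sigma_x^2}{2}\sqn{x^{k+1}-x^k}$, supplying the $L_x\sigma_x\sqn{x^{k+1}-x^k}$ contribution after multiplication by $\tfrac{2}{\sigma_x}$; convexity gives $\bg_f(x_f^k, x_g^k)\geq 0$, so the corresponding term (with the right sign since $\sigma_x\leq 2$) is dropped. The coefficients line up so that $\bg_f(x_g^k, x^*)$, $\bg_f(x_f^{k+1}, x^*)$, and $\bg_f(x_f^k, x^*)$ appear with weights $-1$, $-\tfrac{2}{\sigma_x}$, and $+(\tfrac{2}{\sigma_x}-1)$ respectively, while $\bg_f(x^*, x_g^k)$ accumulates coefficient exactly $-2$. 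Strong convexity $2\bg_f(x^*, x_g^k) \geq \mu_x\sqn{x^*-x_g^k}$ then cancels the $+\mu_x\sqn{x_g^k-x^*}$ reserved in the previous step without residue.

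\textbf{Coupling term and assembly.} For the $\beta_x$ inner product, I apply the polarization identity $-2\langle a,b\rangle = -\sqn{a} - \sqn{b} + \sqn{a-b}$ to the $\mA(x^k-x^*)$ half, so that \Cref{ass:A} supplies $-\sqn{\mA(x^k-x^*)} \leq -\mu_{yx}^2\sqn{x^k-x^*}$ and $\sqn{\mA(x^{k+1}-x^k)} \leq L_{xy}^2\sqn{x^{k+1}-x^k}$. Young's inequality with constant $1$ on the cross term $2\beta_x\langle\nabla g(y_g^k)-\nabla g(y^*), \mA(x^{k+1}-x^*)\rangle$ produces $\beta_x\sqn{\nabla g(y_g^k)-\nabla g(y^*)} + \beta_x\sqn{\mA(x^{k+1}-x^*)}$, where the second piece cancels precisely against the $-\beta_x\sqn{\mA(x^{k+1}-x^*)}$ retained from the first half. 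The choice $\beta_x \leq 1/(2L_y)$, combined with the standard $L_y$-smoothness consequence $\sqn{\nabla g(y_g^k)-\nabla g(y^*)} \leq 2L_y\bg_g(y_g^k,y^*)$, produces the target $\bg_g(y_g^k,y^*)$ term; the choice $\beta_x \leq 1/(2\eta_x L_{xy}^2)$ ensures $\beta_x L_{xy}^2\sqn{x^{k+1}-x^k} \leq \tfrac{1}{2\eta_x}\sqn{x^{k+1}-x^k}$; both bounds are secured by $\beta_x = \min\{1/(2L_y), 1/(2\eta_x L_{xy}^2)\}$. Assembling the four pieces and the two leftover quadratic identities reproduces~\eqref{apd:eq:x} exactly. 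The main obstacle is the Nesterov/Bregman bookkeeping in the third paragraph: verifying that the specific choice $\tau_x = (\sigma_x^{-1}+1/2)^{-1}$ is precisely what forces the $\bg_f(x^*,x_g^k)$ coefficient to come out to $-2$, so that strong convexity cancels the reserved $+\mu_x\sqn{x_g^k-x^*}$ without introducing any extraneous residual terms.
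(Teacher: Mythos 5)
Your proof is correct and follows essentially the same route as the paper's: expand the $x$-update via the polarization identity, use $\alpha_x=\mu_x$ with the parallelogram rule, split out the $\beta_x\mA^\top(\cdot)$ term with the optimality condition $\nabla g(y^*)=\mA x^*$ and bound it with Assumption~3 and $L_y$-smoothness, split $-2\langle\nabla f(x_g^k)+\mA^\top y_m^k,\cdot\rangle$ with $\nabla f(x^*)+\mA^\top y^*=0$, and then do the Nesterov/Bregman accounting. The only cosmetic difference is in that last accounting: you derive the single decomposition $x^{k+1}-x^*=\tfrac12(x_g^k-x^*)+\tfrac{1}{\sigma_x}(x_f^{k+1}-x^*)-(\tfrac{1}{\sigma_x}-\tfrac12)(x_f^k-x^*)$ and apply three-point Bregman identities, while the paper decomposes $x^{k+1}-x^*=(x^{k+1}-x^k)+(x^k-x_g^k)+(x_g^k-x^*)$ and uses convexity/smoothness of $x\mapsto\bg_f(x,x^*)$ directly, with the $\tau_x$ constraint collapsing the coefficients only at the end; the two routes are algebraically equivalent and produce identical terms (and your use of Young's inequality in place of the exact polarization identity on the $\beta_x$ cross term only sacrifices a nonpositive term the paper drops anyway).
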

\begin{proof}
	Using Line~\ref{apd:line:x:2} of the Algorithm~\ref{apd:alg} we get
	\begin{align*}
		\frac{1}{\eta_x}\sqn{x^{k+1} - x^*}
		&=
		\frac{1}{\eta_x}\sqn{x^{k} - x^*} + \frac{2}{\eta_x}\<x^{k+1} - x^k,x^{k+1} - x^*> - \frac{1}{\eta_x}\sqn{x^{k+1} - x^k}
		\\&=
		\frac{1}{\eta_x}\sqn{x^{k} - x^*} + 2\alpha_x\<x_g^k - x^k,x^{k+1}- x^*>
		-
		2\beta_x\<\mA^\top(\mA x^k - \nabla g(y_g^k)),x^{k+1} - x^*>
		\\&-
		2\<\nabla f(x_g^k) + \mA^\top y_m^k,x^{k+1} - x^*>
		-
		\frac{1}{\eta_x}\sqn{x^{k+1} - x^k}.
	\end{align*}
	Using the parallelogram rule we get
	\begin{align*}
		\frac{1}{\eta_x}\sqn{x^{k+1} - x^*}
		&=
		\frac{1}{\eta_x}\sqn{x^{k} - x^*}
		+
		\alpha_x\left(\sqn{x_g^k - x^*}  - \sqn{x_g^k - x^{k+1}} - \sqn{x^{k} - x^*}+\sqn{x^{k+1} - x^k}\right)
		\\&-
		2\beta_x\<\mA x^k - \nabla g(y_g^k),\mA(x^{k+1} - x^*)>
		-
		2\<\nabla f(x_g^k) + \mA^\top y_m^k,x^{k+1} - x^*>
		-
		\frac{1}{\eta_x}\sqn{x^{k+1} - x^k}.
	\end{align*}
	Using the optimality condition $\nabla g(y^*) = \mA x^*$, which follows from \eqref{eq:opt}, and the parallelogram rule we get
	\begin{align*}
		\frac{1}{\eta_x}\sqn{x^{k+1} - x^*}
		&=
		\frac{1}{\eta_x}\sqn{x^{k} - x^*}
		+
		\alpha_x\left(\sqn{x_g^k - x^*}  - \sqn{x_g^k - x^{k+1}} - \sqn{x^{k} - x^*}+\sqn{x^{k+1} - x^k}\right)
		\\&+
		\beta_x\left(\sqn{\mA(x^{k+1} - x^k)}  - \sqn{\mA(x^k - x^*)} + \sqn{\nabla g(y_g^k) - \nabla g(y^*)} - \sqn{\nabla g (y_g^k) - \mA(x^{k+1})}\right)
		\\&-
		2\<\nabla f(x_g^k) + \mA^\top y_m^k,x^{k+1} - x^*>
		-
		\frac{1}{\eta_x}\sqn{x^{k+1} - x^k}.
	\end{align*}
	Using Assumption~\ref{ass:A}, equation~\ref{apd:eq:mu:2} and $L_y$-smoothness of $g$ we get
	\begin{align*}
		\frac{1}{\eta_x}\sqn{x^{k+1} - x^*}
		&\leq
		\frac{1}{\eta_x}\sqn{x^{k} - x^*}
		+
		\alpha_x\sqn{x_g^k - x^*}  - \alpha_x\sqn{x^{k} - x^*}+\alpha_x\sqn{x^{k+1} - x^k}
		\\&+
		\beta_xL_{xy}^2\sqn{x^{k+1} - x^k}  - \beta_x\mu_{yx}^2\sqn{x^k - x^*} + 2\beta_xL_y\bg_g(y_g^k,y^*)
		\\&-
		2\<\nabla f(x_g^k) + \mA^\top y_m^k,x^{k+1} - x^*>
		-
		\frac{1}{\eta_x}\sqn{x^{k+1} - x^k}
		\\&=
		\left(\frac{1}{\eta_x} - \alpha_x - \beta_x\mu_{yx}^2\right)\sqn{x^{k} - x^*}
		+
		\left(\beta_xL_{xy}^2 + \alpha_x-\frac{1}{\eta_x}\right)
		\sqn{x^{k+1} - x^k}
		\\&+
		2\beta_xL_y\bg_g(y_g^k,y^*)
		+
		\alpha_x\sqn{x_g^k - x^*}
		-
		2\<\nabla f(x_g^k) + \mA^\top y_m^k,x^{k+1} - x^*>.
	\end{align*}
	Using the optimality condition $\nabla f(x^*) + \mA^\top y^* = 0$, which follows from \eqref{eq:opt}, we get
	\begin{align*}
		\frac{1}{\eta_x}\sqn{x^{k+1} - x^*}
		&\leq
		\left(\frac{1}{\eta_x} - \alpha_x - \beta_x\mu_{yx}^2\right)\sqn{x^{k} - x^*}
		+
		\left(\beta_xL_{xy}^2 + \alpha_x-\frac{1}{\eta_x}\right)
		\sqn{x^{k+1} - x^k}
		+
		2\beta_xL_y\bg_g(y_g^k,y^*)
		\\&+
		\alpha_x\sqn{x_g^k - x^*}
		-
		2\<\nabla f(x_g^k) - \nabla f(x^*),x^{k+1} - x^*>
		-
		2\<\mA^\top (y_m^k - y^*),x^{k+1} - x^*>
		\\&=
		\left(\frac{1}{\eta_x} - \alpha_x - \beta_x\mu_{yx}^2\right)\sqn{x^{k} - x^*}
		+
		\left(\beta_xL_{xy}^2 + \alpha_x-\frac{1}{\eta_x}\right)
		\sqn{x^{k+1} - x^k}
		\\&+
		2\beta_xL_y\bg_g(y_g^k,y^*)
		+
		\alpha_x\sqn{x_g^k - x^*}
		-
		2\<\nabla f(x_g^k) - \nabla f(x^*),x^{k+1}- x^k + x^k - x_g^k + x_g^k - x^*>
		\\&-
		2\<\mA^\top (y_m^k - y^*),x^{k+1} - x^*>
	\end{align*}
	Using $\mu_y$-strong convexity of $f$ and Lines~\ref{apd:line:x:1} and~\ref{apd:line:x:3} of the Algorithm~\ref{apd:alg} we get
	\begin{align*}
		\frac{1}{\eta_x}\sqn{x^{k+1} - x^*}
		&\leq
		\left(\frac{1}{\eta_x} - \alpha_x - \beta_x\mu_{yx}^2\right)\sqn{x^{k} - x^*}
		+
		\left(\beta_xL_{xy}^2 + \alpha_x-\frac{1}{\eta_x}\right)
		\sqn{x^{k+1} - x^k}
		+
		2\beta_xL_y\bg_g(y_g^k,y^*)
		\\&+
		\alpha_x\sqn{x_g^k - x^*}
		-
		\frac{2}{\sigma_x}\<\nabla f(x_g^k) - \nabla f(x^*),x_f^{k+1}- x_g^k>
		+
		\frac{2(1-\tau_x)}{\tau_x}\<\nabla f(x_g^k) - \nabla f(x^*),x_f^{k}- x_g^k>
		\\&-
		2\bg_f(x_g^k,x^*) - \mu_x\sqn{x_g^k -x^*}
		-
		2\<\mA^\top (y_m^k - y^*),x^{k+1} - x^*>
		\\&=
		\left(\frac{1}{\eta_x} - \alpha_x - \beta_x\mu_{yx}^2\right)\sqn{x^{k} - x^*}
		+
		\left(\beta_xL_{xy}^2 + \alpha_x-\frac{1}{\eta_x}\right)
		\sqn{x^{k+1} - x^k}
		+
		(\alpha_x-\mu_x)\sqn{x_g^k - x^*}
		\\&+
		2\beta_xL_y\bg_g(y_g^k,y^*)
		-
		2\bg_f(x_g^k,x^*) 
		-
		\frac{2}{\sigma_x}\<\nabla f(x_g^k) - \nabla f(x^*),x_f^{k+1}- x_g^k>
		\\&+
		\frac{2(1-\tau_x)}{\tau_x}\<\nabla f(x_g^k) - \nabla f(x^*),x_f^{k}- x_g^k>
		-
		2\<\mA^\top (y_m^k - y^*),x^{k+1} - x^*>.
	\end{align*}
	Using convexity of $\bg_f(x,x^*)$ with respect to $x$, which follows from the convexity of $f$, we get
	\begin{align*}
		\frac{1}{\eta_x}\sqn{x^{k+1} - x^*}
		&\leq
		\left(\frac{1}{\eta_x} - \alpha_x - \beta_x\mu_{yx}^2\right)\sqn{x^{k} - x^*}
		+
		\left(\beta_xL_{xy}^2 + \alpha_x-\frac{1}{\eta_x}\right)
		\sqn{x^{k+1} - x^k}
		+
		(\alpha_x-\mu_x)\sqn{x_g^k - x^*}
		\\&+
		2\beta_xL_y\bg_g(y_g^k,y^*)
		-
		2\bg_f(x_g^k,x^*) 
		-
		\frac{2}{\sigma_x}\<\nabla f(x_g^k) - \nabla f(x^*),x_f^{k+1}- x_g^k>
		\\&+
		\frac{2(1-\tau_x)}{\tau_x}\left(\bg_f(x_f^k,x^*) - \bg_f(x_g^k,x^*)\right)
		-
		2\<\mA^\top (y_m^k - y^*),x^{k+1} - x^*>.
	\end{align*}
	Using $L_x$-smoothness of $\bg_f(x,x^*)$ with respect to $x$, which follows from the $L_x$-smoothness of $f$, we get
	\begin{align*}
		\frac{1}{\eta_x}\sqn{x^{k+1} - x^*}
		&\leq
		\left(\frac{1}{\eta_x} - \alpha_x - \beta_x\mu_{yx}^2\right)\sqn{x^{k} - x^*}
		+
		\left(\beta_xL_{xy}^2 + \alpha_x-\frac{1}{\eta_x}\right)
		\sqn{x^{k+1} - x^k}
		+
		(\alpha_x-\mu_x)\sqn{x_g^k - x^*}
		\\&+
		2\beta_xL_y\bg_g(y_g^k,y^*)
		-
		2\bg_f(x_g^k,x^*) 
		-
		\frac{2}{\sigma_x}\left(\bg_f(x_f^{k+1},x^*) - \bg_f(x_g^k,x^*) - \frac{L_x}{2}\sqn{x_f^{k+1} - x_g^k}\right)
		\\&+
		\frac{2(1-\tau_x)}{\tau_x}\left(\bg_f(x_f^k,x^*) - \bg_f(x_g^k,x^*)\right)
		-
		2\<\mA^\top (y_m^k - y^*),x^{k+1} - x^*>.
	\end{align*}
	Using Line~\ref{apd:line:x:3} of the Algorithm~\ref{apd:alg} we get
	\begin{align*}
		\frac{1}{\eta_x}\sqn{x^{k+1} - x^*}
		&\leq
		\left(\frac{1}{\eta_x} - \alpha_x - \beta_x\mu_{yx}^2\right)\sqn{x^{k} - x^*}
		+
		\left(\beta_xL_{xy}^2 + \alpha_x-\frac{1}{\eta_x}\right)
		\sqn{x^{k+1} - x^k}
		+
		(\alpha_x-\mu_x)\sqn{x_g^k - x^*}
		\\&+
		2\beta_xL_y\bg_g(y_g^k,y^*)
		-
		2\bg_f(x_g^k,x^*) 
		-
		\frac{2}{\sigma_x}\left(\bg_f(x_f^{k+1},x^*) - \bg_f(x_g^k,x^*) - \frac{L_x\sigma_x^2}{2}\sqn{x^{k+1} - x^k}\right)
		\\&+
		\frac{2(1-\tau_x)}{\tau_x}\left(\bg_f(x_f^k,x^*) - \bg_f(x_g^k,x^*)\right)
		-
		2\<\mA^\top (y_m^k - y^*),x^{k+1} - x^*>
		\\&=
		\left(\frac{1}{\eta_x} - \alpha_x - \beta_x\mu_{yx}^2\right)\sqn{x^{k} - x^*}
		+
		\left(\beta_xL_{xy}^2 + \alpha_x + L_x\sigma_x -\frac{1}{\eta_x}\right)
		\sqn{x^{k+1} - x^k}
		\\&+
		(\alpha_x-\mu_x)\sqn{x_g^k - x^*}
		+
		2\beta_xL_y\bg_g(y_g^k,y^*)
		+
		\left(\frac{2}{\sigma_x} - \frac{2}{\tau_x}\right)\bg_f(x_g^k,x^*) 
		-
		\frac{2}{\sigma_x}\bg_f(x_f^{k+1},x^*) 
		\\&+
		\left(\frac{2}{\tau_x} - 2\right)\bg_f(x_f^k,x^*) 
		-
		2\<\mA^\top (y_m^k - y^*),x^{k+1} - x^*>.
	\end{align*}
	Using the definition of $\tau_x$, $\alpha_x$ and $\beta_x$ we get
	\begin{align*}
		\frac{1}{\eta_x}\sqn{x^{k+1} - x^*}
		&\leq
		\left(\frac{1}{\eta_x} - \mu_x - \beta_x\mu_{yx}^2\right)\sqn{x^{k} - x^*}
		+
		\left(\mu_x + L_x\sigma_x -\frac{1}{2\eta_x}\right)
		\sqn{x^{k+1} - x^k}
		\\&
		+
		\bg_g(y_g^k,y^*)
		-
		\bg_f(x_g^k,x^*) 
		-
		\frac{2}{\sigma_x}\bg_f(x_f^{k+1},x^*) 
		+
		\left(\frac{2}{\sigma_x} - 1\right)\bg_f(x_f^k,x^*) 
		\\&-
		2\<\mA^\top (y_m^k - y^*),x^{k+1} - x^*>.
	\end{align*}
\end{proof}

\begin{lemma}
	Let $\tau_y$ be defined as
	\begin{equation}
		\tau_y = (\sigma_y^{-1} + 1/2)^{-1}.
	\end{equation}
	Let $\alpha_y$ be defined as
	\begin{equation}
		\alpha_y = \mu_y.
	\end{equation}
	Let $\beta_y$ be defined as
	\begin{equation}
		\beta_y = \min\left\{\frac{1}{2L_x}, \frac{1}{2\eta_y L_{xy}^2}\right\}.
	\end{equation}
	Then, the following inequality holds:
	\begin{equation}\label{apd:eq:y}
		\begin{split}
			\frac{1}{\eta_y}\sqn{y^{k+1} - y^*}
			&\leq
			\left(\frac{1}{\eta_y} - \mu_y - \beta_y\mu_{xy}^2\right)\sqn{y^{k} - y^*}
			+
			\left(\mu_y + L_y\sigma_y -\frac{1}{2\eta_y}\right)
			\sqn{y^{k+1} - y^k}
			\\&
			+
			\bg_f(x_g^k,x^*)
			-
			\bg_g(y_g^k,y^*) 
			-
			\frac{2}{\sigma_y}\bg_g(y_f^{k+1},y^*) 
			+
			\left(\frac{2}{\sigma_y} - 1\right)\bg_g(y_f^k,y^*) 
			\\&+
			2\<\mA(x^{k+1} - x^*),y^{k+1} - y^*>.
		\end{split}
	\end{equation}
\end{lemma}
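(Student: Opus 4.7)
The proof will mirror the derivation of \eqref{apd:eq:x} line by line with the roles of $x$ and $y$ exchanged. Two structural features distinguish the $y$-case: the update rule uses $\mA x^{k+1}$ in place of an extrapolated dual variable, so the final leftover cross-term will appear as $+2\<\mA(x^{k+1}-x^*),y^{k+1}-y^*>$ rather than $-2\<\mA^\top(y_m^k-y^*),x^{k+1}-x^*>$; and the two optimality conditions in \eqref{eq:opt} swap roles in the various substitutions.

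I would start from the identity $\tfrac{1}{\eta_y}\sqn{y^{k+1}-y^*} = \tfrac{1}{\eta_y}\sqn{y^k-y^*} + \tfrac{2}{\eta_y}\<y^{k+1}-y^k,y^{k+1}-y^*> - \tfrac{1}{\eta_y}\sqn{y^{k+1}-y^k}$ and substitute Line~9 of \Cref{apd:alg}. The inner product splits into three contributions, handled in sequence. The $\alpha_y$-momentum contribution is handled by the standard parallelogram rule. For the $\beta_y$-preconditioned contribution I use the optimality condition $\nabla f(x^*)+\mA^\top y^*=0$ from \eqref{eq:opt} to rewrite $\mA^\top y^k+\nabla f(x_g^k)$ as $\mA^\top(y^k-y^*)+(\nabla f(x_g^k)-\nabla f(x^*))$ and apply two parallelogram identities. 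This produces $\sqn{\mA^\top(y^{k+1}-y^k)}$, bounded by $L_{xy}^2\sqn{y^{k+1}-y^k}$ via \Cref{ass:A}; $-\sqn{\mA^\top(y^k-y^*)}$, bounded by $-\mu_{xy}^2\sqn{y^k-y^*}$ via \eqref{apd:eq:mu:2}; $\sqn{\nabla f(x_g^k)-\nabla f(x^*)}\leq 2L_x\bg_f(x_g^k,x^*)$ via $L_x$-smoothness of $f$; and a nonpositive residual $-\sqn{\nabla f(x_g^k)+\mA^\top y^{k+1}}$ that is simply dropped.

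For the primal-dual gradient contribution I use $\nabla g(y^*)=\mA x^*$ to decompose $\nabla g(y_g^k)-\mA x^{k+1}$ as $(\nabla g(y_g^k)-\nabla g(y^*))-\mA(x^{k+1}-x^*)$. The $\mA(x^{k+1}-x^*)$ piece is kept aside and produces the cross-term $+2\<\mA(x^{k+1}-x^*),y^{k+1}-y^*>$ appearing in the conclusion. For the remaining piece I decompose $y^{k+1}-y^*=(y^{k+1}-y^k)+(y^k-y_g^k)+(y_g^k-y^*)$ and bound each summand: $\mu_y$-strong convexity of $g$ on $(y_g^k-y^*)$ yields $-2\bg_g(y_g^k,y^*)-\mu_y\sqn{y_g^k-y^*}$; the identity $y^k-y_g^k=\tfrac{1-\tau_y}{\tau_y}(y_g^k-y_f^k)$ combined with convexity of $\bg_g(\cdot,y^*)$ yields $\tfrac{2(1-\tau_y)}{\tau_y}(\bg_g(y_f^k,y^*)-\bg_g(y_g^k,y^*))$; and the identity $y^{k+1}-y^k=\sigma_y^{-1}(y_f^{k+1}-y_g^k)$ combined with $L_y$-smoothness of $\bg_g(\cdot,y^*)$ yields $\tfrac{2}{\sigma_y}(\bg_g(y_g^k,y^*)-\bg_g(y_f^{k+1},y^*))+L_y\sigma_y\sqn{y^{k+1}-y^k}$.

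Substituting the stated definitions $\tau_y=(\sigma_y^{-1}+1/2)^{-1}$, $\alpha_y=\mu_y$, and $\beta_y=\min\{1/(2L_x),1/(2\eta_y L_{xy}^2)\}$ collapses the algebra: the $\bg_g(y_g^k,y^*)$ coefficient becomes $-2/\tau_y+2/\sigma_y=-1$; the $\bg_g(y_f^k,y^*)$ coefficient becomes $2/\sigma_y-1$; the $\bg_f(x_g^k,x^*)$ coefficient $2\beta_y L_x$ is bounded above by $1$; $(\alpha_y-\mu_y)\sqn{y_g^k-y^*}$ vanishes; the $\sqn{y^{k+1}-y^k}$ coefficient reduces from $\alpha_y+\beta_y L_{xy}^2+L_y\sigma_y-1/\eta_y$ to $\mu_y+L_y\sigma_y-1/(2\eta_y)$; and the negative term $-\alpha_y\sqn{y_g^k-y^{k+1}}$ is discarded. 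This yields exactly \eqref{apd:eq:y}. The main technical hurdle is choosing the split of $\mA^\top y^k+\nabla f(x_g^k)$ in the $\beta_y$-contribution: it must simultaneously produce the $-\sqn{\mA^\top(y^k-y^*)}$ factor required by \eqref{apd:eq:mu:2} to contribute contractivity in $\sqn{y^k-y^*}$, and a nonpositive residual $-\sqn{\nabla f(x_g^k)+\mA^\top y^{k+1}}$ that can be discarded rather than controlled.
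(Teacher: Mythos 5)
Your proposal is correct and is exactly the mirroring of the paper's preceding lemma that the authors indicate when they write ``the proof is similar.'' All the structurally distinctive points of the $y$-case—rewriting $\mA^\top y^k + \nabla f(x_g^k)$ via $\nabla f(x^*)=-\mA^\top y^*$ so that the parallelogram identity produces the discardable residual $-\sqn{\nabla f(x_g^k)+\mA^\top y^{k+1}}$, and splitting $\nabla g(y_g^k)-\mA x^{k+1}$ to leave the $+2\<\mA(x^{k+1}-x^*),y^{k+1}-y^*>$ cross-term rather than an $\mA^\top$ one involving an extrapolated variable—are identified and handled correctly, and the coefficient collapse under $\tau_y=(\sigma_y^{-1}+1/2)^{-1}$, $\alpha_y=\mu_y$, $\beta_y=\min\{1/(2L_x),1/(2\eta_y L_{xy}^2)\}$ matches the stated inequality.
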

\begin{proof}
	The proof is similar to the proof of the previous lemma.
\end{proof}

\begin{lemma}
	Let $\eta_x$ be defined as
	\begin{equation}
		\eta_x = \min\left\{\frac{1}{4(\mu_x + L_x\sigma_x)},\frac{\delta}{4L_{xy}}\right\},
		%		\eta_x = \min\left\{\frac{1}{4(\mu_x + L_x\sigma_x)},\frac{1}{4L_{xy}}\sqrt{\frac{\max\left\{\mu_y,\mu_{xy}^2/(2L_x)\right\}}{\max\left\{\mu_x,\mu_{yx}^2/(2L_y)\right\}}}\right\}
	\end{equation}
	and let $\eta_y$ be defined as
	\begin{equation}
		\eta_y = \min\left\{\frac{1}{4(\mu_y + L_y\sigma_y)},\frac{1}{4L_{xy}\delta}\right\},
		%	\eta_y = \min\left\{\frac{1}{4(\mu_y + L_y\sigma_y)},\frac{1}{4L_{xy}}\sqrt{\frac{\max\left\{\mu_x,\mu_{yx}^2/(2L_y)\right\}}{\max\left\{\mu_y,\mu_{xy}^2/(2L_x)\right\}}}\right\}
	\end{equation}
	where $\delta > 0$ is a parameter.
	Let $\theta$ be defined as
	\begin{equation}\label{apd:theta}
		\theta = \theta(\delta,\sigma_x,\sigma_y) = 1 - \max\left\{\rho_a(\delta,\sigma_x,\sigma_y),\rho_b(\delta,\sigma_x,\sigma_y),\rho_c(\delta,\sigma_x,\sigma_y),\rho_d(\delta,\sigma_x,\sigma_y)\right\},
		%		\theta=1 - \left[\min\left\{
		%		\max\left\{\frac{1}{\eta_x\mu_x},\frac{2}{\sigma_x},\frac{1}{\eta_x\mu_x},\frac{2}{\sigma_x}\right\}
		%		\right\}\right]^{-1}
	\end{equation}
	where $\rho_a(\delta,\sigma_x,\sigma_y),\rho_b(\delta,\sigma_x,\sigma_y),\rho_c(\delta,\sigma_x,\sigma_y),\rho_d(\delta,\sigma_x,\sigma_y)$ are defined as 
	%	\begin{equation}
		%		\rho_a(\delta,\sigma_x,\sigma_y) = \left[\max\left\{\frac{4(\mu_x + L_x\sigma_x)}{\mu_x},\frac{2}{\sigma_x},\frac{4(\mu_y + L_y\sigma_y)}{\mu_y},\frac{2}{\sigma_y},\frac{4L_{xy}}{\mu_x\delta},\frac{4L_{xy}\delta}{\mu_y}\right\}\right]^{-1},
		%	\end{equation}
	%	\begin{equation}
		%	\rho_b(\delta,\sigma_x,\sigma_y) = \left[\max\left\{
		%	\frac{4(\mu_x + L_x\sigma_x)}{\mu_x},
		%	\frac{2}{\sigma_x},
		%	\frac{8L_x(\mu_y + L_y\sigma_y)}{\mu_{xy}^2},
		%	\frac{2}{\sigma_y},
		%	\frac{2L_{xy}^2}{\mu_{xy}^2},
		%	\frac{8L_xL_{xy}\delta}{\mu_{xy}^2},
		%	\frac{4L_{xy}}{\mu_x\delta}
		%	\right\}\right]^{-1},
		%	\end{equation}
	%	\begin{equation}
		%	\rho_c(\delta,\sigma_x,\sigma_y) = \left[\max\left\{
		%	\frac{4(\mu_y + L_y\sigma_y)}{\mu_y},
		%	\frac{2}{\sigma_y},
		%	\frac{8L_y(\mu_x + L_x\sigma_x)}{\mu_{yx}^2},
		%	\frac{2}{\sigma_x},
		%	\frac{2L_{xy}^2}{\mu_{yx}^2},
		%	\frac{8L_yL_{xy}}{\mu_{yx}^2\delta},
		%	\frac{4L_{xy}\delta}{\mu_y}
		%	\right\}\right]^{-1},
		%	\end{equation}
	%	\begin{equation}
		%	\rho_d(\delta,\sigma_x,\sigma_y) = \left[\max\left\{
		%	\frac{8L_y(\mu_x + L_x\sigma_x)}{\mu_{yx}^2},
		%	\frac{2}{\sigma_x},
		%	\frac{8L_x(\mu_y + L_y\sigma_y)}{\mu_{xy}^2},
		%	\frac{2}{\sigma_y},
		%	\frac{8L_yL_{xy}}{\delta\mu_{yx}^2},
		%	\frac{8L_xL_{xy}\delta}{\mu_{xy}^2},
		%	\frac{2L_{xy}^2}{\mu_{yx}^2},
		%	\frac{2L_{xy}^2}{\mu_{xy}^2}
		%	\right\}\right]^{-1}.
		%	\end{equation}
	\begin{align}
		\rho_a(\delta,\sigma_x,\sigma_y) &= \left[\max\left\{\frac{4(\mu_x + L_x\sigma_x)}{\mu_x},\frac{2}{\sigma_x},\frac{4(\mu_y + L_y\sigma_y)}{\mu_y},\frac{2}{\sigma_y},\frac{4L_{xy}}{\mu_x\delta},\frac{4L_{xy}\delta}{\mu_y}\right\}\right]^{-1},\\
		\rho_b(\delta,\sigma_x,\sigma_y) &= \left[\max\left\{
		\frac{4(\mu_x + L_x\sigma_x)}{\mu_x},
		\frac{2}{\sigma_x},
		\frac{8L_x(\mu_y + L_y\sigma_y)}{\mu_{xy}^2},
		\frac{2}{\sigma_y},
		\frac{2L_{xy}^2}{\mu_{xy}^2},
		\frac{8L_xL_{xy}\delta}{\mu_{xy}^2},
		\frac{4L_{xy}}{\mu_x\delta}
		\right\}\right]^{-1},\\
		\rho_c(\delta,\sigma_x,\sigma_y) &= \left[\max\left\{
		\frac{4(\mu_y + L_y\sigma_y)}{\mu_y},
		\frac{2}{\sigma_y},
		\frac{8L_y(\mu_x + L_x\sigma_x)}{\mu_{yx}^2},
		\frac{2}{\sigma_x},
		\frac{2L_{xy}^2}{\mu_{yx}^2},
		\frac{8L_yL_{xy}}{\mu_{yx}^2\delta},
		\frac{4L_{xy}\delta}{\mu_y}
		\right\}\right]^{-1},\\
		\rho_d(\delta,\sigma_x,\sigma_y) &= \left[\max\left\{
		\frac{8L_y(\mu_x + L_x\sigma_x)}{\mu_{yx}^2},
		\frac{2}{\sigma_x},
		\frac{8L_x(\mu_y + L_y\sigma_y)}{\mu_{xy}^2},
		\frac{2}{\sigma_y},
		\frac{8L_yL_{xy}}{\delta\mu_{yx}^2},
		\frac{8L_xL_{xy}\delta}{\mu_{xy}^2},
		\frac{2L_{xy}^2}{\mu_{yx}^2},
		\frac{2L_{xy}^2}{\mu_{xy}^2}
		\right\}\right]^{-1}.
	\end{align}
	Let $\Psi^k$ be the following Lyapunov function:
	\begin{equation}
		\begin{split}
			\Psi^k &= 
			\frac{1}{\eta_x}\sqn{x^{k} - x^*}
			+
			\frac{1}{\eta_y}\sqn{y^{k} - y^*}
			+
			\frac{2}{\sigma_x}\bg_f(x_f^k,x^*) 
			+
			\frac{2}{\sigma_y}\bg_g(y_f^k,y^*)
			\\&+
			\frac{1}{4\eta_y}\sqn{y^k - y^{k-1}}
			-
			2\<y^k -y^{k-1},\mA(x^{k} - x^*)>.
		\end{split}
	\end{equation}
	Then, the following inequalities hold
	\begin{equation}\label{apd:eq:1}
		\Psi^k \geq \frac{3}{4\eta_x}\sqn{x^{k} - x^*}
		+
		\frac{1}{\eta_y}\sqn{y^{k} - y^*},
	\end{equation}
	\begin{equation}\label{apd:eq:2}
		\Psi^{k+1} \leq \theta\Psi^k.
	\end{equation}
\end{lemma}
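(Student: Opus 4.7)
The plan is to establish the two claims separately, since the lower bound (\ref{apd:eq:1}) is essentially a Young-inequality check while the contraction (\ref{apd:eq:2}) requires adding the two per-variable descent inequalities (\ref{apd:eq:x}), (\ref{apd:eq:y}) and bookkeeping the couplings. First I would prove (\ref{apd:eq:1}) by applying Young's inequality to the cross term in $\Psi^k$:
\begin{equation*}
\bigl|2\<y^k-y^{k-1},\mA(x^k-x^*)>\bigr| \leq \tfrac{1}{4\eta_y}\sqn{y^k-y^{k-1}} + 4\eta_y\sqn{\mA(x^k-x^*)},
\end{equation*}
then bounding $\sqn{\mA(x^k-x^*)}\leq L_{xy}^2\sqn{x^k-x^*}$ (\Cref{ass:A}). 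The step size choices give $\eta_x\eta_y\leq \tfrac{1}{16L_{xy}^2}$, hence $4\eta_y L_{xy}^2\leq \tfrac{1}{4\eta_x}$, which leaves residual $\tfrac{1}{\eta_x}-\tfrac{1}{4\eta_x}=\tfrac{3}{4\eta_x}$ on $\sqn{x^k-x^*}$. The Bregman terms in $\Psi^k$ are non-negative by convexity of $f$ and $g$, so they may be discarded.

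For (\ref{apd:eq:2}) I would sum (\ref{apd:eq:x}) and (\ref{apd:eq:y}), observing the clean cancellation of the Bregman terms $\bg_g(y_g^k,y^*)$ and $\bg_f(x_g^k,x^*)$ which appear with opposite signs in the two estimates. The remaining coupling simplifies via
\begin{equation*}
-2\<\mA^\top(y_m^k-y^*),x^{k+1}-x^*> + 2\<\mA(x^{k+1}-x^*),y^{k+1}-y^*> = 2\<\mA(x^{k+1}-x^*),y^{k+1}-y_m^k>,
\end{equation*}
and inserting $y_m^k=y^k+\theta(y^k-y^{k-1})$ splits this into a telescoping piece $2\<\mA(x^{k+1}-x^*),y^{k+1}-y^k>$ and the extrapolation residual $-2\theta\<\mA(x^{k+1}-x^*),y^k-y^{k-1}>$. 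The telescoping piece is exactly cancelled by the $-2\<y^{k+1}-y^k,\mA(x^{k+1}-x^*)>$ term that appears when I form $\Psi^{k+1}$, and combining the residual with the $+2\theta\<y^k-y^{k-1},\mA(x^k-x^*)>$ contribution from $-\theta\Psi^k$ yields the single residual $-2\theta\<y^k-y^{k-1},\mA(x^{k+1}-x^k)>$.

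This residual is then absorbed by Young's inequality, distributing it between the $\sqn{x^{k+1}-x^k}$ and $\sqn{y^k-y^{k-1}}$ budgets:
\begin{equation*}
\bigl|2\theta\<y^k-y^{k-1},\mA(x^{k+1}-x^k)>\bigr| \leq 4\eta_y L_{xy}^2\theta\sqn{x^{k+1}-x^k}+\tfrac{\theta}{4\eta_y}\sqn{y^k-y^{k-1}}.
\end{equation*}
The second term is exactly what $-\theta\Psi^k$ provides through $-\tfrac{\theta}{4\eta_y}\sqn{y^k-y^{k-1}}$; the first is absorbed by the coefficient of $\sqn{x^{k+1}-x^k}$ from (\ref{apd:eq:x}), which the choice $\eta_x\leq \tfrac{1}{4(\mu_x+L_x\sigma_x)}$ makes at most $-\tfrac{1}{4\eta_x}$, and again $4\eta_y L_{xy}^2\theta\leq \tfrac{1}{4\eta_x}$ follows from $16\eta_x\eta_y L_{xy}^2\leq 1$. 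The $\sqn{y^{k+1}-y^k}$ coefficient from (\ref{apd:eq:y}) is $\leq -\tfrac{1}{4\eta_y}$ by $\eta_y\leq \tfrac{1}{4(\mu_y+L_y\sigma_y)}$, which precisely cancels the $+\tfrac{1}{4\eta_y}\sqn{y^{k+1}-y^k}$ carried by $\Psi^{k+1}$.

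After all absorptions, the contraction $\Psi^{k+1}\leq \theta\Psi^k$ reduces to the four pointwise coefficient conditions
\begin{equation*}
1-\theta \leq \eta_x(\mu_x+\beta_x\mu_{yx}^2),\quad 1-\theta \leq \eta_y(\mu_y+\beta_y\mu_{xy}^2),\quad 1-\theta \leq \tfrac{\sigma_x}{2},\quad 1-\theta \leq \tfrac{\sigma_y}{2}.
\end{equation*}
Expanding each with the definitions of $\eta_x,\eta_y,\beta_x,\beta_y$ and taking the minimum over the four active regimes (depending on which of $\mu_x,\mu_y,\mu_{xy},\mu_{yx}$ drive the contraction) yields precisely the four quantities $\rho_a,\rho_b,\rho_c,\rho_d$; thus $1-\theta=\max\{\rho_a,\rho_b,\rho_c,\rho_d\}$ suffices simultaneously. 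The main obstacle I expect is the bookkeeping: verifying that \emph{every} one of the four reciprocal-of-maximum terms in each $\rho_i$ really corresponds to one of these constraints (the $\eta$-constraints split into a strong-convexity branch and a coupling branch via the $\min$ in the step sizes, and $\beta_x,\beta_y$ themselves split into two branches), so four expressions $\times$ several sub-cases must be exhaustively matched to the definitions of $\rho_a,\ldots,\rho_d$.
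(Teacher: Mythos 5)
Your argument is correct and follows essentially the same route as the paper's proof: sum the two per-variable estimates \eqref{apd:eq:x}, \eqref{apd:eq:y}, exploit the exact cancellation of the $\bg_f(x_g^k,x^*)$ and $\bg_g(y_g^k,y^*)$ terms, use the extrapolation step $y_m^k=y^k+\theta(y^k-y^{k-1})$ to split the bilinear coupling into a telescoping piece (absorbed by the cross term in $\Psi^{k+1}$) plus a residual $-2\theta\<y^k-y^{k-1},\mA(x^{k+1}-x^k)>$ absorbed by Young's inequality, and reduce the contraction to the four coefficient constraints on $\sqn{x^k-x^*}$, $\sqn{y^k-y^*}$, $\bg_f(x_f^k,x^*)$, $\bg_g(y_f^k,y^*)$. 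The individual Young splits are weighted slightly differently than in the paper (the paper first replaces $L_{xy}\le\tfrac{1}{4\sqrt{\eta_x\eta_y}}$ and then applies a symmetric $\tfrac12$-$\tfrac12$ split, whereas you weight by $4\eta_yL_{xy}^2$), but both allocations are valid under $16\eta_x\eta_yL_{xy}^2\le1$, and the residual bookkeeping you defer matches exactly the branch-by-branch expansion the paper performs when unpacking the $\min$'s in $\eta_x,\eta_y,\beta_x,\beta_y$ into the entries of $\rho_a,\ldots,\rho_d$.
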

\begin{proof}
	After adding up \eqref{apd:eq:x} and \eqref{apd:eq:y} we get
	\begin{align*}
		\mathrm{(LHS)}
		&\leq
		\left(\frac{1}{\eta_x} - \mu_x - \beta_x\mu_{yx}^2\right)\sqn{x^{k} - x^*}
		+
		\left(\frac{1}{\eta_y} - \mu_y - \beta_y\mu_{xy}^2\right)\sqn{y^{k} - y^*}
		\\&+
		\left(\mu_x + L_x\sigma_x -\frac{1}{2\eta_x}\right)
		\sqn{x^{k+1} - x^k}
		+
		\left(\mu_y + L_y\sigma_y -\frac{1}{2\eta_y}\right)
		\sqn{y^{k+1} - y^k}
		\\&+		
		\left(\frac{2}{\sigma_x} - 1\right)\bg_f(x_f^k,x^*) 
		+
		\left(\frac{2}{\sigma_y} - 1\right)\bg_g(y_f^k,y^*) 
		+
		2\<y^{k+1} - y_m^k,\mA(x^{k+1} - x^*)>.
	\end{align*}
	where $\mathrm{(LHS)}$ is given as
	\begin{align*}
		\mathrm{(LHS)}&= \frac{1}{\eta_x}\sqn{x^{k+1} - x^*} + \frac{1}{\eta_y}\sqn{y^{k+1} - y^*} + \frac{2}{\sigma_x}\bg_f(x_f^{k+1},x^*) +
		\frac{2}{\sigma_y}\bg_g(y_f^{k+1},y^*) .
	\end{align*}
	Using Line~\ref{apd:line:y:m} of the Algorithm~\ref{apd:alg} and Assumption~\ref{ass:A} we get
	\begin{align*}
		\mathrm{(LHS)}
		&\leq
		\left(\frac{1}{\eta_x} - \mu_x - \beta_x\mu_{yx}^2\right)\sqn{x^{k} - x^*}
		+
		\left(\frac{1}{\eta_y} - \mu_y - \beta_y\mu_{xy}^2\right)\sqn{y^{k} - y^*}
		\\&+
		\left(\mu_x + L_x\sigma_x -\frac{1}{2\eta_x}\right)
		\sqn{x^{k+1} - x^k}
		+
		\left(\mu_y + L_y\sigma_y -\frac{1}{2\eta_y}\right)
		\sqn{y^{k+1} - y^k}
		\\&+		
		\left(\frac{2}{\sigma_x} - 1\right)\bg_f(x_f^k,x^*) 
		+
		\left(\frac{2}{\sigma_y} - 1\right)\bg_g(y_f^k,y^*) 
		\\&+
		2\<y^{k+1} -y^k,\mA(x^{k+1} - x^*)>
		-
		2\theta\<y^k -y^{k-1},\mA(x^{k+1} - x^*)>
		\\&\leq
		\left(\frac{1}{\eta_x} - \mu_x - \beta_x\mu_{yx}^2\right)\sqn{x^{k} - x^*}
		+
		\left(\frac{1}{\eta_y} - \mu_y - \beta_y\mu_{xy}^2\right)\sqn{y^{k} - y^*}
		\\&+
		\left(\mu_x + L_x\sigma_x -\frac{1}{2\eta_x}\right)
		\sqn{x^{k+1} - x^k}
		+
		\left(\mu_y + L_y\sigma_y -\frac{1}{2\eta_y}\right)
		\sqn{y^{k+1} - y^k}
		\\&+		
		\left(\frac{2}{\sigma_x} - 1\right)\bg_f(x_f^k,x^*) 
		+
		\left(\frac{2}{\sigma_y} - 1\right)\bg_g(y_f^k,y^*) 
		\\&+
		2\<y^{k+1} -y^k,\mA(x^{k+1} - x^*)>
		-
		2\theta\<y^k -y^{k-1},\mA(x^{k} - x^*)>
		+
		2\theta L_{xy}\norm{y^k - y^{k-1}}\norm{x^{k+1} - x^k}.
	\end{align*}
	Using the definition of $\eta_x$ and $\eta_y$ and the fact that $\theta< 1$ we get
	\begin{align*}
		\mathrm{(LHS)}
		&\leq
		\left(\frac{1}{\eta_x} - \mu_x - \beta_x\mu_{yx}^2\right)\sqn{x^{k} - x^*}
		+
		\left(\frac{1}{\eta_y} - \mu_y - \beta_y\mu_{xy}^2\right)\sqn{y^{k} - y^*}
		\\&-
		\frac{1}{4\eta_x}
		\sqn{x^{k+1} - x^k}
		-
		\frac{1}{4\eta_y}
		\sqn{y^{k+1} - y^k}
		+		
		\left(\frac{2}{\sigma_x} - 1\right)\bg_f(x_f^k,x^*) 
		+
		\left(\frac{2}{\sigma_y} - 1\right)\bg_g(y_f^k,y^*) 
		\\&+
		2\<y^{k+1} -y^k,\mA(x^{k+1} - x^*)>
		-
		2\theta\<y^k -y^{k-1},\mA(x^{k} - x^*)>
		+
		\frac{\theta }{2\sqrt{\eta_x\eta_y}}\norm{y^k - y^{k-1}}\norm{x^{k+1} - x^k}
		\\&\leq
		\left(\frac{1}{\eta_x} - \mu_x - \beta_x\mu_{yx}^2\right)\sqn{x^{k} - x^*}
		+
		\left(\frac{1}{\eta_y} - \mu_y - \beta_y\mu_{xy}^2\right)\sqn{y^{k} - y^*}
		\\&-
		\frac{1}{4\eta_x}
		\sqn{x^{k+1} - x^k}
		-
		\frac{1}{4\eta_y}
		\sqn{y^{k+1} - y^k}
		+		
		\left(\frac{2}{\sigma_x} - 1\right)\bg_f(x_f^k,x^*) 
		+
		\left(\frac{2}{\sigma_y} - 1\right)\bg_g(y_f^k,y^*) 
		\\&+
		2\<y^{k+1} -y^k,\mA(x^{k+1} - x^*)>
		-
		2\theta\<y^k -y^{k-1},\mA(x^{k} - x^*)>
		+
		\frac{\theta}{4\eta_x}\sqn{x^{k+1} - x^k}
		+
		\frac{\theta}{4\eta_y}\sqn{y^k - y^{k-1}}
		\\&\leq
		\left(\frac{1}{\eta_x} - \mu_x - \beta_x\mu_{yx}^2\right)\sqn{x^{k} - x^*}
		+
		\left(\frac{1}{\eta_y} - \mu_y - \beta_y\mu_{xy}^2\right)\sqn{y^{k} - y^*}
		\\&+
		\frac{\theta}{4\eta_y}\sqn{y^k - y^{k-1}}
		-
		\frac{1}{4\eta_y}
		\sqn{y^{k+1} - y^k}
		+		
		\left(\frac{2}{\sigma_x} - 1\right)\bg_f(x_f^k,x^*) 
		+
		\left(\frac{2}{\sigma_y} - 1\right)\bg_g(y_f^k,y^*) 
		\\&+
		2\<y^{k+1} -y^k,\mA(x^{k+1} - x^*)>
		-
		2\theta\<y^k -y^{k-1},\mA(x^{k} - x^*)>.
	\end{align*}
	Using the definition of $\beta_x$ and $\beta_y$ we get
	\begin{align*}
		\mathrm{(LHS)}
		&\leq
		\left(1 - \eta_x\mu_x - \min\left\{\frac{\eta_x\mu_{yx}^2}{2L_y},\frac{\mu_{yx}^2}{2L_{xy}^2}\right\}\right)\frac{1}{\eta_x}\sqn{x^{k} - x^*}
		+
		\left(1 - \eta_y\mu_y - \min\left\{\frac{\eta_y\mu_{xy}^2}{2L_x},\frac{\mu_{xy}^2}{2L_{xy}^2}\right\}\right)\frac{1}{\eta_y}\sqn{y^{k} - y^*}
		\\&+
		\frac{\theta}{4\eta_y}\sqn{y^k - y^{k-1}}
		-
		\frac{1}{4\eta_y}
		\sqn{y^{k+1} - y^k}
		+		
		\left(\frac{2}{\sigma_x} - 1\right)\bg_f(x_f^k,x^*) 
		+
		\left(\frac{2}{\sigma_y} - 1\right)\bg_g(y_f^k,y^*) 
		\\&+
		2\<y^{k+1} -y^k,\mA(x^{k+1} - x^*)>
		-
		2\theta\<y^k -y^{k-1},\mA(x^{k} - x^*)>
		\\&\leq
		\left(1 - \max\left\{\eta_x\mu_x, \min\left\{\frac{\eta_x\mu_{yx}^2}{2L_y},\frac{\mu_{yx}^2}{2L_{xy}^2}\right\}\right\}\right)\frac{1}{\eta_x}\sqn{x^{k} - x^*}
		\\&+
		\left(1 - \max\left\{\eta_y\mu_y, \min\left\{\frac{\eta_y\mu_{xy}^2}{2L_x},\frac{\mu_{xy}^2}{2L_{xy}^2}\right\}\right\}\right)\frac{1}{\eta_y}\sqn{y^{k} - y^*}
		\\&+
		\frac{\theta}{4\eta_y}\sqn{y^k - y^{k-1}}
		-
		\frac{1}{4\eta_y}
		\sqn{y^{k+1} - y^k}
		+		
		\left(\frac{2}{\sigma_x} - 1\right)\bg_f(x_f^k,x^*) 
		+
		\left(\frac{2}{\sigma_y} - 1\right)\bg_g(y_f^k,y^*) 
		\\&+
		2\<y^{k+1} -y^k,\mA(x^{k+1} - x^*)>
		-
		2\theta\<y^k -y^{k-1},\mA(x^{k} - x^*)>.
		%	\left(1 - \min\left\{\eta_x\max\left\{\mu_x,\frac{\mu_{yx}^2}{2L_y}\right\},\max\left\{\eta_x\mu_x,\frac{\mu_{yx}^2}{2L_{xy}^2}\right\}\right\}\right)\frac{1}{\eta_x}\sqn{x^{k} - x^*}
		%	\\&+
		%	\left(1 - \min\left\{\eta_y\max\left\{\mu_y,\frac{\mu_{xy}^2}{2L_x}\right\},\max \left\{\eta_y\mu_y,\frac{\mu_{xy}^2}{2L_{xy}^2}\right\}\right\}\right)\frac{1}{\eta_y}\sqn{y^{k} - y^*}
		%	\\&+
		%	\frac{\theta}{4\eta_y}\sqn{y^k - y^{k-1}}
		%	-
		%	\frac{1}{4\eta_y}
		%	\sqn{y^{k+1} - y^k}
		%	+		
		%	\left(\frac{2}{\sigma_x} - 1\right)\bg_f(x_f^k,x^*) 
		%	+
		%	\left(\frac{2}{\sigma_y} - 1\right)\bg_g(y_f^k,y^*) 
		%	\\&+
		%	2\<y^{k+1} -y^k,\mA(x^{k+1} - x^*)>
		%	-
		%	2\theta\<y^k -y^{k-1},\mA(x^{k} - x^*)>.
	\end{align*}
	Using the definition of $\theta$ we get
	\begin{align*}
		\mathrm{(LHS)}
		&\leq
		\theta\left(
		\frac{1}{\eta_x}\sqn{x^{k} - x^*}
		+
		\frac{1}{\eta_y}\sqn{y^{k} - y^*}
		+
		\frac{1}{4\eta_y}\sqn{y^k - y^{k-1}}
		-
		2\<y^k -y^{k-1},\mA(x^{k} - x^*)>
		\right)
		\\&+
		\theta\left(
		\frac{2}{\sigma_x}\bg_f(x_f^k,x^*) 
		+
		\frac{2}{\sigma_y}\bg_g(y_f^k,y^*) 
		\right)	
		-
		\frac{1}{4\eta_y}
		\sqn{y^{k+1} - y^k}
		+
		2\<y^{k+1} -y^k,\mA(x^{k+1} - x^*)>.
	\end{align*}
	After rearranging and using the definition of $\Psi^k$ we get
	\begin{equation*}
		\Psi^{k+1}\leq \theta\Psi^k.
	\end{equation*}
	Finally, using the definition of $\Psi^k$, $\eta_x$ and $\eta_y$ we get
	\begin{align*}
		\Psi^k
		&\geq
		\frac{1}{\eta_x}\sqn{x^{k} - x^*}
		+
		\frac{1}{\eta_y}\sqn{y^{k} - y^*}
		+
		\frac{1}{4\eta_y}\sqn{y^k - y^{k-1}}
		-
		2\<y^k -y^{k-1},\mA(x^{k} - x^*)>
		\\&\geq
		\frac{1}{\eta_x}\sqn{x^{k} - x^*}
		+
		\frac{1}{\eta_y}\sqn{y^{k} - y^*}
		+
		\frac{1}{4\eta_y}\sqn{y^k - y^{k-1}}
		-
		2L_{xy}\norm{y^k -y^{k-1}}\norm{x^{k} - x^*}
		\\&\geq
		\frac{1}{\eta_x}\sqn{x^{k} - x^*}
		+
		\frac{1}{\eta_y}\sqn{y^{k} - y^*}
		+
		\frac{1}{4\eta_y}\sqn{y^k - y^{k-1}}
		-
		\frac{1}{2\sqrt{\eta_x\eta_y}}\norm{y^k -y^{k-1}}\norm{x^{k} - x^*}
		\\&\geq
		\frac{1}{\eta_x}\sqn{x^{k} - x^*}
		+
		\frac{1}{\eta_y}\sqn{y^{k} - y^*}
		+
		\frac{1}{4\eta_y}\sqn{y^k - y^{k-1}}
		-
		\frac{1}{4\eta_x}\sqn{x^{k} - x^*}
		-
		\frac{1}{4\eta_y}\sqn{y^k - y^{k-1}}
		\\&=
		\frac{3}{4\eta_x}\sqn{x^{k} - x^*}
		+
		\frac{1}{\eta_y}\sqn{y^{k} - y^*}.
	\end{align*}
\end{proof}

\begin{proof}[Proof of Theorem~\ref{apd:thm}]
	From \eqref{apd:eq:1} and \eqref{apd:eq:2} we can conclude that
	\begin{equation*}
		\frac{3}{4\eta_x}\sqn{x^{k} - x^*}
		+
		\frac{1}{\eta_y}\sqn{y^{k} - y^*} \leq \theta^k \Psi^0.
	\end{equation*}
	This implies the following inequality
	\begin{equation*}
		\max\left\{\sqn{x^{k} - x^*},\sqn{y^{k} - x^*}\right\}\leq\theta^k \Psi^0\max\left\{4\eta_x/3,\eta_y\right\}.
	\end{equation*}
	Hence, we can conclude that 
	\begin{equation*}
		\max\left\{\sqn{x^{k} - x^*},\sqn{y^{k} - x^*}\right\} \leq \epsilon,
	\end{equation*}
	as long as the number of iterations $k$ satisfies
	\begin{equation*}
		k\geq\frac{1}{1-\theta}\log\frac{C}{\epsilon},
	\end{equation*}
	where $C = \Psi^0\max\left\{4\eta_x/3,\eta_y\right\}$, which does not depend on $\epsilon$.
	From \eqref{apd:theta} we obtain
	\begin{align*}
		\frac{1}{1-\theta}&=\min\left\{\frac{1}{\rho_a(\delta,\sigma_x,\sigma_y)},\frac{1}{\rho_b(\delta,\sigma_x,\sigma_y)},\frac{1}{\rho_c(\delta,\sigma_x,\sigma_y)},\frac{1}{\rho_d(\delta,\sigma_x,\sigma_y)}\right\}.
	\end{align*}
	We can now try to approximately optimize parameters $\delta>0$ and $\sigma_x,\sigma_y \in (0,1]$ to obtain the smallest possible values of $\rho_a(\delta,\sigma_x,\sigma_y)^{-1},\rho_b(\delta,\sigma_x,\sigma_y)^{-1},\rho_c(\delta,\sigma_x,\sigma_y)^{-1},\rho_d(\delta,\sigma_x,\sigma_y)^{-1}$. This can be done in a closed form and the result is the following:
	\begin{align*}
		\frac{1}{\rho_a} &\leq
		4 + 4\max\left\{\sqrt{\frac{L_x}{\mu_x}}, \sqrt{\frac{L_y}{\mu_y}},\frac{L_{xy}}{\sqrt{\mu_x\mu_y}}\right\}
		\text{ for } \delta = \sqrt{\frac{\mu_y}{\mu_x}}, \sigma_x = \sqrt{\frac{\mu_x}{2L_x}},\sigma_y = \sqrt{\frac{\mu_x}{2L_x}},\\
		\frac{1}{\rho_b} & \leq
		4+8\max\left\{
		\frac{\sqrt{L_xL_y}}{\mu_{xy}},
		\frac{L_{xy}}{\mu_{xy}}\sqrt{\frac{L_x}{\mu_x}},
		\frac{L_{xy}^2}{\mu_{xy}^2}
		\right\}
		\text{ for } \delta = \sqrt{\frac{\mu_{xy}^2}{2\mu_xL_x}}, \sigma_x = \sqrt{\frac{\mu_x}{2L_x}},\sigma_y =\min\left\{1,\sqrt{\frac{\mu_{xy}^2}{4L_xL_y}}\right\},\\
		\frac{1}{\rho_c} & \leq
		4+8\max\left\{
		\frac{\sqrt{L_xL_y}}{\mu_{yx}},
		\frac{L_{xy}}{\mu_{yx}}\sqrt{\frac{L_y}{\mu_y}},
		\frac{L_{xy}^2}{\mu_{yx}^2}
		\right\}
		\text{ for } \delta = \sqrt{\frac{2\mu_yL_y}{\mu_{yx}^2}},\sigma_x =\min\left\{1,\sqrt{\frac{\mu_{yx}^2}{4L_xL_y}}\right\},\sigma_y = \sqrt{\frac{\mu_y}{2L_y}},\\
		\frac{1}{\rho_d} &\leq 2+8\max\left\{
		\frac{\sqrt{L_xL_y}L_{xy}}{\mu_{xy}\mu_{yx}},
		\frac{L_{xy}^2}{\mu_{yx}^2},
		\frac{L_{xy}^2}{\mu_{xy}^2}
		\right\}
		\text{ for } \delta = \frac{\mu_{xy}}{\mu_{yx}}\sqrt{\frac{L_y}{L_x}}, \sigma_x = \min\left\{1,\sqrt{\frac{\mu_{yx}^2}{4L_xL_y}}\right\},\sigma_y =\min\left\{1,\sqrt{\frac{\mu_{xy}^2}{4L_xL_y}}\right\}.
	\end{align*}
	Note, that we set $\mu_y = 0$ in the bound for $\rho_b^{-1}$, $\mu_x= 0$ in the bound for $\rho_c^{-1}$ and $\mu_x = \mu_y = 0$ in the bound for $\rho_d^{-1}$. This is a valid move, because any convex function is $0$-strongly convex by the definition of strong convexity.
\end{proof}

\newpage

\section{Proof of Theorem~\ref{gda:thm}} \label{sec:AppB}

\begin{lemma}
	Problem~\eqref{eq:main2} has a unique solution $(x^*, y^*)$.
\end{lemma}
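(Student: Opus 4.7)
The plan is to prove existence and uniqueness of the saddle point separately. I would handle uniqueness first, since it is short and uses Assumption~\ref{ass:F3} cleanly; existence requires more care in the degenerate cases.

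For uniqueness, suppose $(x_1^*, y_1^*)$ and $(x_2^*, y_2^*)$ are both saddle points of $F$. Chaining the four saddle-point inequalities gives
\begin{equation*}
F(x_1^*, y_2^*) \leq F(x_1^*, y_1^*) \leq F(x_2^*, y_1^*) \leq F(x_2^*, y_2^*) \leq F(x_1^*, y_2^*),
\end{equation*}
which forces equality throughout and makes the mixed pairs $(x_1^*, y_2^*)$ and $(x_2^*, y_1^*)$ saddle points as well. The first-order conditions \eqref{eq:opt2} therefore hold at all four corners of the rectangle; in particular $\nabla_x F(x_1^*, y_1^*) = \nabla_x F(x_1^*, y_2^*) = 0$ and $\nabla_y F(x_1^*, y_1^*) = \nabla_y F(x_2^*, y_1^*) = 0$. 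Strong convexity of $F$ in $x$ yields $\mu_x \norm{x_1^* - x_2^*}^2 \leq 0$, while Assumption~\ref{ass:F3} applied to the $y$-gradient yields $\mu_{yx}\norm{x_1^* - x_2^*} \leq 0$; since $\max\{\mu_x, \mu_{yx}\} > 0$ by condition \eqref{eq:mu}, at least one of these forces $x_1^* = x_2^*$. The symmetric argument using $\max\{\mu_y, \mu_{xy}\} > 0$ forces $y_1^* = y_2^*$.

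For existence, I would mimic the structure of the proof of \Cref{lem}: show that the primal function $\phi(x) \coloneqq \max_y F(x,y)$ is well-defined, convex, and coercive, so that it has a minimizer $x^*$; the pair $(x^*, y^*)$, with $y^*$ any maximizer of $y \mapsto F(x^*, y)$, is then a saddle point. Attainment of the inner maximum is immediate when $\mu_y > 0$, and coercivity of $\phi$ is immediate when $\mu_x > 0$. To handle the vanishing cases, I would regularize: the perturbed problem $\min_x\max_y F(x,y) + \frac{\epsilon}{2}(\norm{x}^2 - \norm{y}^2)$ is strongly convex--strongly concave for any $\epsilon > 0$ and hence admits a unique saddle point $(x_\epsilon^*, y_\epsilon^*)$; combining the first-order conditions of the perturbed problem with Assumption~\ref{ass:F3} and condition \eqref{eq:mu} should produce a bound on $\norm{x_\epsilon^*} + \norm{y_\epsilon^*}$ uniform in $\epsilon$, and any subsequential limit as $\epsilon \to 0$ is a saddle point of the original problem. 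The main obstacle is exactly this uniform bound in the fully degenerate regime $\mu_x = \mu_y = 0$ (so that only $\mu_{xy}, \mu_{yx}$ are positive), where one cannot use ordinary strong convexity and the bound must come purely from the cross-monotonicity constants via a range-space decomposition analogous to the one used in the bilinear case of \Cref{lem}.
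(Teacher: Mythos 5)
Your uniqueness half is a correct, self-contained argument and takes a genuinely different route from the paper's. Chaining the saddle inequalities shows that all four corners of the rectangle $(x_i^*,y_j^*)$ are saddle points, hence satisfy \eqref{eq:opt2}; then $\nabla_x F(x_1^*,y_1^*)=\nabla_x F(x_2^*,y_1^*)=0$ together with $\mu_x$-strong convexity of $F(\cdot,y_1^*)$ gives $\mu_x\sqn{x_1^*-x_2^*}\leq 0$, while $\nabla_y F(x_1^*,y_1^*)=\nabla_y F(x_2^*,y_1^*)=0$ together with \eqref{eq:F3} gives $\mu_{yx}\norm{x_1^*-x_2^*}\leq 0$, and condition \eqref{eq:mu} forces $x_1^*=x_2^*$; the symmetric argument gives $y_1^*=y_2^*$. (Small slip: for the $\mu_x$ estimate the relevant pair of vanishing $x$-gradients is at $(x_1^*,y_1^*)$ and $(x_2^*,y_1^*)$, not the pair you listed; this is harmless because all four corners are stationary.)

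Existence is where there is a genuine gap, and you flag it yourself. After regularizing, $\nabla_x F(x_\epsilon,y_\epsilon)=-\epsilon x_\epsilon$ and $\nabla_y F(x_\epsilon,y_\epsilon)=\epsilon y_\epsilon$, and the only tools available to bound $(x_\epsilon,y_\epsilon)$ uniformly are \eqref{eq:F2}--\eqref{eq:F3}. Applying them against a reference point $(\bar x,\bar y)$ yields the coupled system
\begin{equation*}
\mu_{xy}\norm{y_\epsilon-\bar y}\leq L_x\norm{x_\epsilon-\bar x}+\epsilon\norm{x_\epsilon}+\norm{\nabla_x F(\bar x,\bar y)},\qquad
\mu_{yx}\norm{x_\epsilon-\bar x}\leq L_y\norm{y_\epsilon-\bar y}+\epsilon\norm{y_\epsilon}+\norm{\nabla_y F(\bar x,\bar y)},
\end{equation*}
which closes to a uniform bound only if $\mu_{xy}\mu_{yx}>L_xL_y$, a restriction the lemma does not assume. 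Nor can you borrow the range-space decomposition from the proof of \Cref{lem}: problem~\eqref{eq:main2} has no factorization through a matrix $\mA$, so there is no $\range\mA^\top$ on which to localize. The paper instead goes through a Banach fixed-point argument: it defines the one-step gradient operator $T\colon(x,y)\mapsto(x-t_x\nabla_x F(x,y),\,y-t_y\nabla_y F(x,y))$, whose fixed points are exactly the solutions of \eqref{eq:opt2}, and asserts that the Lyapunov estimates developed in the proof of \Cref{gda:thm} show $T$ (or, more precisely, the analogous operator encoding the extrapolated update) is a contraction in a suitable norm, so existence and uniqueness follow simultaneously from completeness of $\R^{d_x}\times\R^{d_y}$. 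That single device delivers both halves at once and is exactly the ingredient your plan is missing.
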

\begin{proof}
	Consider operator $T\colon \R^{d_x}\times \R^{d_y} \rightarrow \R^{d_x}\times \R^{d_y}$ defined as $T \colon (x,y) \mapsto (x - t_x \nabla_x F(x,y),y - t_y \nabla_y F(x,y)) $ for some fixed $t_x,t_y > 0$. It is obvious that $(x,y)$ is a fixed point of operator $T$ if and only if $(x,y)$ is a solution to problem~\eqref{eq:main2}.
	If one can show that this operator is contractive, then it has a unique fixed point due to Banach fixed-point theorem.  The proof of the fact that $T$ is contractive is similar to the proof of the rest of \Cref{gda:thm}.
\end{proof}

\begin{lemma}
	Let $\eta_x$ be defined as
	\begin{equation}
		\eta_x = \min\left\{\frac{1}{8L_x}, \frac{\delta}{4L_{xy}}\right\},
	\end{equation}
	and let $\eta_y$ be defined as
	\begin{equation}
		\eta_y = \min\left\{\frac{1}{8L_y}, \frac{1}{4\delta L_{xy}}\right\},
	\end{equation}
	where $\delta>0$ is a parameter.
	Let $\theta$ be defined as
	\begin{equation}\label{gda:theta}
		\theta = \theta(\delta) = 1 - \max\left\{\rho_a(\delta),\rho_b(\delta),\rho_c(\delta),\rho_d(\delta)\right\},
	\end{equation}
	where $\rho_a(\delta),\rho_b(\delta),\rho_c(\delta),\rho_d(\delta)$ are defined as
	\begin{align}
		\frac{1}{\rho_a(\delta)} &= \max\left\{\frac{8L_x}{\mu_x},\frac{8L_y}{\mu_y},\frac{4L_{xy}}{\delta\mu_x},\frac{4L_{xy}\delta}{\mu_y}\right\},\\
		\frac{1}{\rho_b(\delta)} &= \max\left\{\frac{8L_x}{\mu_x},\frac{512L_xL_y}{\mu_{xy}^2},\frac{4L_{xy}}{\delta\mu_x},\frac{256L_xL_{xy}\delta}{\mu_{xy}^2},\frac{256L_yL_{xy}}{\mu_{xy}^2\delta},\frac{128L_{xy}^2}{\mu_{xy}^2}\right\},\\
		%\frac{512L_xL_y}{\mu_{xy}^2},\frac{256L_xL_{xy}\delta}{\mu_{xy}^2},\frac{256L_yL_{xy}}{\mu_{xy}^2\delta},\frac{128L_{xy}^2}{\mu_{xy}^2}
		\frac{1}{\rho_c(\delta)} &=
		\max\left\{\frac{8L_y}{\mu_y},\frac{512L_xL_y}{\mu_{yx}^2},\frac{4L_{xy}\delta}{\mu_y},\frac{256L_xL_{xy}\delta}{\mu_{yx}^2},\frac{256L_yL_{xy}}{\mu_{yx}^2\delta},\frac{128L_{xy}^2}{\mu_{yx}^2}\right\}
		%	 \max\left\{\frac{1}{\eta_y\mu_y},\frac{2L_y}{\eta_x\mu_{yx}^2},\frac{8}{\eta_x\eta_y\mu_{yx}^2}\right\}
		,\\
		\frac{1}{\rho_d(\delta)} &= \max\left\{\frac{512L_xL_y}{\min\{\mu_{xy}^2,\mu_{yx}^2\}},\frac{256L_xL_{xy}\delta}{\min\{\mu_{xy}^2,\mu_{yx}^2\}},\frac{256L_yL_{xy}}{\min\{\mu_{xy}^2,\mu_{yx}^2\}\delta},\frac{128L_{xy}^2}{\min\{\mu_{xy}^2,\mu_{yx}^2\}}\right\}.
	\end{align}
	Let $\Psi^k$ be the following Lyapunov function:
	\begin{equation}
		\Psi^k = \frac{1}{\eta_x}\sqn{x^k - x^*}
		+
		\frac{1}{\eta_y}\sqn{y^k - y^*} 
		-
		2\<\nabla_x F(x^{k-1},y^k) - \nabla_x F(x^{k-1},y^{k-1}), x^k  - x^* >
		+
		\frac{5}{16\eta_y}\sqn{y^k - y^{k-1}}.
	\end{equation}
	Then, the following inequalities hold
	\begin{equation}\label{gda:eq:1}
		\Psi^k \geq \frac{3}{4\eta_x}\sqn{x^{k} - x^*}
		+
		\frac{1}{\eta_y}\sqn{y^{k} - y^*},
	\end{equation}
	\begin{equation}\label{gda:eq:2}
		\Psi^{k+1} \leq \theta\Psi^k.
	\end{equation}
	
\end{lemma}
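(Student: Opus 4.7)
The plan is to follow the same template as the proof of \Cref{apd:thm}: derive one-step inequalities for $\tfrac{1}{\eta_x}\sqn{x^{k+1}-x^*}$ and $\tfrac{1}{\eta_y}\sqn{y^{k+1}-y^*}$ from the two update rules of \Cref{gda:alg}, cancel cross terms using the gradient extrapolation, and repackage everything into the Lyapunov difference $\Psi^{k+1}-\theta\Psi^k$. Throughout, I will use \Cref{ass:F1} (strong convexity in $x$ and strong concavity in $y$ of $F$, together with $L_x$- and $L_y$-smoothness), \Cref{ass:F2} ($L_{xy}$-Lipschitzness of the cross-partial gradients), and, where needed to recover convergence without strong convexity, the lower bounds of \Cref{ass:F3}. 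The optimality condition $\nabla_x F(x^*,y^*)=\nabla_y F(x^*,y^*)=0$ from \eqref{eq:opt2} will serve as the pivot for all gradient manipulations.

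For the $x$-step, I would expand $\sqn{x^{k+1}-x^*}=\sqn{x^k-x^*}+\tfrac{2}{\eta_x}\<x^{k+1}-x^k,x^{k+1}-x^*>-\sqn{x^{k+1}-x^k}$, plug in Line~\ref{gda:line:x}, and subtract $\nabla_x F(x^*,y^*)=0$. Splitting $\nabla_x F(x^k,y^k)-\nabla_x F(x^*,y^*)=[\nabla_x F(x^k,y^k)-\nabla_x F(x^*,y^k)]+[\nabla_x F(x^*,y^k)-\nabla_x F(x^*,y^*)]$ produces a $\mu_x$-strong-convexity term plus a ``cross'' inner product involving $\nabla_x F(x^*,y^k)-\nabla_x F(x^*,y^*)$. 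The same expansion for $y$ via Line~\ref{gda:line:y} yields the $\mu_y$-strong-concavity term and a symmetric cross inner product. The key telescoping: the term $-2\theta\<\nabla_x F(x^{k-1},y^k)-\nabla_x F(x^{k-1},y^{k-1}),x^{k+1}-x^*>$ coming out of the $x$-update expansion is rewritten as the $\Psi^{k+1}$ extrapolation summand, $-2\<\nabla_x F(x^k,y^{k+1})-\nabla_x F(x^k,y^k),x^{k+1}-x^*>$, plus remainder terms; the remainder is controlled by \Cref{ass:F2}, the consumed $-\tfrac{1}{\eta_x}\sqn{x^{k+1}-x^k}$ and $-\tfrac{1}{\eta_y}\sqn{y^{k+1}-y^k}$ pieces, and (where $\mu_x$ or $\mu_y$ vanishes) by the norm lower bounds $\mu_{xy},\mu_{yx}$ of \Cref{ass:F3} applied to the $y$- and $x$-cross-gradients. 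Summing the $x$- and $y$-inequalities gives an upper bound on $\Psi^{k+1}$ whose right-hand side is $\theta\Psi^k$, provided the coefficients of $\sqn{x^k-x^*}$, $\sqn{y^k-y^*}$, $\sqn{y^k-y^{k-1}}$ all decay by the correct factor; this is exactly the role of $\rho_a,\rho_b,\rho_c,\rho_d$ in the definition of $\theta$.

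For the lower bound \eqref{gda:eq:1}, I only need to dominate the cross term $-2\<\nabla_x F(x^{k-1},y^k)-\nabla_x F(x^{k-1},y^{k-1}),x^k-x^*>$ in $\Psi^k$. By Cauchy--Schwarz and \Cref{ass:F2}, its absolute value is at most $2L_{xy}\norm{y^k-y^{k-1}}\norm{x^k-x^*}$, and Young's inequality with weight $\tfrac{1}{4\eta_x}$ bounds this by $\tfrac{1}{4\eta_x}\sqn{x^k-x^*}+4\eta_x L_{xy}^2\sqn{y^k-y^{k-1}}$. The parameter choices $\eta_x\leq\delta/(4L_{xy})$ and $\eta_y\leq 1/(4\delta L_{xy})$ give $\eta_x\eta_y L_{xy}^2\leq 1/16$, so $4\eta_x L_{xy}^2\leq 1/(4\eta_y)\leq 5/(16\eta_y)$, which is exactly what is needed to absorb the $y$-difference term into the $\tfrac{5}{16\eta_y}\sqn{y^k-y^{k-1}}$ summand of $\Psi^k$ and leave $\tfrac{3}{4\eta_x}\sqn{x^k-x^*}+\tfrac{1}{\eta_y}\sqn{y^k-y^*}$.

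The hard part will be the bookkeeping in the contraction step: a large number of inner-product residuals appear (from the $x$-step, the $y$-step, and the gradient-extrapolation telescoping), each of which must be bounded by one of $\sqn{x^k-x^*}$, $\sqn{y^k-y^*}$, $\sqn{x^{k+1}-x^k}$, $\sqn{y^{k+1}-y^k}$, or $\sqn{y^k-y^{k-1}}$ using a separate Young's inequality whose weight has to be orchestrated so that the absorbed terms fit within the negative $-\tfrac{1}{\eta_x}\sqn{x^{k+1}-x^k}$, $-\tfrac{1}{\eta_y}\sqn{y^{k+1}-y^k}$ and the new $-\tfrac{5}{16\eta_y}\sqn{y^{k+1}-y^k}$ obtained after shifting. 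In particular, because $F$ does not split additively, one cannot directly invoke a Bregman identity as in the APDG proof, and the monotonicity-based estimates leave residuals of the form $\norm{\nabla_x F(x,y^{k+1})-\nabla_x F(x,y^k)}\cdot\norm{x^{k+1}-x^*}$ that must be controlled either by $L_{xy}\norm{y^{k+1}-y^k}$ (when strong convexity/concavity is available) or through the lower bounds of \Cref{ass:F3} (otherwise); matching these to the explicit rates $\rho_a,\rho_b,\rho_c,\rho_d$ is the main computational burden.
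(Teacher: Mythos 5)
Your treatment of the lower bound \eqref{gda:eq:1} is exactly the paper's: Cauchy--Schwarz with \Cref{ass:F2} gives $2L_{xy}\norm{y^k-y^{k-1}}\norm{x^k-x^*}$, and Young's inequality with weight $\tfrac{1}{4\eta_x}$ plus $\eta_x\eta_yL_{xy}^2\leq\tfrac{1}{16}$ absorbs the $y$-difference into the $\tfrac{5}{16\eta_y}$ slack. That part is fine.

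For the contraction \eqref{gda:eq:2}, however, there is a genuine gap at the decisive step. You plan to split $\nabla_x F(x^k,y^k)-\nabla_x F(x^*,y^*)$ into a co-coercivity piece $\nabla_x F(x^k,y^k)-\nabla_x F(x^*,y^k)$ and a ``cross'' piece $\nabla_x F(x^*,y^k)-\nabla_x F(x^*,y^*)$, and symmetrically for $y$, then ``cancel cross terms.'' For a general coupling $F$ these cross terms do \emph{not} cancel: $\nabla_x F(x^*,\cdot)$ and $\nabla_y F(\cdot,y^*)$ are not adjoints of each other (that identity is exactly what makes the bilinear case special), and even invoking monotonicity of the saddle operator would leave arguments mismatched ($y^k$ vs.\ $y^{k+1}$, $x^k$ vs.\ $x^{k+1}$) with residuals that, by themselves, do not produce the required \emph{coercive} terms. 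More importantly, your plan never explains where the negative $\sqn{y^k-y^*}$ and $\sqn{x^k-x^*}$ contributions proportional to $\mu_{xy}^2$ and $\mu_{yx}^2$ come from; you merely write that residuals are ``controlled through the lower bounds of \Cref{ass:F3}.'' But \Cref{ass:F3} provides lower bounds on gradient \emph{norms}, whereas the leftover cross terms are indefinite inner products, and one cannot lower bound an indefinite inner product by a norm to produce a contraction.

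The missing idea, which the paper supplies, is to avoid the gradient-splitting altogether and work with function-value gaps instead. Rewrite the $x$-update's inner product in terms of $\nabla_x F(x^k,y^{k+1})$ (the mismatch with $\nabla_x F(x^k,y^k)$ is exactly the $\Psi^{k+1}$ extrapolation summand), then use $\mu_x$-strong convexity and $L_x$-smoothness in $x$ to convert $\<\nabla_x F(x^k,y^{k+1}),x^{k+1}-x^*>$ into the value gap $F(x^{k+1},y^{k+1})-F(x^*,y^{k+1})$ plus the usual quadratic corrections; do the symmetric thing for $y$. Summing gives the saddle-point gap $F(x^*,y^{k+1})-F(x^{k+1},y^*)$, which is decomposed as $-\bigl(F(x^{k+1},y^*)-F(x^*,y^*)\bigr)-\bigl(F(x^*,y^*)-F(x^*,y^{k+1})\bigr)$, and because $\nabla_x F(x^*,y^*)=\nabla_y F(x^*,y^*)=0$, $L_x$- and $L_y$-smoothness give
\begin{equation*}
F(x^{k+1},y^*)-F(x^*,y^*)\geq\frac{1}{2L_x}\sqn{\nabla_x F(x^{k+1},y^*)},\qquad
F(x^*,y^*)-F(x^*,y^{k+1})\geq\frac{1}{2L_y}\sqn{\nabla_y F(x^*,y^{k+1})}.
\end{equation*}
Only \emph{now} does \Cref{ass:F3} enter: the estimates $\sqn{\nabla_x F(x^{k+1},y^*)}\geq\tfrac{1}{2}\sqn{\nabla_x F(x^{k+1},y^*)-\nabla_x F(x^{k+1},y^k)}-\sqn{\nabla_x F(x^{k+1},y^k)}\geq\tfrac{\mu_{xy}^2}{2}\sqn{y^k-y^*}-\sqn{\nabla_x F(x^{k+1},y^k)}$ (and symmetrically in the other direction) produce the $-\tfrac{\delta_x\mu_{xy}^2}{2L_x}\sqn{y^k-y^*}$ and $-\tfrac{\delta_y\mu_{yx}^2}{2L_y}\sqn{x^k-x^*}$ contractions. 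The remaining gradient norms $\sqn{\nabla_x F(x^{k+1},y^k)}$ and $\sqn{\nabla_y F(x^k,y^{k+1})}$ are converted to $\sqn{x^{k+1}-x^k}$, $\sqn{y^{k+1}-y^k}$, $\sqn{y^k-y^{k-1}}$ via the two update rules together with \Cref{ass:F1}--\ref{ass:F2}. The interpolation parameters $\delta_x,\delta_y\in(0,1]$, chosen proportional to $\eta_xL_x$ and $\eta_yL_y$, are what make the two regimes (use $\mu_x,\mu_y$ when available; otherwise use $\mu_{xy},\mu_{yx}$) fit into a single inequality and yield the explicit $\rho_a,\rho_b,\rho_c,\rho_d$. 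Without this ``gap $\to$ gradient norm $\to$ \Cref{ass:F3}'' chain, and without the $\delta_x,\delta_y$ interpolation, the proposal does not produce the stated rates when $\mu_x$ or $\mu_y$ vanishes.
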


\begin{proof}
	Using Line~\ref{gda:line:x} of the Algorithm~\ref{gda:alg} we get.
	\begin{align*}
		\frac{1}{\eta_x}\sqn{x^{k+1} - x^*}
		&=
		\frac{1}{\eta_x}\sqn{x^k - x^*} + \frac{2}{\eta_x}\<x^{k+1} - x^k,x^{k+1} - x^*> - \frac{1}{\eta_x}\sqn{x^{k+1} - x^k}
		\\&=
		\frac{1}{\eta_x}\sqn{x^k - x^*} -  \frac{1}{\eta_x}\sqn{x^{k+1} - x^k}
		\\&-2\<\nabla_x F(x^k,y^k) + \theta(\nabla_x F(x^{k-1},y^k) - \nabla_x F(x^{k-1},y^{k-1})),x^{k+1} - x^*> 
		\\&=
		\frac{1}{\eta_x}\sqn{x^k - x^*} -   \frac{1}{\eta_x}\sqn{x^{k+1} - x^k} - 2\<\nabla_x F(x^k,y^{k+1}), x^{k+1} - x^k + x^{k} - x^* >
		\\&+
		2\<\nabla_x F(x^k,y^{k+1}) - \nabla_x F(x^k,y^k), x^{k+1}  - x^* > - 2\theta\<\nabla_x F(x^{k-1},y^k) - \nabla_x F(x^{k-1},y^{k-1}), x^{k+1}  - x^* >.
	\end{align*}
	Using the Assumption~\ref{ass:F1} we get
	\begin{align*}
		\frac{1}{\eta_x}\sqn{x^{k+1} - x^*}
		&\leq
		\left(\frac{1}{\eta_x} - \mu_x\right)\sqn{x^k - x^*}  +\left(L_x -   \frac{1}{\eta_x}\right)\sqn{x^{k+1} - x^k} - 2(F(x^{k+1},y^{k+1}) - F(x^*,y^{k+1}))	
		\\&+
		2\<\nabla_x F(x^k,y^{k+1}) - \nabla_x F(x^k,y^k), x^{k+1}  - x^* > - 2\theta\<\nabla_x F(x^{k-1},y^k) - \nabla_x F(x^{k-1},y^{k-1}), x^{k+1}  - x^* >.
	\end{align*}
	Using the Assumption~\ref{ass:F2} we get
	\begin{align*}
		\frac{1}{\eta_x}\sqn{x^{k+1} - x^*}
		&\leq
		\left(\frac{1}{\eta_x} - \mu_x\right)\sqn{x^k - x^*}  +\left(L_x -   \frac{1}{\eta_x}\right)\sqn{x^{k+1} - x^k} - 2(F(x^{k+1},y^{k+1}) - F(x^*,y^{k+1}))	
		\\&+
		2\<\nabla_x F(x^k,y^{k+1}) - \nabla_x F(x^k,y^k), x^{k+1}  - x^* > - 2\theta\<\nabla_x F(x^{k-1},y^k) - \nabla_x F(x^{k-1},y^{k-1}), x^k  - x^* >
		\\&+
		2L_{xy}\theta\norm{x^{k+1} - x^k}\norm{y^k - y^{k-1}}.
	\end{align*}
	Similarly, we can obtain the following  upper-bound on $\frac{1}{\eta_y}\sqn{y^{k+1} - y^*}$:
	\begin{equation*}
		\frac{1}{\eta_y}\sqn{y^{k+1} - y^*}
		\leq
		\left(\frac{1}{\eta_y} - \mu_y\right)\sqn{y^k - y^*}  +\left(L_y-   \frac{1}{\eta_y}\right)\sqn{y^{k+1} - y^k} + 2(F(x^{k+1},y^{k+1}) - F(x^{k+1},y^*))	.
	\end{equation*}
	Summing up the upper-bounds on $\frac{1}{\eta_x}\sqn{x^{k+1} - x^*}$ and $\frac{1}{\eta_y}\sqn{y^{k+1} - y^*}$ gives
	\begin{align*}
		\text{(LHS)}
		&\leq
		\left(\frac{1}{\eta_x} - \mu_x\right)\sqn{x^k - x^*}  +\left(L_x -   \frac{1}{\eta_x}\right)\sqn{x^{k+1} - x^k}
		\\&+
		\left(\frac{1}{\eta_y} - \mu_y\right)\sqn{y^k - y^*}  +\left(L_y-   \frac{1}{\eta_y}\right)\sqn{y^{k+1} - y^k}
		\\&+
		2L_{xy}\theta\norm{x^{k+1} - x^k}\norm{y^k - y^{k-1}}
		-
		2\theta\<\nabla_x F(x^{k-1},y^k) - \nabla_x F(x^{k-1},y^{k-1}), x^k  - x^* >
		\\&+
		2(F(x^*,y^{k+1}) - F(x^{k+1},y^*)),
	\end{align*}
	where (LHS) is defined as
	\begin{equation*}
		\text{(LHS)} = \frac{1}{\eta_x}\sqn{x^{k+1} - x^*} + \frac{1}{\eta_y}\sqn{y^{k+1} - y^*} - 2\<\nabla_x F(x^k,y^{k+1}) - \nabla_x F(x^k,y^k), x^{k+1}  - x^* >.
	\end{equation*}
	The Assumption~\ref{ass:F1} states, that function $F(x,y)$ is $L_x$-smooth in $x$ and $L_y$-smooth in $y$. Hence, using the optimality conditions \eqref{eq:opt2} we get
	\begin{align*}
		\text{(LHS)}
		&\leq
		\left(\frac{1}{\eta_x} - \mu_x\right)\sqn{x^k - x^*}  +\left(L_x -   \frac{1}{\eta_x}\right)\sqn{x^{k+1} - x^k}
		\\&+
		\left(\frac{1}{\eta_y} - \mu_y\right)\sqn{y^k - y^*}  +\left(L_y-   \frac{1}{\eta_y}\right)\sqn{y^{k+1} - y^k}
		\\&+
		2L_{xy}\theta\norm{x^{k+1} - x^k}\norm{y^k - y^{k-1}}
		-
		2\theta\<\nabla_x F(x^{k-1},y^k) - \nabla_x F(x^{k-1},y^{k-1}), x^k  - x^* >
		\\&-
		2(F(x^{k+1},y^*) - F(x^*,y^*))
		-
		2(F(x^*,y^*)-F(x^*,y^{k+1}))
		\\&\leq
		\left(\frac{1}{\eta_x} - \mu_x\right)\sqn{x^k - x^*}  +\left(L_x -   \frac{1}{\eta_x}\right)\sqn{x^{k+1} - x^k}
		\\&+
		\left(\frac{1}{\eta_y} - \mu_y\right)\sqn{y^k - y^*}  +\left(L_y-   \frac{1}{\eta_y}\right)\sqn{y^{k+1} - y^k}
		\\&+
		2L_{xy}\theta\norm{x^{k+1} - x^k}\norm{y^k - y^{k-1}}
		-
		2\theta\<\nabla_x F(x^{k-1},y^k) - \nabla_x F(x^{k-1},y^{k-1}), x^k  - x^* >
		\\&-
		\frac{\delta_x}{L_x}\sqn{\nabla_x F(x^{k+1}, y^*) }
		-
		\frac{\delta_y}{L_y}\sqn{\nabla_y F(x^*, y^{k+1})},
	\end{align*}
	where $\delta_x,\delta_y \in (0,1]$ are some parameters, that will be defined later.
	Using the Assumption~\ref{ass:F3} we get
	\begin{align*}
		\text{(LHS)}
		&\leq
		\left(\frac{1}{\eta_x} - \mu_x\right)\sqn{x^k - x^*}  +\left(L_x -   \frac{1}{\eta_x}\right)\sqn{x^{k+1} - x^k}
		\\&+
		\left(\frac{1}{\eta_y} - \mu_y\right)\sqn{y^k - y^*}  +\left(L_y-   \frac{1}{\eta_y}\right)\sqn{y^{k+1} - y^k}
		\\&+
		2L_{xy}\theta\norm{x^{k+1} - x^k}\norm{y^k - y^{k-1}}
		-
		2\theta\<\nabla_x F(x^{k-1},y^k) - \nabla_x F(x^{k-1},y^{k-1}), x^k  - x^* >
		\\&-
		\frac{\delta_x}{2L_x}\sqn{\nabla_x F(x^{k+1}, y^*)  - \nabla_x F(x^{k+1}, y^k) }
		+
		\frac{\delta_x}{L_x}\sqn{\nabla_x F(x^{k+1}, y^k) }
		\\&-
		\frac{\delta_y}{2L_y}\sqn{\nabla_y F(x^*, y^{k+1})-\nabla_y F(x^k, y^{k+1})}
		+
		\frac{\delta_y}{L_y}\sqn{\nabla_y F(x^k, y^{k+1})}
		\\&\leq
		\left(\frac{1}{\eta_x} - \mu_x - \frac{\delta_y\mu_{yx}^2}{2L_y}\right)\sqn{x^k - x^*}  +\left(L_x -   \frac{1}{\eta_x}\right)\sqn{x^{k+1} - x^k}
		\\&+
		\left(\frac{1}{\eta_y} - \mu_y - \frac{\delta_x\mu_{xy}^2}{2L_x}\right)\sqn{y^k - y^*}  +\left(L_y-   \frac{1}{\eta_y}\right)\sqn{y^{k+1} - y^k}
		\\&+
		2L_{xy}\theta\norm{x^{k+1} - x^k}\norm{y^k - y^{k-1}}
		-
		2\theta\<\nabla_x F(x^{k-1},y^k) - \nabla_x F(x^{k-1},y^{k-1}), x^k  - x^* >
		\\&+
		\frac{\delta_x}{L_x}\sqn{\nabla_x F(x^{k+1}, y^{k}) }
		+
		\frac{\delta_y}{L_y}\sqn{\nabla_y F(x^k, y^{k+1})}
	\end{align*}
	Using Lines~\ref{gda:line:x} and~\ref{gda:line:y} of the Algorithm~\ref{gda:alg}and the Lipschitzness property of $\nabla_x F(x,y)$ and $\nabla_y F(x,y)$ we get
	\begin{align*}
		\text{(LHS)}
		&\leq
		\left(\frac{1}{\eta_x} - \mu_x - \frac{\delta_y\mu_{yx}^2}{2L_y}\right)\sqn{x^k - x^*}  +\left(L_x -   \frac{1}{\eta_x}\right)\sqn{x^{k+1} - x^k}
		\\&+
		\left(\frac{1}{\eta_y} - \mu_y - \frac{\delta_x\mu_{xy}^2}{2L_x}\right)\sqn{y^k - y^*}  +\left(L_y-   \frac{1}{\eta_y}\right)\sqn{y^{k+1} - y^k}
		\\&+
		2L_{xy}\theta\norm{x^{k+1} - x^k}\norm{y^k - y^{k-1}}
		-
		2\theta\<\nabla_x F(x^{k-1},y^k) - \nabla_x F(x^{k-1},y^{k-1}), x^k  - x^* >
		\\&+
		\frac{2\delta_x}{L_x}\sqn{\nabla_x F(x^{k+1}, y^{k})  - \nabla_x F(x^k,y^k) - \theta(\nabla_x F(x^{k-1},y^k) - \nabla_x F(x^{k-1},y^{k-1}) )}
		+
		\frac{2\delta_x}{L_x\eta_x^2}\sqn{x^{k+1} - x^k}
		\\&+
		\frac{2\delta_y}{L_y}\sqn{\nabla_y F(x^k, y^{k+1}) - \nabla_y F(x^{k+1}, y^k)} + \frac{2\delta_y}{L_y\eta_y^2}\sqn{y^{k+1} - y^k}
		\\&\leq
		\left(\frac{1}{\eta_x} - \mu_x - \frac{\delta_y\mu_{yx}^2}{2L_y}\right)\sqn{x^k - x^*}  +\left(L_x -   \frac{1}{\eta_x}\right)\sqn{x^{k+1} - x^k}
		\\&+
		\left(\frac{1}{\eta_y} - \mu_y - \frac{\delta_x\mu_{xy}^2}{2L_x}\right)\sqn{y^k - y^*}  +\left(L_y-   \frac{1}{\eta_y}\right)\sqn{y^{k+1} - y^k}
		\\&+
		2L_{xy}\theta\norm{x^{k+1} - x^k}\norm{y^k - y^{k-1}}
		-
		2\theta\<\nabla_x F(x^{k-1},y^k) - \nabla_x F(x^{k-1},y^{k-1}), x^k  - x^* >
		\\&+
		4\delta_xL_x\sqn{x^{k+1} - x^k}
		+
		\frac{4\delta_x L_{xy}^2\theta^2}{L_x}\sqn{y^k - y^{k-1}}
		+
		\frac{2\delta_x}{L_x\eta_x^2}\sqn{x^{k+1} - x^k}
		\\&+
		4\delta_yL_y\sqn{y^{k+1} - y^k}
		+
		\frac{4\delta_yL_{xy}^2}{L_y}\sqn{x^{k+1} - x^k}
		+
		\frac{2\delta_y}{L_y\eta_y^2}\sqn{y^{k+1} - y^k}.
	\end{align*}
	Now, we set $\delta_x = \min\left\{1,c_x\eta_xL_x\right\}$, $\delta_y = \min\left\{1,c_y\eta_yL_y\right\}$, where $c_x,c_y>0$ will be defined later, and obtain
	\begin{align*}
		\text{(LHS)}
		&\leq
		\left(\frac{1}{\eta_x} - \mu_x - \frac{\delta_y\mu_{yx}^2}{2L_y}\right)\sqn{x^k - x^*}  +\left(L_x -   \frac{1}{\eta_x}\right)\sqn{x^{k+1} - x^k}
		\\&+
		\left(\frac{1}{\eta_y} - \mu_y - \frac{\delta_x\mu_{xy}^2}{2L_x}\right)\sqn{y^k - y^*}  +\left(L_y-   \frac{1}{\eta_y}\right)\sqn{y^{k+1} - y^k}
		\\&+
		2L_{xy}\theta\norm{x^{k+1} - x^k}\norm{y^k - y^{k-1}}
		-
		2\theta\<\nabla_x F(x^{k-1},y^k) - \nabla_x F(x^{k-1},y^{k-1}), x^k  - x^* >
		\\&+
		4c_x\eta_xL_x^2\sqn{x^{k+1} - x^k}
		+
		4c_x\eta_xL_{xy}^2\theta^2\sqn{y^k - y^{k-1}}
		+
		\frac{2c_x}{\eta_x}\sqn{x^{k+1} - x^k}
		\\&+
		4c_y\eta_yL_y^2\sqn{y^{k+1} - y^k}
		+
		4c_y\eta_yL_{xy}^2\sqn{x^{k+1} - x^k}
		+
		\frac{2c_y}{\eta_y}\sqn{y^{k+1} - y^k}.
	\end{align*}
	Using the definition of $\eta_x$ and $\eta_y$ we get
	\begin{align*}
		\text{(LHS)}
		&\leq
		\left(\frac{1}{\eta_x} - \mu_x - \frac{\delta_y\mu_{yx}^2}{2L_y}\right)\sqn{x^k - x^*}  +\left(L_x -   \frac{1}{\eta_x}\right)\sqn{x^{k+1} - x^k}
		\\&+
		\left(\frac{1}{\eta_y} - \mu_y - \frac{\delta_x\mu_{xy}^2}{2L_x}\right)\sqn{y^k - y^*}  +\left(L_y-   \frac{1}{\eta_y}\right)\sqn{y^{k+1} - y^k}
		\\&
		-
		2\theta\<\nabla_x F(x^{k-1},y^k) - \nabla_x F(x^{k-1},y^{k-1}), x^k  - x^* >
		\\&+
		4c_x\eta_xL_x^2\sqn{x^{k+1} - x^k}
		+
		\frac{(c_x+1)\theta^2}{4\eta_y}\sqn{y^k - y^{k-1}}
		+
		\frac{2c_x}{\eta_x}\sqn{x^{k+1} - x^k}
		\\&+
		4c_y\eta_yL_y^2\sqn{y^{k+1} - y^k}
		+
		\frac{c_y+1}{4\eta_x}\sqn{x^{k+1} - x^k}
		+
		\frac{2c_y}{\eta_y}\sqn{y^{k+1} - y^k}.
		%	\\&=
		%	\left(\frac{1}{\eta_x} - \mu_x - \frac{\delta_y\mu_{yx}^2}{2L_y}\right)\sqn{x^k - x^*}
		%	+
		%	\left(L_x + 3c_x\eta_xL_x^2 -   \frac{1 - 3(c_x + c_y)}{\eta_x}\right)\sqn{x^{k+1} - x^k}
		%	\\&+
		%	\left(\frac{1}{\eta_y} - \mu_y - \frac{\delta_x\mu_{xy}^2}{2L_x}\right)\sqn{y^k - y^*} 
		%	+
		%	\left(L_y + 3c_y\eta_yL_y^2 -   \frac{1 - 3(c_x + c_y)}{\eta_y}\right)\sqn{y^{k+1} - y^k}
		%	\\&+
		%	\frac{3c_x\theta^2}{\eta_y}\sqn{y^k - y^{k-1}}
		%	-
		%	\frac{3c_x}{\eta_y}\sqn{y^{k+1} - y^k}
		%	\\&+
		%	2L_{xy}\theta\norm{x^{k+1} - x^k}\norm{y^k - y^{k-1}}
		%	-
		%	2\theta\<\nabla_x F(x^{k-1},y^k) - \nabla_x F(x^{k-1},y^{k-1}), x^k  - x^* >.
	\end{align*}
	Now, we choose $c_x = c_y = \frac{1}{4}$ and get
	\begin{align*}
		\text{(LHS)}
		&\leq
		\left(\frac{1}{\eta_x} - \mu_x - \frac{\delta_y\mu_{yx}^2}{2L_y}\right)\sqn{x^k - x^*}  +\left(L_x -   \frac{1}{\eta_x}\right)\sqn{x^{k+1} - x^k}
		\\&+
		\left(\frac{1}{\eta_y} - \mu_y - \frac{\delta_x\mu_{xy}^2}{2L_x}\right)\sqn{y^k - y^*}  +\left(L_y-   \frac{1}{\eta_y}\right)\sqn{y^{k+1} - y^k}
		\\&
		-
		2\theta\<\nabla_x F(x^{k-1},y^k) - \nabla_x F(x^{k-1},y^{k-1}), x^k  - x^* >
		\\&+
		\eta_xL_x^2\sqn{x^{k+1} - x^k}
		+
		\frac{5\theta^2}{16\eta_y}\sqn{y^k - y^{k-1}}
		+
		\frac{1}{2\eta_x}\sqn{x^{k+1} - x^k}
		\\&+
		\eta_yL_y^2\sqn{y^{k+1} - y^k}
		+
		\frac{5}{16\eta_x}\sqn{x^{k+1} - x^k}
		+
		\frac{1}{2\eta_y}\sqn{y^{k+1} - y^k}
		\\&=
		\left(\frac{1}{\eta_x} - \mu_x - \frac{\delta_y\mu_{yx}^2}{2L_y}\right)\sqn{x^k - x^*}
		+
		\frac{\eta_xL_x +\eta_x^2L_x^2- 3/16}{\eta_x}\sqn{x^{k+1} - x^k}
		\\&+
		\left(\frac{1}{\eta_y} - \mu_y - \frac{\delta_x\mu_{xy}^2}{2L_x}\right)\sqn{y^k - y^*} 
		+
		\frac{\eta_yL_y +\eta_y^2L_y^2- 3/16}{\eta_y}\sqn{y^{k+1} - y^k}
		\\&-
		2\theta\<\nabla_x F(x^{k-1},y^k) - \nabla_x F(x^{k-1},y^{k-1}), x^k  - x^* >
		+
		\frac{5\theta^2}{16\eta_y}\sqn{y^k - y^{k-1}} - \frac{5}{16\eta_y}\sqn{y^{k+1} - y^k}.
	\end{align*}
	Using the definition of $\eta_x$ and $\eta_y$ we get
	\begin{align*}
		\text{(LHS)}
		&\leq
		\left(\frac{1}{\eta_x} - \mu_x - \frac{\delta_y\mu_{yx}^2}{2L_y}\right)\sqn{x^k - x^*}
		+
		\left(\frac{1}{\eta_y} - \mu_y - \frac{\delta_x\mu_{xy}^2}{2L_x}\right)\sqn{y^k - y^*} 
		\\&-
		2\theta\<\nabla_x F(x^{k-1},y^k) - \nabla_x F(x^{k-1},y^{k-1}), x^k  - x^* >
		+
		\frac{5\theta^2}{16\eta_y}\sqn{y^k - y^{k-1}} - \frac{5}{16\eta_y}\sqn{y^{k+1} - y^k}.
	\end{align*}
	Using the definition of $\delta_x$ and $\delta_y$ we get
	\begin{align*}
		\text{(LHS)}
		&\leq
		\left(1 - \max\left\{\eta_x\mu_x,\min\left\{\frac{\eta_x\mu_{yx}^2}{2L_y},\frac{\eta_x\eta_y\mu_{yx}^2}{8}\right\}\right\}\right)\frac{1}{\eta_x}\sqn{x^k - x^*}
		\\&+
		\left(1 - \max\left\{\eta_y\mu_y,\min\left\{\frac{\eta_y\mu_{xy}^2}{2L_x},\frac{\eta_y\eta_x\mu_{xy}^2}{8}\right\}\right\}\right)\frac{1}{\eta_y}\sqn{y^k - y^*} 
		\\&-
		2\theta\<\nabla_x F(x^{k-1},y^k) - \nabla_x F(x^{k-1},y^{k-1}), x^k  - x^* >
		+
		\frac{5\theta^2}{16\eta_y}\sqn{y^k - y^{k-1}} - \frac{5}{16\eta_y}\sqn{y^{k+1} - y^k}.
	\end{align*}
	Using the definition of $\eta_x,\eta_y$ and $\theta$ we get
	\begin{align*}
		\text{(LHS)}
		&\leq
		\frac{\theta}{\eta_x}\sqn{x^k - x^*}
		+
		\frac{\theta}{\eta_y}\sqn{y^k - y^*} 
		-
		2\theta\<\nabla_x F(x^{k-1},y^k) - \nabla_x F(x^{k-1},y^{k-1}), x^k  - x^* >
		+
		\frac{5\theta}{16\eta_y}\sqn{y^k - y^{k-1}}
		\\&-
		\frac{5}{16\eta_y}\sqn{y^{k+1} - y^k}.
	\end{align*}
	After rearranging and using the definition of $\Psi^k$ we get
	\begin{equation*}
		\Psi^{k+1}\leq \theta\Psi^k.
	\end{equation*}
	Finally, using the definition of $\Psi^k$, $\eta_x$ and $\eta_y$ we get
	\begin{align*}
		\Psi^k
		&=
		\frac{1}{\eta_x}\sqn{x^k - x^*}
		+
		\frac{1}{\eta_y}\sqn{y^k - y^*} 
		-
		2\<\nabla_x F(x^{k-1},y^k) - \nabla_x F(x^{k-1},y^{k-1}), x^k  - x^* >
		+
		\frac{5}{16\eta_y}\sqn{y^k - y^{k-1}}
		\\&\geq
		\frac{1}{\eta_x}\sqn{x^k - x^*}
		+
		\frac{1}{\eta_y}\sqn{y^k - y^*} 
		-
		2L_{xy}\norm{y^k - y^{k-1}}\norm{x^k - x^*}
		+
		\frac{5}{16\eta_y}\sqn{y^k - y^{k-1}}
		\\&\geq
		\frac{1}{\eta_x}\sqn{x^k - x^*}
		+
		\frac{1}{\eta_y}\sqn{y^k - y^*} 
		-
		\frac{1}{4\eta_x}\sqn{x^k - x^*}
		-
		\frac{1}{4\eta_y}\sqn{y^k - y^{k-1}}
		+
		\frac{5}{16\eta_y}\sqn{y^k - y^{k-1}}
		\\&\geq
		\frac{3}{4\eta_x}\sqn{x^k - x^*}
		+
		\frac{1}{\eta_y}\sqn{y^k - y^*} .
	\end{align*}
\end{proof}

\begin{proof}[Proof of Theorem~\ref{gda:thm}]
	From \eqref{gda:eq:1} and \eqref{gda:eq:2} we can conclude that
	\begin{equation*}
		\frac{3}{4\eta_x}\sqn{x^{k} - x^*}
		+
		\frac{1}{\eta_y}\sqn{y^{k} - y^*} \leq \theta^k \Psi^0.
	\end{equation*}
	This implies the following inequality
	\begin{equation*}
		\max\left\{\sqn{x^{k} - x^*},\sqn{y^{k} - x^*}\right\}\leq\theta^k \Psi^0\max\left\{4\eta_x/3,\eta_y\right\}.
	\end{equation*}
	Hence, we can conclude that 
	\begin{equation*}
		\max\left\{\sqn{x^{k} - x^*},\sqn{y^{k} - x^*}\right\} \leq \epsilon,
	\end{equation*}
	as long as the number of iterations $k$ satisfies
	\begin{equation*}
		k\geq\frac{1}{1-\theta}\log\frac{C}{\epsilon},
	\end{equation*}
	where $C = \Psi^0\max\left\{4\eta_x/3,\eta_y\right\}$, which does not depend on $\epsilon$.
	From \eqref{gda:theta} we obtain
	\begin{align*}
		\frac{1}{1-\theta}&=\min\left\{\frac{1}{\rho_a(\delta)},\frac{1}{\rho_b(\delta)},\frac{1}{\rho_c(\delta)},\frac{1}{\rho_d(\delta)}\right\}.
	\end{align*}
	Now, we find the parameter $\delta$ to obtain the following upper bounds on $\rho_a(\delta),\rho_b(\delta),\rho_c(\delta),\rho_d(\delta)$:
	\begin{align}
		\frac{1}{\rho_a} &= \max\left\{
		\frac{8L_x}{\mu_x},
		\frac{8L_y}{\mu_y},
		\frac{4L_{xy}}{\sqrt{\mu_x\mu_y}}
		\right\}
		\text{ for } \delta = \sqrt{\frac{\mu_y}{\mu_x}},\\
		\frac{1}{\rho_b} &= \max\left\{
		\frac{8L_x}{\mu_x},
		\frac{512L_xL_y}{\mu_{xy}^2},
		\frac{128L_{xy}^2}{\mu_{xy}^2}
		\right\}
		\text{ for } \delta = \max\left\{\frac{\mu_{xy}}{8\sqrt{\mu_xL_x}},\sqrt{\frac{L_y}{L_x}}\right\},\\
		\frac{1}{\rho_c} &=
		\max\left\{
		\frac{8L_y}{\mu_y},
		\frac{512L_xL_y}{\mu_{yx}^2},
		\frac{128L_{xy}^2}{\mu_{yx}^2}
		\right\}
		\text{ for } \delta = \min\left\{\frac{8\sqrt{\mu_yL_y}}{\mu_{yx}},\sqrt{\frac{L_y}{L_x}}\right\},\\
		\frac{1}{\rho_d} &= \max\left\{
		\frac{512L_xL_y}{\mu_{xy}^2},
		\frac{512L_xL_y}{\mu_{yx}^2},
		\frac{128L_{xy}^2}{\mu_{xy}^2},
		\frac{128L_{xy}^2}{\mu_{yx}^2}\right\}
		\text{ for } \delta = \sqrt{\frac{L_y}{L_x}}.
	\end{align}
\end{proof}

\end{document}